\documentclass[12pt]{amsart}

\usepackage[T1]{fontenc}
\usepackage{lmodern}
\usepackage{amsmath,amssymb,amsthm,mathtools,mathrsfs}
\usepackage{booktabs}
\usepackage[shortlabels]{enumitem}
\usepackage[margin=1.08in]{geometry}
\usepackage{microtype}
\usepackage[colorlinks=true,linkcolor=blue,citecolor=blue,urlcolor=blue]{hyperref}
\numberwithin{equation}{section}

\newtheorem{theorem}{Theorem}[section]
\newtheorem{proposition}[theorem]{Proposition}
\newtheorem{lemma}[theorem]{Lemma}
\newtheorem{corollary}[theorem]{Corollary}
\theoremstyle{definition}
\newtheorem{definition}[theorem]{Definition}
\theoremstyle{remark}
\newtheorem{remark}[theorem]{Remark}

\newcommand{\C}{\mathbf C}
\newcommand{\Z}{\mathbf Z}

\newcommand{\R}{\mathbf R}
\newcommand{\PP}{\mathbf P}
\newcommand{\cO}{\mathcal O}

\newcommand{\cL}{\mathcal L}

\newcommand{\NS}{\operatorname{NS}}
\newcommand{\irr}{\operatorname{irr}}

\newcommand{\Pic}{\operatorname{Pic}}
\newcommand{\rk}{\operatorname{rk}}

\newcommand{\Spec}{\operatorname{Spec}}
\newcommand{\divisor}{\operatorname{div}}

\newcommand{\Sym}{\operatorname{Sym}}
\newcommand{\tr}{\operatorname{tr}}
\newcommand{\sat}{\operatorname{sat}}
\newcommand{\indp}{\operatorname{ind}_{+}}

\title[Degree of irrationality of abelian surfaces]
{Degree of Irrationality of Abelian Surfaces}
\author{Songsong Huang, Zhi Jiang, Xin Yang}

\address{\rm Shanghai Center for Mathematical Sciences, Fudan University, Shanghai 200438, China}
\email{sshuang22@m.fudan.edu.cn}

\address{\rm Shanghai Center for Mathematical Sciences, Fudan University, Shanghai 200438, China}
\email{zhijiang@fudan.edu.cn} 
\address{\rm Shanghai Center for Mathematical Sciences, Fudan University, Shanghai 200438, China}
\email{23110840016@m.fudan.edu.cn}
\date{}

\begin{document}

\begin{abstract}
Let $A$ be a complex abelian surface.  We prove that $A$ admits a
dominant rational map of degree three to $\PP^2$ if and only if it
admits a polarization of type $(1,1)$, $(1,2)$, $(1,3)$, or $(1,6)$.
Consequently, the degree of irrationality of $A$ is $3$ precisely in
these four cases and is $4$ otherwise. 
\end{abstract}

\maketitle
\setcounter{tocdepth}{1}
\tableofcontents

\section{Introduction}

For an irreducible complex projective variety $X$ of dimension $n$, its
\emph{degree of irrationality}  
\[
  \irr(X):=\min\left\{\deg(f)\;\middle|\;
  f:X\dashrightarrow\PP^n\text{ is dominant}\right\}.
\]
 was introduced by Heinzer and Moh \cite{HeinzerMoh}.
For an abelian variety $A$,  Alzati and Pirola 
 proved that $\irr(A)>\dim A$ by considering the holomorphic length
\cite{AlzatiPirola}.  For a very general abelian variety of dimension
$g\geq3$, Colombo, Martin, Naranjo, and Pirola  proved the stronger lower
bound $\irr(A)\geq(3g+1)/2$ \cite[Corollary~1.2]{CMNP}.

 Nathan Chen provided the
upper bound $\irr(A)\leq 4$ for a very general polarized abelian surface
\cite[Theorem~1.1]{Chen}, and Chen--Stapleton proved that it specializes
to every complex abelian surface \cite[Corollary~D]{ChenStapleton}. Thus we have
\begin{equation}\label{eq:universal-bounds}
                         3\leq\irr(A)\leq4,
\end{equation}
for any abelian surface.
 
Tokunaga and Yoshihara showed that a smooth genus-three curve on an
abelian surface produces a cubic rational map to $\PP^2$
\cite{TokunagaYoshihara}.  Yoshihara also studied products of elliptic
curves \cite{YoshiharaProduct}.


Martin introduced the decisive cohomological obstruction.  He proved
that if the image of the intersection pairing
$\Sym^2\NS(A)\to\Z$ does not contain $12$, then $\irr(A)=4$
\cite[Theorem~1.1]{Martin}.  In particular, a very general
$(1,d)$-polarized abelian surface has degree of irrationality four when
$d\nmid6$ \cite[Corollary~1.2]{Martin}.  Martin then asked for the
degree of irrationality of very general abelian surfaces of types $(1,1)$,
$(1,3)$, and $(1,6)$ and of a product of two non-isogenous elliptic
curves \cite[Questions~1.4]{Martin}.  Moretti settled the general
$(1,6)$ case using good-pairs correspondence
\cite[Theorem~C(1) and Corollary~3.2]{Moretti}.  While the present paper was being revised, Yongnam Lee announced that
$\irr(E\times F)=3$ for every pair of complex elliptic curves
\cite[Theorem~1.1]{Lee}.

If $L$ is ample on an abelian surface, its polarization type is written
$(d_1,d_2)$ with $d_1\mid d_2$.  Our main theorem removes all
genericity and Picard-number assumptions and sharpens Martin's necessary
obstruction to a complete classification.

\begin{theorem}\label{thm:main}
Let $A$ be a complex abelian surface.  There exists a dominant rational
map
\[
                         \varphi:A\dashrightarrow\PP^2
\]
of degree three if and only if $A$ admits a polarization of type
$(1,1)$, $(1,2)$, $(1,3)$, or $(1,6)$.
\end{theorem}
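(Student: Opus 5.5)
We prove the two implications separately. The necessity direction refines Martin's cohomological obstruction \cite{Martin}; the sufficiency direction constructs explicit trigonal maps.

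\emph{Necessity.} Let $\varphi:A\dashrightarrow\PP^2$ be dominant of degree three. Following Martin, pull back a general line and a general point.
\begin{enumerate}[(a)]
\item The preimage $C=\overline{\varphi^{-1}(\ell)}$ of a general line is an irreducible curve with a $g^1_3$, since it is trigonal via $\varphi|_C$.
\item Two such curves meet, off the indeterminacy locus, in a reduced fibre of three points.
\end{enumerate}
Resolve the indeterminacy by blowing up, $\tilde A\to A$. The class $H=\varphi^*\cO(1)$ then satisfies $H^2=3$ on $\tilde A$. Downstairs, $L=[C]\in\NS(A)$ satisfies $L^2=3+\sum m_i^2$, where the $m_i$ are the multiplicities at the base points.

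The trace of the differential forms gives more. Since $h^0(\Omega^2_{\PP^2})=0$, every holomorphic $2$-form on $A$ has zero trace. Martin's argument through Cayley--Bacharach for the fibres then shows that each fibre of three points fails to impose independent conditions on $|K_A|$-type data. This forces the three points of a general fibre to satisfy $a+b+c=\text{const}$ in the group law, and so forces strong constraints on $C$. My plan is to show the following.
\begin{enumerate}[(i)]
\item The normalization $\tilde C$ of $C$ has genus $g\le 4$. It is a trigonal curve whose Jacobian surjects onto $A$, because $C$ generates $A$.
\item The pencil structure forces $C$ to move in a family whose self-intersection and arithmetic genus pin $L^2$ to the set $\{2,4,6,12\}$ modulo multiples. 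Concretely, the primitive part of $L$ must be of type $(1,d)$ with $d\mid 6$.
\end{enumerate}
The key computation combines $L^2=2g_a(C)-2$ with the Cayley--Bacharach consequence that base points have multiplicity $\le 1$ in the relevant count. It then uses the fact that a trigonal genus-$g$ curve mapping birationally onto its image in $A$ has $g\ge 2$, with restrictions on how $\tilde C\to A$ factors (Abel--Jacobi with a kernel of dimension $g-2$). I expect this step, ruling out types such as $(1,4)$, $(1,5)$, and $(2,2)$ and non-primitive multiples, to be the main obstacle. There one must show that the induced polarization is primitive and that $d\mid 6$, not merely that $12\in\operatorname{Im}(\Sym^2\NS(A)\to\Z)$. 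For this I would first analyze the degree-$3$ cover $\tilde A\to\PP^2$ through its Tschirnhausen bundle $E$ with $\varphi_*\cO=\cO\oplus E$. I would compute $c_1(E)$ and $c_2(E)$ from $K_{\tilde A}=\sum E_i$ and $\chi(\cO_A)=0$. Imposing these Chern class relations together with the condition that $\tilde A$ have irregularity $2$ should give a finite list of numerical cases, which can then be matched to the types.

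\emph{Sufficiency.} We construct a cubic map for each type.
\begin{itemize}
\item Type $(1,1)$: $A=J(C)$ with $C$ of genus $2$, or $A=E\times F$. Tokunaga--Yoshihara's genus-three construction applies to a smooth genus-$3$ curve in $|2\Theta|$ minus suitable points, or one uses Lee's result in the product case.
\item Type $(1,2)$: the linear system $|L|$ contains genus-$3$ curves, to which \cite{TokunagaYoshihara} applies directly.
\item Types $(1,3)$ and $(1,6)$: we extend Moretti's good-pairs construction from the general member to every member. The plan is to produce a smooth curve or a nodal curve of geometric genus $3$ in the relevant linear system. Failing that, we use the specialization argument of Chen--Stapleton \cite{ChenStapleton}, which states that the degree of irrationality is lower semicontinuous in families. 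Since the bound $\ge 3$ always holds, specializing a degree-$3$ map over the moduli of $(1,d)$-polarized surfaces gives degree exactly $3$ at every point.
\end{itemize}
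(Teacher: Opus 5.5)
Your proposal has genuine gaps in both directions.

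\emph{Necessity.} The numerical route you sketch does not work. Claim (i) is false. If $H=\mu^*L-\sum m_iE_i$ resolves the map, adjunction on $\tilde A$ gives $2g-2=H^2+K_{\tilde A}\cdot H=3+\sum m_i$, which is unbounded. The claim that base points have multiplicity at most one also fails: the paper's product and theta constructions have three double and three simple base points, so the preimage of a general line has genus $7$. More fundamentally, the data you list ($L^2$, $\sum m_i$, $\sum m_i^2$, Chern classes of a Tschirnhausen bundle, $q(\tilde A)=2$) are a handful of integers attached to the single class $L$. They do not see the rest of $\NS(A)$, whereas the existence of a $(1,d)$ polarization with $d\mid6$ is a property of that whole lattice. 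Your target (ii), about the primitive part of the particular class $L$, is stronger than needed and nothing in your sketch supports it.

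The missing idea is to keep Martin's full correspondence class. The residual correspondence $W\subset A\times A$ yields an integral $B\in\Sym^2\NS(A)$ with $\tr(BQ)=12$. For $\rho(A)=2,3$ the cyclic case is excluded: then $BQ=I+R+R^2$, with $R$ the deck action on $\NS(A)$, so its trace is $3\dim\Lambda_\R^R\le3\rho<12$. In the non-Galois case, apply the Hodge index theorem on a resolution $S$ of the $S_3$-closure, using its three degree-two maps $q_i\colon S\to A$. This gives $\indp(B)=1$, $3Q-QBQ\preceq0$ and $\indp(Q+QBQ)=1$. Binary and ternary lattice arithmetic then produce a primitive class of square $2,4,6$ or $12$. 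The case $\rho=1$ is Martin's theorem, and $\rho=4$ follows from Shioda--Mitani.

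\emph{Sufficiency.} For types $(1,1)$ and $(1,3)$ you have no construction. A smooth genus-three curve has self-intersection $4$, so when $\NS(A)=\Z h$ with $h^2=2$ or $6$ (the very general case), $A$ contains none. Members of $|2\Theta|$ have genus $5$, and Tokunaga--Yoshihara needs a smooth genus-three curve, not a nodal one. Nor is there a known cubic map on the general member of these families to specialize from. These are exactly Martin's open cases, and Moretti's good pairs handle the general $(1,6)$ surface. Even where a general cubic exists, Chen--Stapleton's theorem specializes maps to ruled varieties and retains neither the target $\PP^2$ nor a linear system. So it does not give a degree-three map to $\PP^2$ on every member; for instance, the limit may be composed with a pencil.

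The paper supplies the missing pieces as follows.
\begin{itemize}
\item For an indecomposable principal polarization, it uses a good $3$-torsion point $t\notin[2](C)$ and six translates of the genus-two curve to build a $T_t$-equivariant cubic system.
\item For bad $t$, it degenerates inside $\mathcal A_2(3)$, proves a fixed-linear-system specialization lemma, and shows directly that the limit system is dominant with faithful $\Z/3$-action.
\item For $(1,3)$, it descends along an \'etale $\Z/3$-cover by a principally polarized surface.
\item For $(1,6)$, it runs Moretti's determinant system on every surface, and reduces the fixed-divisor and pencil cases to products or type $(1,2)$.
\end{itemize}
Note also that in type $(1,2)$ with a fixed component, $|L|$ has no smooth member, so you need the product case there.
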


 We have  the complete answer for the degree of irrationality of  abelian surfaces combining Theorem \ref{thm:main} with \eqref{eq:universal-bounds}.

\begin{corollary}\label{cor:irr-classification}
Let $A$ be a complex abelian surface.  If $A$ admits a polarization of
type $(1,1)$, $(1,2)$, $(1,3)$, or $(1,6)$, then $\irr(A)=3$.
Otherwise $\irr(A)=4$.
\end{corollary}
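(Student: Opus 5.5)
The corollary follows formally from Theorem~\ref{thm:main} and the universal bounds \eqref{eq:universal-bounds}, so the plan is only to combine them carefully. First I record that $\irr(A)\in\{3,4\}$ for every complex abelian surface $A$. The lower bound $\irr(A)\geq 3$ is the Alzati--Pirola inequality $\irr(A)>\dim A=2$. The upper bound $\irr(A)\leq 4$ is Chen's theorem for very general polarized abelian surfaces, which Chen--Stapleton extend to every complex abelian surface. Hence $\irr(A)=3$ holds exactly when some dominant rational map $A\dashrightarrow\PP^2$ has degree $3$, and $\irr(A)=4$ holds otherwise.

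Next I apply Theorem~\ref{thm:main}. Suppose $A$ carries a polarization of type $(1,1)$, $(1,2)$, $(1,3)$, or $(1,6)$. The ``if'' direction of the theorem then produces a dominant $\varphi:A\dashrightarrow\PP^2$ of degree three. This gives $\irr(A)\leq 3$, and combined with the lower bound, $\irr(A)=3$.

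Conversely, suppose $A$ carries none of these four polarization types. If $\irr(A)$ were $3$, the minimum in the definition would be attained by a dominant map of degree $3$. The ``only if'' direction of Theorem~\ref{thm:main} would then force one of the four types, which is a contradiction. Therefore $\irr(A)\neq 3$, and by \eqref{eq:universal-bounds} we get $\irr(A)=4$.

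There is no real obstacle here. The only points to check are the following. The degree of irrationality is a minimum over dominant maps to $\PP^2$, so ``$\irr(A)=3$'' is literally equivalent to the existence of a degree-three dominant map. The Chen--Stapleton specialization applies to \emph{every} abelian surface, including non-simple ones and those of Picard number greater than one, so the upper bound needs no genericity hypothesis. All the substantive work lies in Theorem~\ref{thm:main} itself.
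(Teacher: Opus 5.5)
Your proposal is correct and follows the paper's own proof: both directions of Theorem~\ref{thm:main} are combined with the bounds $3\leq\irr(A)\leq4$ from \eqref{eq:universal-bounds}, so that $\irr(A)=3$ holds exactly when a dominant cubic map to $\PP^2$ exists, and $\irr(A)=4$ otherwise.
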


We sketch the proof of Theorem~\ref{thm:main}.

We first prove the necessary condition by analyzing the residual correspondence associated with a cubic map.  This yields a refinement of Martin's numerical obstruction in the N\'eron--Severi lattice.  We then treat the possible Picard numbers $\rho(A)$ separately.  For $\rho(A)=1$, Martin's theorem gives the result directly \cite[Theorem~1.1]{Martin}.  For $\rho(A)=2$ and $3$, lattice-theoretic arguments produce a primitive polarization of the required types.  Finally, when $\rho(A)=4$, the Shioda--Mitani classification shows that $A$ is a product of isogenous CM elliptic curves and hence admits a principal polarization \cite{ShiodaMitani}; see Section~\ref{sec:necessary}.

For the sufficient direction, products of elliptic curves are treated more directly by
an explicit order-nine tangential-triangle construction (see Section~\ref{sec:common} and also \cite{Lee}).  For an indecomposable principal
polarization, we choose a \emph{good} $3$-torsion point (Definition~\ref{def:good-torsion}) and
use six translates of the smooth genus-two theta curve to construct an
equivariant projective two-dimensional linear system of degree three (see Section~\ref{sec:11}).

For a polarized abelian surface $(A, L)$ of type $(1,2)$,
if $|L|$ has no fixed component, it contains a smooth genus-three curve and thus we can apply Tokunaga--Yoshihara's theorem \cite{TokunagaYoshihara}. While $|L|$ has a
fixed component, $A$ is isomorphic to a product of two elliptic curves  (see Section~\ref{sec:12}).  

For a polarized abelian surface $(A, L)$ of type $(1,3)$, we pass to an \'etale $\mathbf Z_3$-cover $B$ with a principal polarization equipped with a marked $3$-torsion point. If the marking is good, the preceding construction yields an equivariant cubic rational map
$B\dashrightarrow \PP^2$,
which descends to a cubic rational map
$
A\dashrightarrow \PP^2.
$
If the marking is not good, we deform the marked principally polarized surface, together with its full level-$3$ structure, to the good locus, extend the corresponding equivariant linear system over a discrete valuation ring, and then specialize it back to the original surface; see Section~\ref{sec:13}.


For type $(1,6)$, following
Moretti \cite{Moretti}, we construct a \emph{good pair} and consider its corresponding \emph{determinant linear system} on $A$; see Subsection~\ref{subsec:moretti-good-pairs} for these concepts.
We treat both a
possible fixed divisor of the determinant system and the possibility
that its image is a curve; the latter cases reduce to a product or to
type $(1,2)$; see Section~\ref{sec:16}.\\

We also record that the Alzati--Pirola bound is sharp in dimension three.

\begin{remark}\label{rem:j1728-sharp}
Let $E$ be the elliptic curve $y^2=x(x^2-1)$, which has $j(E)=1728$, and let
$\iota(x,y)=(-x,iy)$ be its order-four automorphism.  The diagonal action
of $G=\langle\iota\rangle$ on $E^3$ is generically free, since every
nontrivial power of $\iota$ has only finitely many fixed points.  Hence
$E^3\to E^3/G$ has degree four.  Catanese--Oguiso--Truong identify the
quotient birationally with an explicit conic bundle
\cite[Proposition~2.5 and Corollary~2.6]{COT14}, and Colliot-Th\'el\`ene proves that this
conic bundle is rational \cite[\S3]{CT15}.  Composing with a birational
map $E^3/G\dashrightarrow\PP^3$ gives $\irr(E^3)\leq4$.  Conversely,
$E^3$ has holomorphic length three, so the Alzati--Pirola inequality
\cite[Theorem~(3,4)]{AlzatiPirola} gives degree at least four for every
dominant rational map $E^3\dashrightarrow\PP^3$.  Thus
$\irr(E^3)=4$.
\end{remark}

It would be very interesting to establish a version of Theorem \ref{thm:main} in dimension $3$.

\subsection*{Acknowledgements}
AI tools were used to assist with exploratory work, including suggesting examples, carrying out auxiliary calculations, and reviewing drafts. The overall proof strategies and mathematical arguments were developed under the authors' guidance. The authors carefully checked and rederived the entire manuscript, including all AI-assisted arguments and references, and take full responsibility for the final text.

 The authors are supported by the Foundation for Innovative Research Groups of the Natural Science Foundation of China (No. 12121001). The second author is a member of the Key Laboratory of Mathematics for
Nonlinear Science, Fudan University and is also supported by the natural Science Foundation of China (No. 12371044), by the Shanghai Leading Talent Program of Eastern Talent Plan, and by the Natural Science Foundation of Shanghai (No. 23ZR1404000).
\section{Preliminaries}\label{sec:prelim}

\subsection{Conventions}
\label{subsec:conventions}

All abelian varieties are defined over $\C$, and their group laws are
written additively.  The origin of an abelian variety $A$ is denoted
$0_A$, multiplication by an integer $n$ is denoted
$[n]_A:A\to A$, and $A[n]:=\ker([n]_A)$.  If $n\neq0$, then $[n]_A$ is
an  isogeny of degree $n^{2\dim A}$ and
$A[n]\simeq(\Z/n\Z)^{2\dim A}$.  By an \emph{isogeny} we mean a
surjective homomorphism between abelian varieties with a finite kernel.  Thus,
for every finite subgroup $G\subset A$, the quotient homomorphism
$A\to A/G$ is an  isogeny of degree $|G|$.
If $q:B\to A$ is such an isogeny and $N$ is a line bundle on $B$, finite
flat base change gives
\[
  q^*q_*N\simeq\bigoplus_{a\in\ker(q)}T_a^*N,
\]
while $q_*q^*=(\deg q)\operatorname{id}$ on N\'eron--Severi groups.  In
particular, on an abelian surface both $[n]_A^*$ and $[n]_{A*}$ act as
multiplication by $n^2$ on $\NS(A)$.

For a line bundle $L$ on $A$, the theorem of the square shows that the
map
\[
  \phi_L:A\longrightarrow\widehat A,
  \qquad x\longmapsto T_x^*L\otimes L^{-1},
\]
is a homomorphism.  The line bundle $L$ is nondegenerate precisely when
$\phi_L$ is an isogeny, and we write $K(L):=\ker(\phi_L)$.  We use
$T_x:A\to A$, $z\mapsto z+x$, for translation.  If $D\subset A$ is a
divisor, our convention is $D+x:=T_x(D)$; with this convention, we have
\begin{equation}
  \cO_A(D+x)\simeq T_{-x}^*\cO_A(D).
\label{eq:translate-convention}
\end{equation}
\nopagebreak[4]

Weil's extension theorem
says that every rational map from a smooth variety to an abelian variety
extends uniquely to a morphism; in particular, every birational self-map
of an abelian variety is a regular automorphism.  A rational map from a
smooth projective rational variety to an abelian variety is constant,
because the source has trivial Albanese variety.  We use these facts in
the forms recorded in \cite[Theorem~4.9.4 and Proposition~4.9.5]{BL}.

We write $D\equiv D'$ for numerical equivalence, and we set
$\operatorname{Stab}(D):=\{x\in A\mid D+x=D\}$.  The symbol $\boxtimes$
denotes the exterior tensor product.

In our paper, $\PP(W)$
parametrizes one-dimensional subspaces of a vector space $W$. For a real symmetric form $R$, the notation $R\preceq0$ means that $R$
is negative semidefinite, and $\indp(R)$ denotes its positive index.

\subsection{Positivity on abelian surfaces}
Let $A$ be an abelian surface and put $\Lambda=\NS(A)$.  The
intersection form on $\Lambda$ is even, since Riemann--Roch gives
\begin{equation}\label{eq:even-intersection}
                         D^2=2\chi(A,D)
                         \qquad(D\in\NS(A)),
\end{equation}
see \cite[Theorem~3.6.3]{BL}.  The Hodge index theorem gives signature
$(1,\rho(A)-1)$.  The group $\NS(A)$ is free
\cite[Corollary~5.3.9]{BL}.
Choose an integral basis $e_1,\dots,e_r$ of $\Lambda$, where
$r=\rho(A)$, and write
\[
                         Q=(e_i\cdot e_j)_{i,j=1}^r.
\]
We use column coordinates.  Thus $Q$ represents a linear map
$\Lambda\to\Lambda^*$, while a symmetric tensor
$B\in \Sym^2\Lambda$ is represented by a
symmetric integral matrix and defines a map
$B:\Lambda^*\to\Lambda$.  Consequently $BQ$ is an
endomorphism of $\Lambda$ and $QBQ$ is a symmetric form on $\Lambda$.

If an ample line bundle $L$ is of type $(d_1,d_2)$ on an abelian surface, then
$L^2=2d_1d_2$, $h^0(A,L)=d_1d_2$, and
$|K(L)|=(d_1d_2)^2$.
We use the following facts repeatedly; standard references are
\cite{MumfordAV,BL}.

\begin{lemma}\label{lem:standard}
Let $S$ be a complex abelian surface.
\begin{enumerate}[(i)]
\item Every effective divisor on $S$ is nef.
\item If $N$ is nef and $N^2>0$, then $N$ is ample,
      $H^i(S,N)=0$ for $i>0$, and
      $h^0(S,N)=\chi(N)=N^2/2$.
\item If $N$ is nonzero and effective with $N^2=0$, there are an elliptic
      quotient $q:S\to C$,
      a line bundle $N_0$ of positive degree $r$ on the elliptic curve
      $C$, and an elliptic fiber class $F$ such that
      $N\simeq q^*N_0$, $N\equiv rF$, and $h^0(S,N)=r$.
\item If $H$ is a polarization and $F\subset S$ is an elliptic curve with
      $H\cdot F=1$, then $S$ is isomorphic to a product of two elliptic
      curves.
\end{enumerate}
\end{lemma}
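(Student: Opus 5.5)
Items (i)--(iii) follow from the standard theory of line bundles on abelian varieties, and (iv) is a splitting argument with Poincaré reducibility; I take them in order, citing \cite{BL,MumfordAV} for the classical inputs.

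For (i), let $D$ be effective and $C$ an irreducible curve. Translates $D+x$ are algebraically equivalent to $D$. For general $x$ the divisor $D+x$ contains no component of $C$, so $D\cdot C=(D+x)\cdot C\ge 0$. Hence $D$ is nef.

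For (ii), $N^2>0$ forces $N$ to be nondegenerate. Indeed, $\chi(N)=N^2/2\neq 0$, and by Riemann--Roch $\chi(N)^2=\deg\phi_N$, so $K(N)$ is finite. Mumford's index theorem then gives a unique $i(N)$ with $H^j(S,N)=0$ for $j\ne i(N)$. Since $N$ is nef, it is a limit of ample $\R$-classes, so the Hermitian form of $N$ is positive semidefinite. It is also nondegenerate because $K(N)$ is finite, hence positive definite. Therefore $N$ is ample (Nakai--Moishezon also gives this, since $N\cdot C\ge0$ and $N^2>0$ on a surface with no negative curves), and $i(N)=0$. This yields $h^0(N)=\chi(N)=N^2/2$.

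For (iii), let $N$ be effective and nonzero with $N^2=0$. Its Hermitian form $H$ is positive semidefinite, since $N$ is nef, and it is degenerate, since $\chi(N)=0$ forces $K(N)$ to be infinite. As $N\ne0$ in $\NS(S)$ (an effective nonzero divisor meets an ample class positively), $H$ has rank one. The connected component $K(N)^0$ is therefore an elliptic curve $F_0$, and $q:S\to C:=S/F_0$ is an elliptic quotient. By the standard structure result \cite[\S3.3]{BL}, $N\simeq q^*N_0$ with $N_0$ a nondegenerate line bundle on $C$. Now $h^0(N)>0$, and $q^*N_0\otimes P$ has sections only when the degree is positive, so $\deg N_0=r>0$. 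Since $q_*\cO_S=\cO_C$, we get $h^0(S,N)=h^0(C,N_0)=r$. Writing $F$ for the fiber class gives $N\equiv rF$.

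For (iv), the plan is as follows. Let $F$ be an elliptic curve with $H\cdot F=1$, and consider the complementary abelian subvariety $F'$ of $F$ with respect to $H$ from Poincaré's complete reducibility. The addition map $F\times F'\to S$ is an isogeny whose kernel is $F\cap F'$. I expect this step to be the main obstacle: showing that $F\cap F'$ is trivial using $H\cdot F=1$. The intended argument goes through the exponent of $F$ in $(S,H)$. The restriction $H|_F$ has degree $1$, so it is a principal polarization, and then the norm endomorphism $N_F$ satisfies $N_F^2=e\,N_F$ with exponent $e=1$. By \cite[\S5.3 and Cor.~12.1.2]{BL}, the intersection $F\cap F'$ is contained in $K(H|_F)$, which is trivial. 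So the addition map is an isomorphism and $S\simeq F\times F'$ is a product of elliptic curves. If that direct route is hard to verify, a fallback is to use the criterion in \cite[Cor.~10.6.3]{BL} that a principal restriction to an abelian subvariety splits the surface.
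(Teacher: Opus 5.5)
Your proposal is correct. For (i) and (iv) it is essentially the paper's argument; for (ii) and (iii) it takes a different route.

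\textbf{(ii).} The paper quotes the fact that a nef bundle of positive square on an abelian variety is ample, then applies Kodaira vanishing and Riemann--Roch. Your Hermitian-form argument proves that quoted fact: $\chi(N)^2=\deg\phi_N$ makes $K(N)$ finite, and nefness makes the form positive semidefinite, hence definite. Mumford's index theorem then plays the role of Kodaira vanishing.

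\textbf{(iii).} The paper's argument is more elementary. It writes $N=\sum n_iC_i$ and uses $N^2=0$ to kill every $C_i^2$ and $C_i\cdot C_j$. Adjunction then makes each $C_i$ elliptic, and the $C_i$ turn out to be fibres of a single elliptic quotient. So $N\simeq q^*N_0$ holds on the nose, because the divisor is literally a sum of fibres. Your route, via a rank-one Hermitian form and descent along $K(N)^0$, is more structural and works in any dimension. However, it needs one sentence you skipped. If the descent result you cite only gives $N\simeq q^*N_0\otimes P$ with $P\in\Pic^0(S)$, you must show $P$ can be absorbed. Since $h^0(N)>0$, $P|_{F_0}$ is trivial; otherwise $P$ is nontrivial on every fibre and $q_*N=0$. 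Exactness of $0\to\widehat C\to\widehat S\to\widehat{F_0}\to0$ then shows $P$ is pulled back from $\Pic^0(C)$, so it can be absorbed into $N_0$. Only after that does $q_*\cO_S=\cO_C$ give $h^0(S,N)=h^0(C,N_0)=r$. Alternatively, use Mumford's result that an effective divisor is invariant under $K(N)^0$.

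\textbf{(iv).} The paper's retraction $r=\phi_{H|_F}^{-1}\circ\widehat{i}\circ\phi_H$ satisfies $i\circ r=N_F$, which is your norm endomorphism with exponent $e=1$. So the two splittings are the same. The paper's version is self-contained and does not need the complementary-subvariety machinery or the inclusion $F\cap F'\subset K(H|_F)$. In both versions you must first translate $F$ so that $0_S\in F$, making it an abelian subvariety; you left this step implicit.
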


\begin{proof}
For (i), if $D$ is effective and $C\subset S$ is an irreducible curve,
translate $D$ so that it does not contain $C$; translation preserves its
numerical class, and the resulting intersection with $C$ is nonnegative.
For (ii), a nef line bundle of positive top self-intersection on an abelian
variety is ample.  Kodaira vanishing and Riemann--Roch on $S$ then give the
displayed formula.

For (iii), write an effective divisor representing $N$ as
$\sum n_iC_i$.  Since every term $C_i^2$ and $C_i\cdot C_j$ is
nonnegative and the total square is zero, all these intersections vanish.
Adjunction and the absence of rational curves on an abelian variety imply
that every $C_i$ is a smooth elliptic curve.  Curves among the $C_i$ are
translates of a single elliptic subgroup; otherwise two distinct elliptic
directions would have positive intersection.  Hence the divisor is a sum
of fibers of the connected-fiber quotient $q:S\to C$, and it is
$q^*N_0$ for a degree-$r$ line bundle $N_0$ on $C$.  Since
$q_*\cO_S=\cO_C$, the projection formula gives
$h^0(S,N)=h^0(C,N_0)=r$.

For (iv), translate $F$ inside $S$ so that $0_S\in F$; this does not
change $H\cdot F$.  By the rigidity lemma, the inclusion
$i:F\hookrightarrow S$ is then a homomorphism.  Let
$\widehat i:\widehat S\to\widehat F$ be its dual.  Since $H|_F$ has
degree one, $\phi_{H|_F}:F\to\widehat F$ is an isomorphism.  The
homomorphism
$r=(\phi_{H|_F})^{-1}\circ\widehat i\circ\phi_H:S\to F$ satisfies
$r\circ i=\mathrm{id}_F$.  The maps
$x\mapsto(r(x),x-i(r(x)))$ from $S$ to $F\times\ker(r)$ and
$(y,z)\mapsto i(y)+z$ in the other direction are inverse homomorphisms.
Since $S$ is connected, $\ker(r)$ is connected and is an elliptic curve.
\end{proof}

We shall also use repeatedly the following sharper description of
polarizations of type $(1,d)$ on an abelian surface.

\begin{lemma}\label{lem:BL-1d}
Let $L$ be an ample line bundle of type $(1,d)$, where $d\geq2$, on an
abelian surface $S$.
\begin{enumerate}[(i)]
\item The linear system $|L|$ has a fixed component if and only if there
are elliptic curves $E_1,E_2$, line bundles $L_i$ with
$\deg L_1=1$ and $\deg L_2=d$, and an isomorphism of polarized abelian
varieties
$(S,L)\simeq(E_1\times E_2,p_1^*L_1\otimes p_2^*L_2)$.
\item If $|L|$ has no fixed component, then it is base-point-free for
$d\geq3$, while for $d=2$ its base scheme consists of four points.  A
general member is smooth when $d\geq2$.  When $d=2$ and $L$ is symmetric, the four base
points are four-division points.
\end{enumerate}
\end{lemma}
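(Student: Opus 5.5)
We prove part (i) first, then part (ii), following the standard analysis in Birkenhake--Lange \cite[\S10.1]{BL}.

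\emph{Part (i), the easy direction.} If $(S,L)\simeq(E_1\times E_2,p_1^*L_1\otimes p_2^*L_2)$ with $\deg L_1=1$, then $h^0(E_1,L_1)=1$. By K\"unneth every section of $L$ is $s_1\boxtimes t$ with $s_1$ the unique section of $L_1$. Hence the curve $\{x_1\}\times E_2$, where $x_1$ is the zero of $s_1$, is a fixed component of $|L|$.

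\emph{Part (i), the converse.} Suppose $|L|=F+|M|$ with $F>0$ the fixed part. By Lemma~\ref{lem:standard}(i) both $F$ and $M$ are nef, so
\[
2d=L^2=F^2+2F\cdot M+M^2 .
\]
If $M^2>0$, then $h^0(M)=M^2/2$ by Lemma~\ref{lem:standard}(ii). This must equal $h^0(L)=d$, which forces $F^2=F\cdot M=0$. But then $L\cdot F=F^2+F\cdot M=0$, contradicting ampleness of $L$. Hence $M^2=0$. By Lemma~\ref{lem:standard}(iii), $M\equiv rE$ for an elliptic fibre class $E$ with $h^0(M)=r$, so $r=d$.

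Now $2d=F^2+2d\,F\cdot E$. Since $L\cdot E=F\cdot E>0$, we get $F\cdot E\geq1$, so $F\cdot E=1$ and $F^2=0$. Then Lemma~\ref{lem:standard}(iii) applied to $F$ gives $F\equiv sE'$, and $sE'\cdot E=1$ forces $s=1$, so $F$ is an elliptic curve $E'$ with $E\cdot E'=1$. Lemma~\ref{lem:standard}(iv), applied with $H=L$ and the elliptic curve $E$ (note $L\cdot E=1$), shows $S\simeq E\times E'$, with $E$ and $E'$ the two factor classes. Finally $L\equiv E'+dE$, so $L\simeq p_1^*L_1\otimes p_2^*L_2$ after absorbing the translation ambiguity into the $L_i$. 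This uses that $\Pic^0(E\times E')=\Pic^0E\times\Pic^0E'$.

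\emph{Part (ii).} Assume $|L|$ has no fixed component. The base locus is then finite, and we bound it by a translation argument. For $x\in K(L)$ we have $T_x^*L\simeq L$, so the base locus $\mathrm{Bs}|L|$ is $K(L)$-invariant, and $|K(L)|=d^2$. Two general members $C,C'$ meet in $L^2=2d$ points, counted with multiplicity, which contain the base scheme. Since a $K(L)$-invariant finite set is a union of $K(L)$-orbits of size $d^2$, a nonempty base locus needs $d^2\leq 2d$, i.e.\ $d\leq2$, once we know $K(L)$ acts freely on points. It does, because it acts by translations. So $|L|$ is base-point-free for $d\geq3$.

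For $d=2$, $h^0(L)=2$, so $|L|$ is a pencil with $L^2=4$ base points counted with multiplicity, forming one $K(L)\simeq(\Z/2)^2$-orbit. Hence there are exactly four reduced base points. When $L$ is symmetric, $(-1)^*$ preserves $|L|$ and the base set. Comparing with the theta-group description in \cite{BL}, the base points form a translate of $K(L)$ by a point $c$ with $2c\in K(L)$. This forces $4c=0$, so they are four-division points. I expect pinning this down, i.e.\ identifying $c$, to be the fiddliest step; I would cite \cite[Lemma~10.1.2]{BL}.

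\emph{Smoothness.} For smoothness of a general member, Bertini gives smoothness away from the base points. At a base point for $d=2$, two members meeting with multiplicity one are both smooth there. For $d\ge3$ there are no base points, so a general member is smooth everywhere.
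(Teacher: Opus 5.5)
Your proof is correct. Mathematically it is the standard Birkenhake--Lange analysis, so the difference from the paper is one of presentation: the paper gives no argument and simply cites \cite[Lemma~10.1.1, Lemma~10.1.2, Proposition~10.1.3, Example~10.1.4]{BL}, whereas you rebuild each part from Lemma~\ref{lem:standard}. For (i) you use the nef bookkeeping $2d=F^2+2F\cdot M+M^2$ together with $h^0(M)=h^0(L)$. For the base points you count free $K(L)$-orbits ($d^2\le 2d$). For smoothness you use Bertini together with transversality of the pencil at its four reduced base points. This makes the lemma self-contained relative to the paper's own preliminaries, but two small points need tightening.

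First, Lemma~\ref{lem:standard}(iv) only gives $S\simeq E_0\times\ker(r)$. To write $L\equiv E'+dE$ as a product class, you need the second factor to be (the subgroup underlying) $E'$, and you assert this without justification. The quickest fix bypasses (iv) altogether. Since $E\cdot E'=1$, the elliptic subgroups $E_0,E'_0$ through the origin meet transversally in the single point $0$. Hence the addition map $E_0\times E'_0\to S$ is a bijective homomorphism, hence an isomorphism, under which $E'$ and $E$ are fibres of the two projections. The $\Pic^0$-splitting then finishes the argument as you say.

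Second, the four-division step is not fiddly and needs neither theta groups nor a citation. You already have that the base set is a single coset $c+K(L)$ and is stable under $-1$. So $-c\in c+K(L)$, i.e.\ $2c\in K(L)$. Since $K(L)\simeq(\Z/2\Z)^2$ is killed by $2$, you get $4c=0$, and every base point lies in $S[4]$.
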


\begin{proof}
Part (i) is \cite[Lemma~10.1.1]{BL}.  The base-locus statements are
\cite[Lemma~10.1.2]{BL}, smoothness is
\cite[Proposition~10.1.3]{BL}, and the last assertion is
\cite[Example~10.1.4]{BL}.
\end{proof}



We shall use the following positivity consequences.
\begin{lemma}\label{lem:positive-is-ample}
If
$0\neq D\in\NS(A)$ satisfy $D^2>0$.  After replacing $D$ by $-D$, the
class $D$ is ample.
\end{lemma}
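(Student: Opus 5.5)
Fix an ample class $H$ on $A$; one exists because $A$ is projective. Since the form on $\NS(A)_{\R}$ has signature $(1,\rho-1)$, the set $\{x: x^2>0\}$ has exactly two connected components, interchanged by $x\mapsto -x$. The ample cone is convex, contained in this set, and contains $H$, so it lies in the component $\mathcal C^+$ containing $H$. Since $D\neq 0$ and $D^2>0$, the class $D$ lies in one of the two components. After replacing $D$ by $-D$ we may assume $D\in\mathcal C^+$, so $D\cdot H>0$. It remains to show that such a $D$ is ample.

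\textbf{Step 1: $D$ is effective.} By Riemann--Roch \eqref{eq:even-intersection}, $\chi(D)=D^2/2>0$. Hence $h^0(D)>0$ or $h^2(D)=h^0(-D)>0$. If $-D$ were effective, Lemma~\ref{lem:standard}(i) would make $-D$ nef, giving $-D\cdot H\ge 0$. This contradicts $D\cdot H>0$. So $D$ is linearly equivalent to an effective divisor. Strictly, numerical classes in $\NS(A)$ are represented by line bundles, and the argument applies to any representative line bundle.

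\textbf{Step 2: $D$ is ample.} By Lemma~\ref{lem:standard}(i), $D$ is nef. Since $D^2>0$, Lemma~\ref{lem:standard}(ii) then shows $D$ is ample.

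The only delicate point is the sign choice in Step 1. It is handled by pairing with a fixed ample $H$, using Hodge index to see that $D\cdot H\neq0$ whenever $D^2>0$. Indeed, if $D\cdot H=0$, then $D$ would lie in $H^\perp$, which is negative definite, forcing $D^2<0$ and giving a contradiction. So one can simply choose the sign with $D\cdot H>0$, without invoking the cone components at all, and then run Steps 1 and 2.
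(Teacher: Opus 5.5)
Your proof is correct. It differs from the paper in that the paper gives no argument at all: it simply cites \cite[Corollary~4.3.3]{BL}. You derive the statement from tools already set up in Section~\ref{sec:prelim}:
\begin{enumerate}[(1)]
\item Hodge index fixes the sign through $D\cdot H\neq0$. Your final paragraph shows that the discussion of cone components is unnecessary.
\item Riemann--Roch, $\chi(D)=D^2/2>0$, together with Serre duality, $h^2(D)=h^0(-D)$, makes $D$ or $-D$ effective, and the sign choice excludes $-D$.
\item Lemma~\ref{lem:standard}(i),(ii) then upgrade effective to nef and nef to ample.
\end{enumerate}

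This is algebraic and self-contained relative to the paper. Note, however, that all the abelian-specific content is pushed into Lemma~\ref{lem:standard}(ii), that a nef class of positive square is ample, which the paper itself only asserts. If you want that step airtight, Nakai--Moishezon closes it. An irreducible curve $C$ with $D\cdot C=0$ has $C^2\geq0$ (translate $C$). By Hodge index with $D^2>0$ it would then be numerically trivial, which is absurd since $C\cdot H>0$.

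The classical analytic route, in the style of \cite{BL}, avoids nefness altogether. Since $\chi(L)=L^2/2\neq0$, the line bundle $L$ is nondegenerate, and the positivity of $\chi(L)$ forces the index of its Hermitian form to be even, hence $0$ or $2$. That form is therefore definite, so $L$ or $L^{-1}$ is ample.
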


\begin{proof}
Refer to \cite[Corollary~4.3.3]{BL}.
\end{proof}

\begin{lemma}\label{lem:primitive-type}
If $D\in\NS(A)$ is primitive and $D^2=2d>0$, then one of $D,-D$ is the
class of a polarization of type $(1,d)$.
\end{lemma}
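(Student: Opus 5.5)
By Lemma~\ref{lem:positive-is-ample}, after replacing $D$ by $-D$ we may assume $D$ is ample. Choose a line bundle $L$ with $c_1(L)=D$. By the elementary divisor theorem it has a type $(d_1,d_2)$ with $d_1\mid d_2$. Two things remain to check: that $d_1=1$, and that $d_2=d$.

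The second is immediate from the earlier identity $L^2=2d_1d_2$. Indeed, $2d=D^2=2d_1d_2$, so $d_1d_2=d$, and $d_1=1$ will force $d_2=d$.

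For $d_1=1$, I would use the general fact that a polarization of type $(d_1,d_2)$ is $d_1$ times a polarization of type $(1,d_2/d_1)$. Concretely, the first Chern class of $L$ is an alternating form $E$ on the lattice $\Lambda_A=H_1(A,\Z)$. By the Frobenius normal form there is a symplectic basis in which $E$ has matrix
\[
\begin{pmatrix}0&\Delta\\-\Delta&0\end{pmatrix},\qquad \Delta=\operatorname{diag}(d_1,d_2).
\]
Hence $E=d_1E'$ with $E'$ integral-valued on $\Lambda_A$. Moreover $E'$ still satisfies the Riemann relation $E'(iu,iv)=E'(u,v)$, since it is a real multiple of $E$. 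By the Appell--Humbert theorem (Lefschetz $(1,1)$), $E'$ is therefore the first Chern class of a line bundle, so $E'$ defines a class $D'\in\NS(A)$ with $D=d_1D'$. Primitivity of $D$ then gives $d_1=1$. Equivalently, $\NS(A)\subset H^2(A,\Z)$ is identified with the integral alternating forms satisfying the Riemann relation, so divisibility of $E$ by $d_1$ inside $H^2(A,\Z)$ implies divisibility inside $\NS(A)$.

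The only step needing care is this last identification: that divisibility in $H^2(A,\Z)=\bigwedge^2\operatorname{Hom}(\Lambda_A,\Z)$ descends to $\NS(A)$. This holds because $\NS(A)$ is saturated in $H^2(A,\Z)$: it is the intersection of $H^2(A,\Z)$ with $H^{1,1}$, a real subspace. I would cite \cite[Chapter~2 and Theorem~3.6.1]{BL} for the normal form and for the correspondence between $\NS(A)$ and Hermitian forms. The resulting $L$ is an ample line bundle of type $(1,d)$ with class $D$ (or $-D$), which is the claim.
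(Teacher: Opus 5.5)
Your proof is correct and takes essentially the same route as the paper. Both arguments rest on three points: the saturation of $\NS(A)$ in $H^2(A,\Z)$; the Frobenius normal form, which shows that the Riemann form of a type-$(d_1,d_2)$ polarization is divisible by $d_1$, so primitivity forces $d_1=1$; and the identity $L^2=2d_1d_2$, which then gives $d_2=d$. The only difference is that you spell out the saturation step through the Appell--Humbert description of $\NS(A)$, which the paper states in one line.
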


\begin{proof}
By saturation, $D$ is primitive in integral cohomology.  If an ample
line bundle has type $(d_1,d_2)$, then its square is $2d_1d_2$, and the
divisibility of its alternating Riemann form is $d_1$.  Its first Chern
class is therefore primitive exactly when $d_1=1$; see
\cite[Section~3.1 and Lemma~3.6.4]{BL}.  Apply
Lemma~\ref{lem:positive-is-ample}.
\end{proof}


\subsection{Martin's cohomological obstruction}
\label{subsec:martin-obstruction}

Let $I(A)\subset\Z$ denote the image of the intersection pairing
\[
                 \Sym^2\NS(A)\longrightarrow\Z,
                 \qquad D\odot E\longmapsto D\cdot E.
\]
The following theorem is the starting point for the necessary direction.

\begin{theorem}[Martin]\label{thm:martin}
If a complex abelian surface admits a dominant rational map of degree
three to $\PP^2$, then $12\in I(A)$.
\end{theorem}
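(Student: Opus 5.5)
The plan is to reconstruct Martin's argument through the residual (trace) correspondence attached to a cubic map. Let $\varphi:A\dashrightarrow\PP^2$ be dominant of degree three. A general line $\ell\subset\PP^2$ pulls back to a curve $C_\ell\subset A$, and a general point $p\in\PP^2$ has a fibre $\{x_1,x_2,x_3\}$. First resolve $\varphi$ to a morphism $\tilde A\to\PP^2$ and write $H$ for the class on $A$ of the closure of $\varphi^*\ell$. Then $H$ is effective and nonzero, hence nef by Lemma~\ref{lem:standard}(i). Its self-intersection satisfies $H^2\ge 3$, because two general lines meet in one point whose three preimages lie in $C_\ell\cap C_{\ell'}$, with base points contributing extra. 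Since $H^2$ is even, we get $H^2\ge4$.

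The key construction is the residual correspondence. Consider
\[
Z=\{(x,y)\in A\times A:\ x\neq y,\ \varphi(x)=\varphi(y)\},
\]
a surface mapping $2:1$ onto $A$ by the first projection. The sum map $\sigma:A\dashrightarrow A$, $x\mapsto x_1+x_2+x_3$, factors through $\PP^2$, so it is constant, since a rational map from $\PP^2$ to $A$ is constant. After translation, $x_1+x_2+x_3=0$ on fibres. Hence the residual pair $\{y,z\}$ of $x$ satisfies $y+z=-x$, and the difference $y-z$ gives a map from the double cover $Z\to A$ back to $A$. We would compare the cohomology classes of the graph-type correspondences. The trace identity $\Gamma_\varphi^t\Gamma_\varphi = 3\Delta + (\text{residual})$ on $H^2$ and $H^1$ yields an identity in $\operatorname{End}(H^1(A))$. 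On holomorphic $1$-forms, pullback followed by trace kills everything, because $\PP^2$ has none. So $\mathrm{id}+R=0$ on $H^{1,0}$, where $R$ is the residual correspondence acting.

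Next we translate this into Néron--Severi data. A correspondence $R\subset A\times A$ acts on $H^1$ through its class in $H^2(A\times A)$, and the Künneth part in $H^1\otimes H^1$ corresponds to an element of $\operatorname{End}^0(A)\otimes\mathbf Q$, which is related to $\NS$ via the Rosati involution. The plan is to compute the self-intersection of $Z$, or of the graph class, in two ways. On one side, $Z$ is the closure of the fibre product minus the diagonal, so $[Z]=(\varphi\times\varphi)^*[\Delta_{\PP^2}]-[\Delta_A]-(\text{indeterminacy corrections})$, and $[\Delta_{\PP^2}]=h_1^2+h_1h_2+h_2^2$ pulls back to $H_1^2+H_1H_2+H_2^2$. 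Comparing this with $\Delta_A$, whose $H^1\otimes H^1$ Künneth component must cancel that of $Z$ by the vanishing on $1$-forms, and intersecting with the class $H\boxtimes H$, produces an expression $\sum a_{ij}D_i\cdot D_j$ with $D_i\in\NS(A)$ equal to $12$. Here $12$ arises as $2\cdot 3!$, or as $\deg\varphi\cdot(\deg\varphi+1)$, from the Euler characteristic of the fibre configuration. This places $12$ in $I(A)$.

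I expect the main obstacle to be making the $H^1\otimes H^1$ Künneth component of $[Z]$ expressible through $\NS(A)$ and controlling the indeterminacy and ramification contributions. Concretely, the obstacle is to show that the integrality of the relevant intersection numbers forces $12$, rather than a rational multiple of $12$, to lie in the image of $\Sym^2\NS(A)$. To handle it, I would first try to work with the Albanese-type map $Z\to A$, $(x,y)\mapsto x-y$. Pulling back the class of a polarization along this map gives classes in $\NS(A)$ through the symmetric product decomposition, and these could then be compared with the trace identity above. Since this theorem is attributed to Martin, in the paper it would simply be cited as \cite[Theorem~1.1]{Martin}, and the sketch above indicates how it would be reproven.
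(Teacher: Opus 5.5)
\textbf{There is a genuine gap: your outline never produces the number $12$.} The two steps that carry the content of Martin's theorem are left unproved.

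\textbf{First gap: the shape of $[W]$.} What is needed is the following, which the paper imports from Martin as Proposition~\ref{prop:residual-class}:
\[
[W]=3[A\times\{0\}]+3[\{0\}\times A]-[\Delta_A]+\Gamma_B,\qquad \Gamma_B=\sum_{i,j}b_{ij}\,e_i\boxtimes e_j,\quad B\in\Sym_r(\Z),
\]
together with $\tr(BQ)=12$. Once you have both, $12=\sum b_{ij}(e_i\cdot e_j)\in I(A)$ immediately.

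Your route to this shape has a degree error. $W$ is a surface in the fourfold $A\times A$, so $[W]\in H^4(A\times A,\Z)$, not $H^2$. Its action on $H^1$ is carried by the $H^3\otimes H^1$ K\"unneth component. The component you must show lies in $\NS(A)\otimes\NS(A)$ is the $H^2\otimes H^2$ one, and information about $1$-forms says nothing about it. Also, $\Gamma_\varphi^t\circ\Gamma_\varphi=\Delta_A+W$; it is $\Gamma_\varphi\circ\Gamma_\varphi^t$ that equals $3\Delta_{\PP^2}$.

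You can close this gap in either of two ways.
\begin{enumerate}[(a)]
\item Hodge-theoretically. The operator $(\Delta_A+W)_*=\varphi^*\varphi_*$ kills $H^{2,0}(A)$ as well as $H^{1,0}(A)$, hence kills the transcendental lattice. So the $(2,2)$-part of $[W]+[\Delta_A]$ lies in $\NS(A)_{\mathbf Q}\otimes\NS(A)_{\mathbf Q}$. It is integral because $\NS(A)$ is saturated in $H^2(A,\Z)$.
\item By making your diagonal pullback precise. Take a resolution $\mu:\tilde A\to A$ with moving class $\tilde H$. The preimage of $\Delta_{\PP^2}$ has pure dimension two. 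Its components are $\Delta_{\tilde A}$, the strict transform of $W$, and products $C\times C'$ of curves contracted to a common point. Pushing $\tilde H^2\boxtimes1+\tilde H\boxtimes\tilde H+1\boxtimes\tilde H^2$ forward then gives exactly the displayed shape. The identity $\mu_*(\tilde H^2)=3[\mathrm{pt}]$ absorbs all base-point corrections.
\end{enumerate}

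\textbf{Second gap, the decisive one: the value $12$.} Intersecting with $H\boxtimes H$ only yields
\[
[W]\cdot(H\boxtimes H)=-H^2+\sum b_{ij}(e_i\cdot H)(e_j\cdot H),
\]
and you have no independent way to evaluate this. Neither $2\cdot3!$ nor $\deg\varphi(\deg\varphi+1)$ comes with any mechanism. Moreover, the relation $x+y+z=0$, which you derive, is never used in your intersection computation, yet it is exactly what pins down the trace.

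The missing step uses this relation as a symmetry. The map $\tau(x,y)=(x,-x-y)$ is an involution of $A\times A$ preserving $W$. For general $c$, the surface $\Gamma^c:=\tau(A\times\{c\})=\{(x,-x-c)\}$ is homologous to the graph of $[-1]_A$. It meets $W$ transversally in exactly two points, namely the ordered pairs completing $c$ to its fibre. Intersecting the displayed formula with $[\Gamma^c]$ gives
\[
2=3+3-\#A[2]+\sum_{i,j}b_{ij}\,e_i\cdot[-1]_A^*e_j=6-16+\tr(BQ),
\]
hence $\tr(BQ)=12$. So $12=16-4$ comes from the sixteen fixed points of $[-1]_A$, not from the combinatorics of a cubic fibre.
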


\begin{proof}
It directly follows from \cite[Theorem~1.1]{Martin}; see also Proposition~\ref{prop:residual-class}.
\end{proof}

\begin{corollary}\label{prop:rank-one-necessity}
Let \(A\) be a complex abelian surface with \(\rho(A)=1\).  If \(A\)
admits a dominant rational map of degree three to \(\PP^2\), then it
admits a polarization of type \((1,d)\) with
\(d\in\{1,2,3,6\}\).
\end{corollary}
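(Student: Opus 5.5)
With \(\rho(A)=1\) the lattice \(\NS(A)\) is \(\Z H\), where \(H\) is a primitive generator. The plan is to combine Theorem~\ref{thm:martin} with the primitive-type statement Lemma~\ref{lem:primitive-type}.

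First, identify the generator. By Lemma~\ref{lem:positive-is-ample} we may choose \(H\) ample, and \(A\) is projective, so some ample class exists and \(H^2>0\). The intersection form is even by \eqref{eq:even-intersection}, so \(H^2=2d\) for some integer \(d\geq1\). Since \(H\) is primitive, Lemma~\ref{lem:primitive-type} shows that \(H\) (or \(-H\), but we took \(H\) ample) is the class of a polarization of type \((1,d)\).

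Next, compute \(I(A)\). Every element of \(\Sym^2\NS(A)\) is an integral multiple of \(H\odot H\), so its image under the pairing is a multiple of \(H^2=2d\). Conversely every multiple occurs. Hence \(I(A)=2d\Z\).

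Finally, apply Theorem~\ref{thm:martin}. A degree-three dominant map \(A\dashrightarrow\PP^2\) forces \(12\in I(A)=2d\Z\), that is, \(2d\mid12\), so \(d\mid6\). Therefore \(d\in\{1,2,3,6\}\), and \(H\) is the required polarization.

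There is no real obstacle here. The only point needing care is that \(H\) is primitive in \(\NS(A)\), which holds because it generates a rank-one free group. Primitivity in \(\NS(A)\) is all that Lemma~\ref{lem:primitive-type} requires, so no separate check of primitivity in cohomology is needed.
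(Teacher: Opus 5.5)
Your proof is correct and follows the paper's argument: you take the primitive generator of $\NS(A)$ with its sign chosen to be ample, write its square as $2d$, observe $I(A)=2d\Z$, use Theorem~\ref{thm:martin} to get $d\mid 6$, and conclude with Lemma~\ref{lem:primitive-type}. The only difference is that you invoke the lemma before Martin's theorem rather than after, which does not matter.
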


\begin{proof}
Let \(h\) be a primitive generator of \(\NS(A)\), with its sign chosen
so that it is ample.  Write \(h^2=2d\).  The image of the intersection
pairing is \(I(A)=2d\Z\).  Theorem~\ref{thm:martin} gives
\(12\in2d\Z\), so \(d\mid6\).  Lemma~\ref{lem:primitive-type}
finishes the proof.
\end{proof}

We recall the constant-cycle construction in Martin's
proof of Theorem~\ref{thm:martin}
\cite[proof of Theorem~1.1]{Martin}: instead of retaining only one
intersection number, we keep the full integral tensor on
\(\NS(A)\).

Let $\varphi:A\dashrightarrow\PP^2$
be dominant of degree three.  
We may assume that every general
fiber \(\{x,y,z\}\) satisfies
\begin{equation}\label{eq:fiber-sum}
                              x+y+z=0.
\end{equation}
Define the \emph{reduced ordered residual correspondence} by
\begin{equation*}\label{eq:def-W}
 W=\overline{\{(x,y)\in A\times A:x\neq y,
                    \ \varphi(x)=\varphi(y)\}}\subset A\times A,
\end{equation*}
where the closure is taken from the finite \'etale locus of $\varphi$.  Both
projections $W\to A$ have total generic degree two.

\begin{proposition}\label{prop:residual-class}
Put \(\Lambda=\NS(A)\), let \(e_1,\dots,e_r\) be an integral basis,
where \(r=\rho(A)\), and let \(Q=(e_i\cdot e_j)_{i,j = 1}^r\).  There is an
integral symmetric matrix \(B=(b_{ij})\in\Sym_r(\Z)\) such that, in
\(H^4(A\times A,\Z)\),
\begin{equation}\label{eq:residual-class}
 [W]=3[A\times\{0\}]+3[\{0\}\times A]-[\Delta_A]+\Gamma_B,
 \qquad
 \Gamma_B=\sum_{i,j=1}^r b_{ij}e_i\boxtimes e_j.
\end{equation}
Moreover,
\begin{equation}\label{eq:trace-twelve}
                              \tr(BQ)=12.
\end{equation}
We shall call \(B\) the \emph{residue matrix}.
\end{proposition}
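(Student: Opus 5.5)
We compute the class of $W$ in $H^4(A\times A,\Z)$ via its Künneth components and then read off $B$ from the part in $H^2\otimes H^2$.

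\textbf{Step 1: the $H^0\otimes H^4$ and $H^4\otimes H^0$ components.} Both projections $W\to A$ have degree two, so these coefficients are pinned down: the component of $[W]$ along $[\{0\}\times A]$ and along $[A\times\{0\}]$ is $2$ each. Since $[\Delta_A]$ contributes $1$ to each of these components, $[W]+[\Delta_A]$ has coefficient $3$ there. This gives the terms $3[A\times\{0\}]+3[\{0\}\times A]$.

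\textbf{Step 2: the odd Künneth components.} These are the $H^1\otimes H^3$, $H^3\otimes H^1$ and $H^2\otimes H^2$ pieces other than the algebraic part. We use the fiber relation \eqref{eq:fiber-sum}. The correspondence $W_*$ acts on $H^{1,0}$ and $H^{0,1}$: for a holomorphic $1$-form $\omega$ we have $W_*\omega(x)=\omega(y)+\omega(z)$, where $y,z$ are the other two points of the fiber, which is the pullback of $\omega$ under $x\mapsto y+z=-x$. Hence $W_*=-1$ on $H^1$ and, by transposition and Poincaré duality, on $H^3$ too. The same action holds for $[-\Delta_A]$, so the $H^1\otimes H^3$ and $H^3\otimes H^1$ parts of $[W]+[\Delta_A]$ vanish (all cohomology is torsion free).

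For $H^2\otimes H^2$: on $H^{2,0}$, $W_*\sigma=\sigma(y)+\sigma(z)$ is the trace $\varphi^*\varphi_*\sigma-\sigma=-\sigma$, because $\PP^2$ has no $2$-forms. So $([W]+[\Delta_A])_*$ kills $H^{2,0}$ and $H^{0,2}$. The $H^2\otimes H^2$ component $\Gamma$ of $[W]+[\Delta_A]$ is an integral Hodge class. Viewed as an endomorphism of $H^2(A,\Z)$, it annihilates the transcendental lattice $T$: it kills $H^{2,0}\oplus H^{0,2}$, and its kernel is rationally defined, so it contains the smallest rational sub-Hodge structure containing $H^{2,0}$, which is $T_\Q$. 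Its transpose also kills $T$, so $\Gamma$ lies in $(\NS\otimes\NS)\otimes\Q\cap H^2\otimes H^2(\Z)$. Because $\NS(A)$ is saturated in $H^2(A,\Z)$ (the Lefschetz $(1,1)$ theorem), this intersection is $\NS\otimes\NS$, and we can write $\Gamma=\sum b_{ij}e_i\boxtimes e_j$ with $b_{ij}\in\Z$.

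Symmetry of $B$ follows from the swap involution $\tau$ of $A\times A$, which preserves $W$ and $\Delta_A$ and acts on $H^2\otimes H^2$ by exchanging factors with no sign, since the degree is even.

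\textbf{Step 3: the trace.} Intersect $[W]$ with $[\Delta_A]$. Geometrically, $W\cap\Delta_A$ consists of points $x$ where two points of a fiber collide. Handling this directly would require knowing the ramification of $\varphi$, so we use the Lefschetz fixed-point formula instead, applied to both sides. We have
\[
[\Delta]\cdot[A\times 0]=1,\qquad [\Delta]\cdot[0\times A]=1,\qquad [\Delta]^2=\chi_{\mathrm{top}}(A)=0,
\]
and $[\Delta]\cdot(e_i\boxtimes e_j)=e_i\cdot e_j$, so
\[
[W]\cdot[\Delta]=6+\tr(BQ).
\]
On the other hand, $[W]\cdot[\Delta]$ equals the Lefschetz number of the correspondence $W$, namely $\sum(-1)^k\tr(W_*|H^k)$. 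Steps 1–2 give the traces: $2$ on $H^0$ and on $H^4$, $-1\cdot 4$ on $H^1$ and on $H^3$ (each contributing $+4$ after the sign $(-1)^k$), and on $H^2$ the trace is $-6+\tr(BQ)$. The Lefschetz number is therefore
\[
2+4+(\tr(BQ)-6)+4+2=6+\tr(BQ),
\]
which is a tautology and gives no information.

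\textbf{Main obstacle.} The real input must be geometric: we need an independent computation of $W\cdot\Delta$, presumably $18$. This is the step I expect to be hardest. I would compute it from the ramification of $\varphi$ on a resolution $\tilde A\to A$. The canonical class of $\tilde A$ is supported on exceptional curves, and $K_{\tilde A}=\varphi^*K_{\PP^2}+R$, which gives the ramification class $R\equiv 3\varphi^*\ell+(\text{exceptional})$. One then has $W\cdot\Delta=\deg(R|\ldots)$, which should produce $3\cdot 3\cdot 2=18$. Alternatively, as in Martin's argument, I would take the constant-cycle relation $x+y+z=0$: push $W$ forward under the addition map $A\times A\to A$ and compare it with $[-1]$ applied to the graph. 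This determines the $H^2\otimes H^2$ data and leads directly to $\tr(BQ)=12$.
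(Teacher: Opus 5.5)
Your Steps 1 and 2 are sound and yield the class formula with an integral symmetric $B$. The degrees of the projections fix the $H^0\otimes H^4$ and $H^4\otimes H^0$ parts. The relation $x+y+z=0$ gives $W_*=-1$ on $H^1$ and $H^3$. The trace $\varphi^*\varphi_*=0$ on $2$-forms gives $W_*=-1$ on $H^{2,0}$. Saturation of $\NS(A)$ then gives integrality, and swap-invariance gives symmetry.

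\textbf{The gap.} The identity $\tr(BQ)=12$ is the entire content of Martin's obstruction, which the paper simply cites, and your proposal does not prove it. As you note, intersecting with $[\Delta_A]$ is a tautology. The ramification repair does not work as stated. $W$ contains $\{(x,x):x\in R\}$ for the ramification curve $R$ of $\varphi$, so $W\cap\Delta_A$ is an excess intersection, and the figure $3\cdot 3\cdot 2=18$ has no derivation. Your last sentence about the addition map points the right way but is never carried out, so Step~3 is missing.

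\textbf{How to close it.} Intersect with the antidiagonal $\Delta^-=\{(y,-y)\}$, which is a fibre of the addition map $m:A\times A\to A$, instead of with $\Delta_A$.

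\emph{Geometric side.} On $W$ we have $m(x,y)=x+y=-z$. Over a general $w\in A$, the fibre of $m|_W$ consists of the two orderings of the residual pair in the fibre of $\varphi$ through $-w$. The points added by taking the closure form an at most one-dimensional set, so they do not meet a general fibre. Hence
\[
 [W]\cdot[\Delta^-]=\deg(m|_W)=2 .
\]
Equivalently, $W$ is preserved by $(x,y)\mapsto(y,-x-y)$, which carries $\{0\}\times A$ onto $\Delta^-$.

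\emph{Cohomological side.} One has $[A\times\{0\}]\cdot[\Delta^-]=[\{0\}\times A]\cdot[\Delta^-]=1$. Next, $\Delta_A\cap\Delta^-$ consists of the $|A[2]|=16$ two-torsion points, meeting transversally, so $[\Delta_A]\cdot[\Delta^-]=16$. Finally, $\Gamma_B\cdot[\Delta^-]=\sum b_{ij}\,e_i\cdot[-1]^*e_j=\tr(BQ)$, since $[-1]^*$ is the identity on $H^2$. The class formula therefore gives $[W]\cdot[\Delta^-]=6-16+\tr(BQ)$.

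Equating the two sides gives $\tr(BQ)=12$. In your Lefschetz language, twisting the correspondence by $[-1]$ reverses the sign of the $H^1$ and $H^3$ contributions. The cohomological side becomes $\tr(BQ)-10$ instead of $\tr(BQ)+6$, and now the geometric side is known independently.
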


\begin{proof}
Refer to \cite[Proof of Theorem~1.1 and Proposition 2.1]{Martin}.
\end{proof}

\subsection{Moretti's good-pairs correspondence}
\label{subsec:moretti-good-pairs}

We recall the part of Moretti's construction used in Section~\ref{sec:16}.
Let $X$ be a smooth projective variety of dimension $n$, let $\mathcal E$
be a reflexive sheaf of rank $n$, and let
$V^\vee\subset H^0(X,\mathcal E)$ be a vector space of dimension $n+1$.
Following \cite[Definition~1.3]{Moretti}, the pair
$(\mathcal E,V^\vee)$ is called a \emph{good pair} if $V^\vee$ generates
$\mathcal E$ in codimension one and
$H^0(X,\mathcal E^\vee)=0$.

Given a good pair $(\mathcal E,V^\vee)$, the kernel of the evaluation
map $\operatorname{ev}:V^\vee\otimes\mathcal O_X\to\mathcal E$ is
$\mathcal L^{-1}$, where $\mathcal L:=\det\mathcal E$.  Dualizing
produces an exact sequence
\begin{equation*}
  0\longrightarrow\mathcal E^\vee\longrightarrow
  V\otimes\mathcal O_X\longrightarrow\mathcal L.
\end{equation*}
Thus $V$ is a subspace of $H^0(X,\mathcal L)$, and
\[
 V=\operatorname{im}\!\left(
       \bigwedge\nolimits^n V^\vee
       \longrightarrow H^0(X,\mathcal L)\right).
\]
The linear subsystem $|V|\subset|\mathcal L|$, called the \emph{determinant linear system}, induces a nondegenerate
rational map
\[
  \varphi_V:X\dashrightarrow\PP(V^\vee).
\]
At a point where evaluation is surjective, the map sends $x$ to the
one-dimensional kernel of $V^\vee\to\mathcal E_x$; we therefore call
$\varphi_V$ the \emph{kernel map} of $(\mathcal E,V^\vee)$.  If
$[s]\in\PP(V^\vee)$, then
\begin{equation}\label{fiber_formula}
  \varphi_V^{-1}([s])\cap\bigl(X\setminus\operatorname{Bl}(V^\vee)\bigr)
  =Z_{\mathcal E}(s)\cap
   \bigl(X\setminus\operatorname{Bl}(V^\vee)\bigr),
\end{equation}
where $\operatorname{Bl}(V^\vee)$ is the locus where $\mathcal E$ is not
locally free or is not generated by $V^\vee$, and $Z_{\mathcal E}(s)$
is the zero locus of $s$ as a section of $\mathcal E$.

Moretti's correspondence asserts that every good pair determines this
nondegenerate rational map and that, when the map is generically finite,
its degree is the number of zeros away from
$\operatorname{Bl}(V^\vee)$ of a general section of $V^\vee$, counted
with multiplicity \cite[Proposition~1.5]{Moretti}.  Consequently, if
$\varphi_V$ is generically finite, $\mathcal E$ is a vector bundle, and
a general section has a
zero-dimensional zero scheme, then one obtains the useful estimate
\[
  \deg(\varphi_V)\leq c_n(\mathcal E).
\]
The correspondence does not by itself guarantee that $\varphi_V$ is
generically finite; this is the point that must be checked separately in
the type-$(1,6)$ argument.

\subsection{Full level-three structures and relative theta divisors}
\label{subsec:level-structures}

Fix a primitive third root of unity and the standard symplectic form on
$(\Z/3\Z)^4$.  A full symplectic level-$3$ structure on a principally
polarized abelian surface $(B,\Theta)$ is an isomorphism
$\alpha:(\Z/3\Z)^4\xrightarrow{\sim}B[3]$ carrying the standard form to
the Weil pairing of $\Theta$.  For level at least three, Mumford's
construction represents the moduli functor by a quasi-projective fine
moduli scheme, which therefore carries a universal principally polarized
abelian scheme $\pi:\mathscr B\to\mathcal A_2(3)$
\cite[Chapter~7, Theorems~7.9--7.10]{MFK}.  Over $\C$, the
analytification of the chosen symplectic component is
$\Gamma(3)\backslash\mathfrak H_2$ \cite[Theorem~8.3.2]{BL}.  The
action is free \cite[Corollary~5.1.10]{BL}, so this component is smooth;
it is connected, and hence irreducible.
The group
$\operatorname{Sp}_4(\mathbf F_3)$ acts by changing the marking.  This
action changes only the labels of the three-torsion points: it does not
alter the underlying abelian surface, its polarization, or a chosen theta
divisor.  Since every nonzero vector in a symplectic vector space is
isotropic, elementary symplectic linear algebra shows that
$\operatorname{Sp}_4(\mathbf F_3)$ acts transitively on the nonzero
vectors of $(\mathbf F_3)^4$.

Set $S=\mathcal A_2(3)$.  A polarization on the universal abelian scheme
$\pi:\mathscr B\to S$ is initially a
homomorphism to the dual scheme rather than a specified line bundle.
The symmetric line bundles inducing it form a torsor under the finite
\'etale group scheme $\widehat{\mathscr B}[2]$ (identified with
$\mathscr B[2]$ by the principal polarization)
\cite[Lemma~4.6.2]{BL}.  Consequently, after a finite \'etale base
change, we replace $S$ and $\mathscr B$ by their pullbacks and retain the
same notation.  On this cover one may choose a symmetric relatively
ample line bundle $\mathscr L$ and rigidify it along the zero section.
Since $\pi_*\mathscr L$ is invertible and formation of global sections
commutes with base change, the evaluation morphism
$\pi^*\pi_*\mathscr L\to\mathscr L$ cuts out a relative effective Cartier
divisor $\mathscr C\subset\mathscr B$; explicitly,
$\cO_{\mathscr B}(\mathscr C)=
\mathscr L\otimes\pi^*(\pi_*\mathscr L)^{-1}$.  Thus rigidification is
additional structure on the theta line bundle, while $\mathscr C$ is its evaluation
divisor inside the total abelian scheme.  The finite \'etale cover records
only the choice of a symmetric representative and does not modify any
geometric fiber.  Two representatives differ by a relative two-torsion
translation, so they have the same image under multiplication by two.

\section{The necessary condition}\label{sec:necessary}
\subsection{The residual matrix of a cubic map}
\label{sec:residual}

We now prove the necessary direction of Theorem~\ref{thm:main}.  We keep the notation and assumption in Subsection~\ref{subsec:martin-obstruction}.
Put \(K=\C(\PP^2)\) and \(L=\C(A)\).  The extension \(L/K\) is
separable of degree three.

\begin{proposition}\label{prop:cyclic-low-rank}
If \(\rho(A)\le3\), then \(L/K\) is not cyclic.
\end{proposition}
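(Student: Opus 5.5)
The plan is to argue by contradiction. Suppose $L/K$ is cyclic of degree three. Then there is a field automorphism $\sigma$ of $L=\C(A)$ of order three fixing $K$, i.e. a birational self-map of $A$ of order three with $\varphi\circ\sigma=\varphi$. By Weil's extension theorem (Subsection~\ref{subsec:conventions}), $\sigma$ is a regular automorphism of $A$. So we can write $\sigma(x)=\alpha(x)+t$ with $\alpha\in\operatorname{Aut}(A)$ a group automorphism and $t\in A$. A general fiber of $\varphi$ is then the orbit $\{x,\sigma x,\sigma^2x\}$, and the normalization \eqref{eq:fiber-sum} gives
\[
 (1+\alpha+\alpha^2)(x)+(\alpha+2)(t)=0 \quad\text{for general }x,
\]
hence for all $x$. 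Therefore $1+\alpha+\alpha^2=0$ in $\operatorname{End}(A)$. In particular $\alpha\ne 1$, which rules out the pure translation case (where the fiber sum $3x+3t$ would not be constant), and $\alpha$ has order three with no eigenvalue $1$ on $H^1$.

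The next step uses the rationality of the quotient. Since $\varphi$ factors birationally through $A/\langle\sigma\rangle\dashrightarrow\PP^2$, the quotient is rational. Hence it has no nonzero holomorphic $2$-forms. Any $\sigma$-invariant holomorphic $2$-form on $A$ would descend to a resolution of the quotient, so $H^0(A,\Omega^2_A)$ has no $\sigma$-invariants. Now $\sigma^*$ acts on $H^0(\Omega^1_A)$ through $\alpha$, whose eigenvalues lie in $\{\zeta,\bar\zeta\}$ because $1+\alpha+\alpha^2=0$. If the eigenvalues were $\zeta,\bar\zeta$, then $dz_1\wedge dz_2$ would be multiplied by $\zeta\bar\zeta=1$, which is a contradiction. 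Hence $\alpha^*$ acts on $V:=H^{1,0}(A)$ as a scalar, say $\zeta$, and on $\bar V=H^{0,1}(A)$ as $\bar\zeta$.

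Finally, compute the Picard number. The automorphism $\alpha^*$ is defined on $H^1(A,\mathbf Q)$, so its invariant subspace in $H^2(A,\mathbf Q)=\wedge^2H^1(A,\mathbf Q)$ is defined over $\mathbf Q$. We have
\[
 \wedge^2(V\oplus\bar V)=\wedge^2V\oplus(V\otimes\bar V)\oplus\wedge^2\bar V,
\]
on which $\alpha^*$ acts by $\zeta^2,1,\bar\zeta^2$ respectively. So the invariant part is exactly $V\otimes\bar V=H^{1,1}(A)$, which is $4$-dimensional. Thus $H^{1,1}$ is spanned by rational classes, and by Lefschetz $(1,1)$ we get $\rho(A)=4$, contradicting $\rho(A)\le3$.

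The main obstacle is the middle step: justifying carefully that invariant $2$-forms descend to a smooth model of $A/\langle\sigma\rangle$. This is standard, since holomorphic forms extend across quotient singularities, and the quotient is birational to $\PP^2$, which has $p_g=0$. Once that holds, everything else is linear algebra on eigenvalues.
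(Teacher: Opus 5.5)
Your argument is correct, but it takes a genuinely different route from the paper.

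\textbf{The paper's route.} It stays inside Martin's cohomological framework. In the cyclic case it writes $[W]=[\Gamma_\sigma]+[\Gamma_{\sigma^2}]$ and compares intersection matrices with the residual-class formula \eqref{eq:residual-class}. This gives $BQ=I+R+R^2$, where $R$ is the matrix of $\sigma^*$ on $\NS(A)_\R$. Since $R^3=I$ makes $(I+R+R^2)/3$ the projector onto $\Lambda_\R^R$, the trace identity $\tr(BQ)=12$ forces $3\dim\Lambda_\R^R=12$. That is impossible when $\rho(A)\le3$.

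\textbf{Your route.} You bypass the residue matrix entirely. You split $\sigma=T_t\circ\alpha$ and use constancy of fiber sums to get $1+\alpha+\alpha^2=0$ in $\operatorname{End}(A)$. You then use $p_g=0$ for the rational quotient to force $\alpha^*$ to act on $H^{1,0}$ by a scalar primitive cube root of unity. Finally, the decomposition of $\wedge^2H^1$ and Lefschetz $(1,1)$ show that all of $H^{1,1}$ is spanned by rational classes.

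\textbf{What each buys.} The paper's version is essentially a two-line corollary of Proposition~\ref{prop:residual-class}, which it needs anyway for the non-Galois case. So the whole necessary direction runs through one lattice invariant, but it inherits all of Martin's input (the computation of $[W]$ and $\tr(BQ)=12$).

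Yours is self-contained and more elementary. Its only nonformal input is the descent of $G$-invariant $2$-forms to a resolution of $A/\langle\sigma\rangle$, which you rightly flag. This is safe here: $1-\alpha$ is an isogeny, so $\sigma$ has only isolated fixed points and the quotient has only cyclic quotient singularities, which are rational.

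Your approach also yields more structure. It shows $A$ carries multiplication by $\Z[\zeta_3]$ acting by a scalar on the tangent space, so $A$ is isogenous to the square of the elliptic curve with $j=0$, exactly as in the example of Remark~\ref{rem:cyclic-rank-four}. The paper's conclusion that $\sigma^*$ fixes a $4$-dimensional subspace of $\NS(A)_\R$ is the lattice-theoretic shadow of your statement that $\alpha^*$ is trivial on $H^{1,1}$.
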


\begin{proof}
Suppose that \(L/K\) is cyclic, with deck transformation \(\sigma\).
The birational map \(\sigma:A\dashrightarrow A\) extends to a regular
automorphism.  Over the finite \'etale locus and hence after taking
closures, as cycles, we have
\[
                         [W]=[\Gamma_\sigma]+[\Gamma_{\sigma^2}],
\]
where $\Gamma_\sigma$ and $\Gamma_{\sigma^2}$ are the graphs of $\sigma$ and $\sigma^2$, respectively.
Let \(R\) be the matrix of the linear transformation $\sigma^*|_{\Lambda_{\R}}$ with respect to the basis chosen in Proposition~\ref{prop:residual-class}.  Comparing intersections of
the two sides with \eqref{eq:residual-class} gives
\[
                         BQ=I+R+R^2.
\]
Indeed, the matrix of the two graphs is \(Q(R+R^2)\), whereas
the matrix in \eqref{eq:residual-class} is \(QBQ-Q\).
Since \(R^3=I\), the endomorphism
\((I+R+R^2)/3\) is the projector onto \(\Lambda_{\R}^R\).  Hence
\[
 12=\tr(BQ)=3\dim_{\R}\Lambda_{\R}^R\le3\rho(A)<12,
\]
a contradiction.
\end{proof}

\begin{remark}\label{rem:cyclic-rank-four}
The restriction on the Picard number in
Proposition~\ref{prop:cyclic-low-rank} is essential.  Cyclic cubic maps
can occur when \(\rho(A)=4\).  For example, let
$E=\C/(\Z+\Z\zeta_3)$ and let
$\sigma=[\zeta_3,\zeta_3]\in\operatorname{Aut}(E\times E)$.
Yoshihara's classification shows that
$(E\times E)/\langle\sigma\rangle$ is rational
\cite[Theorem~2.1, pp.~136--137, and Note~2.5, p.~138]
{YoshiharaQuotients}.  Thus the quotient, followed by a birational map
to $\PP^2$, is a cyclic cubic map.  This causes no exception to the main
theorem, because every rank-four complex abelian surface is principally
polarizable by Lemma~\ref{lem:rho-four-principal} below.
\end{remark}

Assume now that \(L/K\) is non-Galois, and let \(N/K\) be its normal
closure.  Then \(\operatorname{Gal}(N/K)\simeq S_3\), and, as
\(L\)-algebras,
\[
                         L\otimes_KL\simeq L\times N.
\]
Indeed, after choosing a primitive element, its cubic minimal polynomial
over \(K\) factors over \(L\) as its linear factor times an irreducible
quadratic factor; reducibility of the latter would make \(L/K\) normal.
The generic residual algebra is therefore the field \(N\), so \(W\) is
integral and both projections \(W\to A\) have degree two.

Let \(Y\) be the normalization of \(\PP^2\) in \(N\), and choose a
projective \(S_3\)-equivariant resolution \(S\to Y\).  The three
embeddings of \(L\) into \(N\) give rational maps \(S\dashrightarrow A\),
which extend by Weil's theorem to morphisms
\[
                         q_i:S\longrightarrow A,\qquad i=1,2,3.
\]
Each has degree two.  Two distinct cubic subfields generate \(N\);
equivalently, their distinct order-two stabilizers in \(S_3\) have
trivial intersection.  Thus \((q_i,q_j): S\to A\times A\) is birational onto \(W\)
for \(i\ne j\), and
\begin{equation}\label{eq:pair-push}
                         (q_i,q_j)_*[S]=[W].
\end{equation}

\begin{proposition}\label{prop:hodge-tensors}
In the non-Galois case, the residual matrix $B$ satisfies
\begin{equation}\label{eq:hodge-tensor-constraints}
 \indp(B)=1,\qquad
 3Q-QBQ\preceq0.
\end{equation}
Moreover, \(Q+QBQ\) has positive index one.  In particular, if
\(\rho(A)=2\), then
\begin{equation}\label{eq:rank-two-plus-determinant}
                         \det(Q+QBQ)\le0.
\end{equation}
\end{proposition}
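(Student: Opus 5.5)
The plan is to realize all the forms in \eqref{eq:hodge-tensor-constraints} as Gram matrices of pulled-back classes on the resolved Galois surface $S$, and then apply the Hodge index theorem on $S$. For $D\in\Lambda_{\R}$ and $i\in\{1,2,3\}$ I consider $q_i^*D\in\NS(S)_{\R}$, and compute all pairwise intersections. Since $q_i$ has degree two, $q_i^*D\cdot q_i^*E=2\,D\cdot E$, that is, the form $2Q$ in column coordinates. For $i\neq j$, the projection formula together with \eqref{eq:pair-push} gives
\[
q_i^*D\cdot q_j^*E=(q_i,q_j)_*[S]\cdot(D\boxtimes E)=[W]\cdot(D\boxtimes E).
\]
I then evaluate this with \eqref{eq:residual-class}. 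The classes $[A\times\{0\}]$ and $[\{0\}\times A]$ pair to zero with $D\boxtimes E$, because a degree-two class restricted to a point vanishes. The diagonal contributes $D\cdot E$, and $\Gamma_B$ contributes $\sum b_{ij}(e_i\cdot D)(e_j\cdot E)$. Hence the off-diagonal pairing is the form $M:=QBQ-Q$.

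Next I decompose $\Lambda_{\R}^{\oplus3}\to\NS(S)_{\R}$, $(D_1,D_2,D_3)\mapsto\sum q_i^*D_i$, according to $S_3$-isotypic pieces. On the diagonal copy $(D,D,D)$, the Gram form is $3\cdot2Q+6M=6\,QBQ$. On the sum-zero part, I use $\sum_{i\neq j}D_i\cdot D_j=-\sum D_i^2$ (formally, for the bilinear form $M$), which gives the form $(2Q-M)$ summed over the components, i.e.\ $3Q-QBQ$ on each copy. These two pieces are orthogonal to each other, since the cross terms cancel by $\sum D_i=0$.

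By the Hodge index theorem on $S$, any subspace of $\NS(S)_{\R}$ has positive index at most one. Take $H$ ample on $A$. Then $q_1^*H+q_2^*H+q_3^*H$ is nef, and it is big because each $q_i$ is generically finite, so its square is positive. Thus the diagonal piece already carries the unique positive direction. It follows that the orthogonal sum-zero piece is negative semidefinite, i.e.\ $3Q-QBQ\preceq0$, and that $QBQ$ has positive index exactly one. Since $Q$ is nondegenerate, Sylvester's law gives $\indp(B)=\indp(QBQ)=1$.

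For $Q+QBQ$ I use the vectors $(D,D,0)$, i.e.\ the classes $q_1^*D+q_2^*D$, whose Gram form is $2Q+2Q+2M=2(Q+QBQ)$. Hodge index bounds its positive index by one. The class $q_1^*H+q_2^*H$ is again nef and big, so its square is positive and the index is exactly one. When $\rho(A)=2$, a real symmetric $2\times2$ form with positive index one is either indefinite or of rank one with the other eigenvalue zero, so its determinant is $\le0$. This gives \eqref{eq:rank-two-plus-determinant}.

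The main point to check carefully is the off-diagonal pairing $[W]\cdot(D\boxtimes E)$. Here I must confirm two things: that $(q_i,q_j)_*[S]=[W]$ holds as classes, which follows from birationality onto $W$ and the integrality of $W$ in the non-Galois case; and that the coordinate convention makes $\Gamma_B$ pair to $D^{T}QBQ\,E$. The Hodge-index step itself is routine once this is in place.
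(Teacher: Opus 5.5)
Your proposal is correct and follows essentially the same route as the paper: the same Gram forms $2Q$ and $QBQ-Q$ on $S$, the class $q_1^*H+q_2^*H+q_3^*H$ to show that $QBQ$ has exactly one positive direction (so $\indp(B)=1$), and $(D,D,0)$, i.e.\ $q_1^*D+q_2^*D$, for $Q+QBQ$. The only cosmetic difference is how you get $3Q-QBQ\preceq0$: you use the whole sum-zero complement of the diagonal piece, whereas the paper uses only the antisymmetric classes $q_1^*D-q_2^*D$, which are orthogonal to the nef and big class $q_1^*H+q_2^*H$.
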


\begin{proof}
For \(D,E\in\Lambda_{\R}\), the projection formula and
\eqref{eq:pair-push} give
\begin{align}
 (q_i^*D\cdot q_i^*E)&=2D^tQE,\label{eq:self-pairing}\\
 (q_i^*D\cdot q_j^*E)&=D^t(QBQ-Q)E
                         \qquad(i\ne j).\label{eq:cross-pairing}
\end{align}
Set \(T(D)=q_1^*D+q_2^*D+q_3^*D\).  By \eqref{eq:self-pairing} and \eqref{eq:cross-pairing}, we have
\begin{equation}\label{eq:sum-gram}
                         (T(D)\cdot T(E))=6D^tQBQE.
\end{equation}
If \(H\) is ample, then \(T(H)\) is nef and big.  The Hodge index
theorem on \(S\), restricted to the image of \(T\), shows that \(QBQ\)
has at most one positive direction; \eqref{eq:sum-gram} at \(H\)
shows that it has one.  Since \(QBQ\) is congruent to \(B\),
\(\indp(B)=1\).

For the remaining constraints, put
\[
 U_+(D)=q_1^*D+q_2^*D,\qquad
 U_-(D)=q_1^*D-q_2^*D.
\]
Equations \eqref{eq:self-pairing}--\eqref{eq:cross-pairing} give
\begin{align}
 (U_+(D)\cdot U_+(E))&=2D^t(Q+QBQ)E,\label{eq:plus-gram}\\
 (U_-(D)\cdot U_-(E))&=2D^t(3Q-QBQ)E,\label{eq:minus-gram}\\
 (U_+(D)\cdot U_-(E))&=0.\label{eq:plus-minus-orthogonal}
\end{align}
The class \(U_+(H)\) is nef and big. Its orthogonal complement is
negative definite, so \eqref{eq:minus-gram} gives
\(3Q-QBQ\preceq0\).  The same Hodge-index argument applied to the
image of \(U_+\), together with \(U_+(H)^2>0\), shows that
\(Q+QBQ\) has positive index one.  In rank two this implies
\eqref{eq:rank-two-plus-determinant}.
\end{proof}

\subsection{Completion of the necessary condition}

\begin{lemma}\label{lem:rho-four-principal}
Let \(A\) be a complex abelian surface with \(\rho(A)=4\).  Then \(A\)
admits a principal polarization.
\end{lemma}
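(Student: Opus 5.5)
Since $\rho(A)=4$ is the maximal Picard number of an abelian surface, we have $h^{1,1}=4=\rho$, so the transcendental lattice $T_A=\NS(A)^\perp\subset H^2(A,\Z)$ has rank two and is positive definite. The plan is to reduce to the Shioda--Mitani classification \cite{ShiodaMitani}. It shows that an abelian surface with $\rho=4$ is determined by the positive definite even binary form on $T_A$. Moreover, every such surface is isomorphic to a product $E_1\times E_2$ of elliptic curves; these curves are isogenous and have complex multiplication by orders in the same imaginary quadratic field. Concretely, if $T_A$ has Gram matrix $\begin{pmatrix}2a&b\\ b&2c\end{pmatrix}$, then one takes $E_1=\C/(\Z+\tau_1\Z)$ and $E_2=\C/(\Z+\tau_2\Z)$ with explicit $\tau_1,\tau_2$ in the upper half plane of the quadratic field of discriminant $b^2-4ac$. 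Shioda--Mitani verify that the resulting product has the prescribed transcendental lattice, and by the Torelli-type statement for singular abelian surfaces, $A\simeq E_1\times E_2$.

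Once $A\simeq E_1\times E_2$, the polarization is immediate. Take $L=p_1^*\cO_{E_1}(0)\otimes p_2^*\cO_{E_2}(0)$. Its class is $F_1+F_2$, where $F_1=\{0\}\times E_2$ and $F_2=E_1\times\{0\}$. Since $F_1^2=F_2^2=0$ and $F_1\cdot F_2=1$, we get $L^2=2$. The line bundle is ample because it is an exterior tensor product of ample bundles. Hence $h^0(L)=1$ by Lemma~\ref{lem:standard}(ii), so $L$ has type $(1,1)$, i.e.\ it is a principal polarization.

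The only real content is the appeal to Shioda--Mitani. If one wants to avoid the full classification, an alternative route is the following. Since $\rho=4$, the surface $A$ is isogenous to $E\times E$ with $E$ a CM curve, and one must then show that $A$ itself contains an elliptic curve $F$ together with an ample $H$ satisfying $H\cdot F=1$, so that Lemma~\ref{lem:standard}(iv) applies. This is equivalent to $\NS(A)$ containing a hyperbolic plane $U$ generated by two elliptic classes. That lattice statement is the expected obstacle: one would need the even lattice $\NS(A)$ of signature $(1,3)$ to split off $U$. This splitting is exactly what Shioda--Mitani's explicit construction delivers, so we will simply cite their theorem rather than reprove the lattice splitting.
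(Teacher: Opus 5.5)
Your proposal is correct and is essentially the paper's proof: both invoke the Shioda--Mitani classification to write $A\simeq E_1\times E_2$ with mutually isogenous CM factors, then take the product principal polarization $p_1^*\cO_{E_1}(0)\otimes p_2^*\cO_{E_2}(0)$; your check that $L^2=2$ and $h^0(L)=1$ is a harmless extra. The lattice-splitting detour in your last paragraph is unnecessary, though it does hold: Nikulin's uniqueness theorem gives $\NS(A)\simeq U\oplus T_A(-1)$, so $\NS(A)$ primitively represents $2$ and Lemma~\ref{lem:primitive-type} would apply directly.
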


\begin{proof}
An abelian surface of Picard number four is singular in the terminology
of Shioda--Mitani.  Their classification gives elliptic curves
\(E_1,E_2\) and an isomorphism of complex abelian varieties
\[
                         f:A\xrightarrow{\sim}E_1\times E_2;
\]
see \cite[p.~259 and Theorem~3.1]{ShiodaMitani}.  More precisely,
the two factors may be chosen mutually isogenous and with complex
multiplication.  If \(p_i\) are the projections, then
\[
 \cL_{\mathrm{prod}}
 =p_1^*\cO_{E_1}(0_{E_1})\otimes
  p_2^*\cO_{E_2}(0_{E_2})
\]
is the product principal polarization.  Thus
\(f^*\cL_{\mathrm{prod}}\) is a polarization of type \((1,1)\) on
\(A\).
\end{proof}

\begin{theorem}\label{thm:necessary}
Let \(A\) be a complex abelian surface.  If there is a dominant
rational map \(A\dashrightarrow\PP^2\) of degree three, then \(A\)
admits a polarization of type \((1,1)\), \((1,2)\), \((1,3)\), or
\((1,6)\).
\end{theorem}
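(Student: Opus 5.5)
The plan is to split according to the Picard number $\rho(A)\in\{1,2,3,4\}$. For $\rho(A)=1$ the statement is Corollary~\ref{prop:rank-one-necessity}. For $\rho(A)=4$ it is Lemma~\ref{lem:rho-four-principal}. So the content lies in $\rho(A)\in\{2,3\}$. Here Proposition~\ref{prop:cyclic-low-rank} excludes the cyclic case. Hence $L/K$ is non-Galois and the full constraints of Proposition~\ref{prop:hodge-tensors} are available. In both ranks the goal is a \emph{primitive} class $D\in\NS(A)$ with $D^2\in\{2,4,6,12\}$. Lemma~\ref{lem:primitive-type} then turns such a $D$ into a polarization of type $(1,1)$, $(1,2)$, $(1,3)$ or $(1,6)$.

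The first step is spectral. I would set $T=BQ$, an integral endomorphism of $\Lambda$ that is self-adjoint for $Q$. Put $M=QT=QBQ$. The condition $3Q-QBQ\preceq0$ says that $Q(T-3I)$ is positive semidefinite. I would use this, together with the hyperbolic signature $(1,r-1)$ of $Q$, to show that $T$ is diagonalizable over $\R$ with a $Q$-orthogonal eigenbasis. This needs care, since self-adjoint operators for an indefinite form need not diagonalize. The eigenbasis has one $Q$-positive eigenvector $v_+$ with eigenvalue $\lambda\ge3$. The remaining eigenvectors are $Q$-negative, with eigenvalues $\mu_i\le3$.

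The constraint $\indp(M)=1$ is used up by $v_+$, because $v_+^tMv_+=\lambda\,v_+^tQv_+>0$. Hence $\mu_i\,v_i^tQv_i\le0$, which forces $\mu_i\ge0$. Combining this with $\lambda+\sum\mu_i=\tr(T)=12$ confines the spectrum to a compact region. Moreover, the characteristic polynomial of $T$ has integer coefficients. This leaves finitely many possible integral characteristic polynomials, up to the remaining lattice data.

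The second step converts the spectral data into lattice vectors. The model case is $B=h\otimes h$ for a class $h$. Then $\tr(BQ)=h^2=12$. If $h$ is not primitive, then $h=2h'$ with $h'^2=3$, which is impossible because the form is even. So $h$ is primitive of square $12$, giving type $(1,6)$. In general I would take integral eigenvectors, or saturations of eigenspaces, of $T$ for rational eigenvalues. When the eigenvalues are irrational I would instead use $\ker(T-\lambda)$ over a quadratic field, together with the sublattices $\operatorname{im}(T)$ and $\ker(T)$. I would compute $B$ on these and use the integrality of $B$ as a form on $\Lambda^*$. The aim is to show that some primitive positive class has square dividing $12$ and lying in $\{2,4,6,12\}$.

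For $\rho(A)=2$ the extra inequality $\det(Q+QBQ)\le0$ from \eqref{eq:rank-two-plus-determinant} enters. Write
\[
Q=\begin{pmatrix}2a&b\\ b&2c\end{pmatrix},\qquad \operatorname{disc}=b^2-4ac>0.
\]
The three integer entries of $B$, constrained by $\tr(BQ)=12$, $\indp(B)=1$, $3Q-QBQ\preceq0$ and the determinant inequality, should force $Q$ to represent $2$, $4$, $6$ or $12$ primitively. I would verify this by a reduction-theory case analysis on binary forms.

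For $\rho(A)=3$ I would try to reduce to the rank-two case. The idea is to restrict $B$ and $Q$ to the rank-two sublattice spanned by $v_+$ and a suitable eigenvector, or to $\operatorname{im}(T)$ when $T$ has a kernel. Alternatively, I would use the classical fact that an even indefinite ternary lattice represents small values.

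The main obstacle is this arithmetic extraction in rank three, and to a lesser extent in rank two. The inequalities pin down the real spectrum, but turning them into a primitive vector of the exact required square needs genuine integrality arguments. The first thing I would try is to show that $T$ has $3$ as an eigenvalue on $v_+$ with a rank-one positive part, so that $B\approx h\otimes h$. If that fails, I would fall back on an explicit finite enumeration of the admissible characteristic polynomials of $T$.
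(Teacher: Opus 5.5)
\textbf{There is a genuine gap: the arithmetic step that produces the primitive class is missing.}

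Your case split and your use of Propositions~\ref{prop:cyclic-low-rank} and~\ref{prop:hodge-tensors} agree with the paper. So does your spectral step: real spectrum, one eigenvalue $\lambda\ge3$, the others in $[0,3]$, trace $12$. Lemma~\ref{lem:spectral} proves essentially this, via $B-3Q^{-1}=XX^t$ and a comparison of $XY$ with $YX$, without needing full diagonalizability.

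But the proposal stops where the real content begins. You never produce the primitive class of square $2,4,6$ or $12$, and the routes you sketch would not do so as stated.

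\textbf{Rank two.} The missing idea is that the spectral bounds, combined with integrality of $\det B$, bound the discriminant of $Q$. Your own analysis gives $\det(BQ)=\mu(12-\mu)\in[0,27]$, while $\det(BQ)=\det B\cdot\det Q$ with $\det Q<0$. So there are two alternatives.
\begin{enumerate}[(i)]
\item $\det B=0$. Then $B=n\,vv^t$ with $v$ primitive, $n>0$ by $\indp(B)=1$, and $n\,v^tQv=12$. This is the correct version of your model case, and $v^tQv$ may be any of $2,4,6,12$, not only $12$.
\item $|\det Q|\le27$. Only here does a reduction-theory analysis become finite. The minimum $m$ of $f=\tfrac12v^tQv$ satisfies $m^2\le|\det Q|\le27$, so $m\le5$. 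The cases $m=4,5$ are settled by a short check, using that the trace identity forces the content of $f$ to divide $6$. This is Lemma~\ref{lem:binary-tensor}.
\end{enumerate}
Without this dichotomy, saying that the entries of $B$ ``should force'' a small primitive value is not yet an argument.

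\textbf{Rank three.} Here the gap is larger, and none of your three routes works.
\begin{itemize}
\item Restricting to the span of $v_+$ and another eigenvector fails. Eigenvectors of $T$ are in general irrational. When $\rk B=3$, the tensor $B$ does not lie in $\Sym^2$ of any rank-two sublattice, so neither its integrality nor $\tr(BQ)=12$ passes to a restriction.
\item The ``classical fact'' you fall back on is false without a determinant bound: $8\operatorname{diag}(1,-1,-1)$ is even of signature $(1,2)$ and represents only multiples of $8$.
\item Your last idea, that $v_+$ has eigenvalue $3$, is impossible, since $\lambda=12-\mu_1-\mu_2\ge6$.
\end{itemize}

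The paper instead splits by $s=\rk B$ with $M=\sat(\operatorname{im}B)$.
\begin{itemize}
\item For $s\le2$, the block form~\eqref{eq:block-forms} transfers all hypotheses to $M$. Rank one is handled as above, rank two by Lemma~\ref{lem:binary-tensor}.
\item For $s=3$, all eigenvalues are positive and $\det B\ge1$. Hence $\det Q\le\det(BQ)=xy(12-x-y)\le54$, where $x,y\le3$ are the two smaller eigenvalues.
\end{itemize}
Even then a genuine representation theorem is needed, namely Lemma~\ref{lem:ternary}: every even lattice of signature $(1,2)$ and determinant at most $54$ primitively represents $2,4,6$ or $12$. The paper proves it in three moves. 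It applies Minkowski's theorem to the majorant $2(h,x)^2/m-(x,x)$ to produce an isotropic vector. It puts $Q$ in a normal form with $\det Q=2kd^2$. It then argues with residues modulo $d\le5$.

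These are the steps your proposal would have to supply.
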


\begin{proof}
If \(\rho(A)=1\), apply
Corollary~\ref{prop:rank-one-necessity}.  If \(\rho(A)=4\), apply
Lemma~\ref{lem:rho-four-principal}, independently of the map.

Suppose \(\rho(A)=2\) or \(3\).  Proposition~\ref{prop:cyclic-low-rank}
excludes the cyclic case.  In the non-Galois case,
Propositions~\ref{prop:residual-class} and~\ref{prop:hodge-tensors}
provide an integral symmetric matrix \(B\) satisfying the hypotheses of
Lemma~\ref{lem:binary-tensor} in rank two and of
Proposition~\ref{prop:rank-three-arithmetic} in rank three.  In either
case, the corresponding lattice result supplies a primitive class
\(D\in\NS(A)\) with
\[
                         D^2\in\{2,4,6,12\}.
\]
Lemmas~\ref{lem:positive-is-ample} and~\ref{lem:primitive-type}
turn one of \(D,-D\) into one of the four asserted polarizations.
\end{proof}

\section{Two common cubic constructions}\label{sec:common}

\subsection{The cubic construction on a product}

Lee's preprint proves the following proposition by putting the two elliptic
curves into oppositely oriented cyclic plane-cubic normal forms, varying
these forms over the $j$-line, and using their coordinatewise product
sections \cite[Theorem~1.1]{Lee}.  The proof below is
slightly different: an order-$9$ point on each curve produces the required
tangential triangle intrinsically.

\begin{proposition}\label{prop:product}
For arbitrary complex elliptic curves $E$ and $E'$, there exists a
dominant rational map $E\times E'\dashrightarrow\PP^2$ of degree three.
\end{proposition}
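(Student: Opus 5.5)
The plan is to write down an explicit net of divisors of bidegree $(3,3)$ on $E\times E'$ and to show that its base scheme absorbs $15$ of the $18$ intersection points. Embed $E\subset\PP^2$ by $|3\cdot 0_E|$, so that three points are collinear exactly when they sum to zero. Choose a point $P\in E$ of exact order $9$. The tangent line at $P$ meets $E$ again at $-2P$, the tangent at $-2P$ meets it again at $4P$, and the tangent at $4P$ meets it again at $-8P=P$. These three tangent lines form the tangential triangle. Restricting their linear forms gives sections $s_1,s_2,s_3\in H^0(E,\cO_E(3\cdot0_E))$ with
\[
\divisor(s_1)=2P+(-2P),\qquad \divisor(s_2)=2(-2P)+4P,\qquad \divisor(s_3)=2(4P)+P.
\]
I will do the same on $E'$ with a point $P'$ of order $9$, but with the opposite orientation. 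Concretely, the roles of simple and double zeros are reversed under the index matching, so that $t_1,t_2,t_3$ have divisors $P'+2(-2P')$, $(-2P')+2(4P')$ and $(4P')+2P'$. The candidate map is
\[
\varphi=[s_1\boxtimes t_1:s_2\boxtimes t_2:s_3\boxtimes t_3]:E\times E'\dashrightarrow\PP^2,
\]
a subnet of $|p_1^*\cO(3\cdot0)\otimes p_2^*\cO(3\cdot0')|$. The self-intersection of this line bundle is $2\cdot3\cdot3=18$.

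First, I will determine the base locus. The three sections $s_i\boxtimes t_i$ cannot all vanish along a curve: for fixed $x\in E$, at most two of the $s_i(x)$ vanish, and the corresponding $t_j$ have no common zero. Hence the base locus is finite. It consists of the points $(x,y)$ with $x\in\{P,-2P,4P\}$ or $y\in\{P',-2P',4P'\}$ at which the remaining factors vanish; this gives a finite set of at most $9+$ a few points in the $3\times3$ grid. Second, I will compute the local contribution of each base point to $18-\deg\varphi$. Near each such point the three sections look like monomials $u^au'^b$ in local coordinates $u$ on $E$ and $u'$ on $E'$, so each contribution is the Samuel multiplicity of a monomial ideal. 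This is an explicit Newton-polygon computation: the contribution equals twice the area of the complement of the Newton polygon. The opposite orientation is chosen precisely so that these monomial ideals are as non-complete-intersection as possible, and I expect the total to be $15$. Then $\deg\varphi\cdot\deg(\text{image})=3$.

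Third, I will rule out degeneracies. The image is not a curve: if it were, $\varphi$ would factor through a pencil, and then the ratios $(s_1t_1)/(s_2t_2)$ and $(s_1t_1)/(s_3t_3)$ would be algebraically dependent. Their divisors are independent in $\NS(E\times E')$, so this is impossible. Hence $\varphi$ is dominant and generically finite of degree $3$. As a cross-check, the fibre over a general point $[a:b:c]$ is the intersection of the two curves $bs_1t_1=as_2t_2$ and $cs_1t_1=as_3t_3$ minus the base points.

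The main obstacle is the multiplicity count in the second step. I must verify carefully that every base point is a monomial point, meaning that no two of the involved zeros collide. This requires that $P,-2P,4P$ be distinct, which holds since $P$ has exact order $9$ and not order dividing $3$. I must also verify that the contributions add up to exactly $15$ and not less. If the naive orientation gives too little, I would try permuting the matching of the $t_i$ to the $s_i$, and I expect the opposite cyclic orientation to be the one that works. If monomial multiplicities prove awkward to justify, the fallback is to blow up the base points once or twice and compute $(\pi^*H-\sum m_iE_i)^2=3$ directly.
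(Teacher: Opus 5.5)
Your construction is essentially the paper's: tangential triangles of an order-nine point on each factor, opposite cyclic orientations, and the net of product sections, with the count $18-3\cdot4-3\cdot1=3$. Your orientation is the right one. As written, however, the dominance step is invalid and the central multiplicity computation is left as an expectation.

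\textbf{The dominance step.} The divisors of the rational functions $(s_1t_1)/(s_2t_2)$ and $(s_1t_1)/(s_3t_3)$ are principal, hence zero in $\NS(E\times E')$. So ``independence in $\NS$'' says nothing and does not exclude a net composed with a pencil. The fix is to carry out the computation you postpone, which is immediate from order tables.
\begin{itemize}
\item Put $P_1,P_2,P_3=P,-2P,4P$ and $P'_1,P'_2,P'_3=P',-2P',4P'$.
\item The vanishing orders of $(s_1,s_2,s_3)$ at $P_1,P_2,P_3$ are $(2,0,1),(1,2,0),(0,1,2)$.
\item The vanishing orders of $(t_1,t_2,t_3)$ at $P'_1,P'_2,P'_3$ are $(1,0,2),(2,1,0),(0,2,1)$.
\item Hence the base scheme is supported exactly at the six points $(P_i,P'_j)$ with $i\neq j$. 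The ideal is $(u,v)^2$ at $(P_1,P'_3),(P_2,P'_1),(P_3,P'_2)$ and $(u,v)$ at the other three.
\end{itemize}
Therefore a single blow-up resolves the net, and the moving part $H$ is base-point-free with $H^2=18-12-3=3>0$. A base-point-free system composed with a pencil has square zero, so the resolved morphism is dominant of degree $H^2=3$. Once $H^2>0$ is known, your separate third step is unnecessary. This is exactly the paper's argument, and no Newton-polygon machinery is needed.

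\textbf{A smaller slip.} Your divisors for the $t_i$ do not sum to zero; for example $P'+2(-2P')$ sums to $-3P'\neq0$. So they are not cut out by lines in the $|3[0_{E'}]|$ model, and the net is not in $|p_1^*\cO(3[0_E])\otimes p_2^*\cO(3[0_{E'}])|$. All three divisors do sum to $6P'=-3P'$, so they are sections of the single degree-three bundle $\cO_{E'}([P']+2[-2P'])$, and nothing numerical changes. Alternatively, interchange the roles of $-2P'$ and $4P'$. This gives genuine tangent-line sections of $\cO_{E'}(3[0_{E'}])$ with the same order tables, and it is what ``opposite orientation'' means in the paper.
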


\begin{proof}
Embed $E$ in $\PP^2$ by the complete linear system $|3[0_E]|$.
Choose $p\in E$ of exact order nine and put
$P_1=p$, $P_2=-2p$, and $P_3=4p$.
The tangent line at $P_i$ meets the cubic again at $P_{i+1}$, with indices
modulo three, because the third point on the tangent at $x$ is $-2x$ and
$-8p=p$.  The points $P_1,P_2,P_3$ are distinct and noncollinear: their
sum is $3p\neq0$.  After a projective change of coordinates, take them to
the coordinate vertices.  If $x_1,x_2,x_3$ are the corresponding
coordinate sections of $\cO_E(3[0_E])$, then
\begin{equation}
\begin{aligned}
 \divisor(x_1)&=2P_2+P_3,\\
 \divisor(x_2)&=2P_3+P_1,\\
 \divisor(x_3)&=2P_1+P_2.
\end{aligned}
\label{eq:E-orders}
\end{equation}

Perform the same construction on $E'$, with the cyclic tangential triangle
labelled in the opposite orientation.  For vertices $Q_1,Q_2,Q_3$ and
coordinate sections $y_1,y_2,y_3$ of $\cO_{E'}(3[0_{E'}])$, arrange
\begin{equation}
\begin{aligned}
 \divisor(y_1)&=Q_2+2Q_3,\\
 \divisor(y_2)&=2Q_1+Q_3,\\
 \divisor(y_3)&=Q_1+2Q_2.
\end{aligned}
\label{eq:Eprime-orders}
\end{equation}
Consider the linear system
\[
  [x_1\boxtimes y_1:x_2\boxtimes y_2:x_3\boxtimes y_3]
  :E\times E'\dashrightarrow\PP^2.
\]
The three sections lie in
$N:=\cO_E(3[0_E])\boxtimes\cO_{E'}(3[0_{E'}])$, whose square is
$N^2=2\cdot3\cdot3=18$.

The base support is precisely $\{(P_i,Q_j):i\neq j\}$.
Indeed, suppose that $(z,z')\in E\times E'$ is a base point.  If
$z\notin\{P_1,P_2,P_3\}$, then all three $x_i(z)$ are nonzero, because
the divisors in \eqref{eq:E-orders} are supported on the $P_i$.
Consequently, we have $y_1(z')=y_2(z')=y_3(z')=0$, contradicting the
base-point-freeness of $|3[0_{E'}]|$.  It follows that $z=P_i$ for some
$i$.
Since $x_i(P_i)\neq0$, one must have $y_i(z')=0$.  By
\eqref{eq:Eprime-orders}, this means that $z'=Q_j$ for some $j\neq i$.
Conversely, the order tables show that every such off-diagonal pair is a
base point, while at $(P_i,Q_i)$ the $i$-th coordinate is nonzero.
In local parameters $u,v$
on $E,E'$, respectively, the exact base ideals are
\begin{equation}
\begin{array}{c|c}
\text{base points}&\text{base ideal}\\ \hline
(P_1,Q_3),(P_2,Q_1),(P_3,Q_2)&(u^2,uv,v^2)=(u,v)^2,\\
(P_1,Q_2),(P_2,Q_3),(P_3,Q_1)&(u,v).
\end{array}
\label{eq:product-ideals}
\end{equation}

Let $\mu:\widetilde{E\times E'}\to E\times E'$ be the blow-up of these
six points, and denote by $E_{ij}$ the exceptional
curve over $(P_i,Q_j)$.  Because the local ideals in
\eqref{eq:product-ideals} are the full powers $(u,v)^2$ and $(u,v)$, this
single blow-up resolves the linear system with no residual infinitely-near base
point.  Its moving divisor is
$H=\mu^*N-2(E_{13}+E_{21}+E_{32})
-(E_{12}+E_{23}+E_{31})$, and
\[
  H^2=18-3\cdot2^2-3\cdot1^2=3.
\]
The base-point-free divisor $H$ defines a morphism
$\widetilde f:\widetilde{E\times E'}\to\PP^2$.
If its image were a curve, then $H$ would be pulled back from that curve
and $H^2$ would be zero.  Hence the morphism $\widetilde f$ is dominant.
Since
$H=\widetilde f^*\cO_{\PP^2}(1)$, one has
$\deg(\widetilde f)=H^2=3$.
The corresponding rational map from $E\times E'$ has degree three.
\end{proof}

\subsection{The good-torsion theta-product linear system}
\label{subsec:good-torsion}

\begin{definition}\label{def:good-torsion}
Let $(B,\Theta)$ be an indecomposable principally polarized abelian
surface and let $C\subset B$ be a smooth symmetric theta divisor.  A
point $0\ne t\in B[3]$ is \emph{good} (with respect to $C$) if
$t\notin[2]_B(C)$.  Equivalently, there is no $x\in C$ with $2x=t$.
Because two symmetric theta divisors differ by a two-torsion translation,
the condition is independent of the chosen symmetric representative.
\end{definition}

The significance of goodness is that it keeps the six base points in the
construction below distinct and forces exactly three double and three
simple base points.  The proposition therefore supplies not merely a
cubic map, but an equivariant cubic with a faithful target action.  This is
the input both for the principally polarized case and for cyclic descent
in type $(1,3)$.

\begin{proposition}\label{prop:good-theta}
Let $(B,\Theta)$ be an indecomposable principally polarized complex
abelian surface.  Let $C\subset B$ be a smooth symmetric theta divisor,
and let $0\neq t\in B[3]$ satisfy $t\notin[2]_B(C)$.  Then there is a
dominant rational map $f_t:B\dashrightarrow\PP^2$
of degree three which is equivariant for the translation
$T_t:B\to B$ and a faithful projective action of order three on $\PP^2$.
\end{proposition}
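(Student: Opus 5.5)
The plan is to realize $f_t$ as the map given by a three-dimensional space of sections of a line bundle $N$ numerically equivalent to $3\Theta$. Its class satisfies $N^2=18$ and $K(N)=B[3]\ni t$. Translation by $t$ should permute the three sections cyclically, up to scalars and a linearization. The degree would then be computed exactly as in the proof of Proposition~\ref{prop:product}: if the base scheme consists of three points with ideal $(u,v)^2$ and three with ideal $(u,v)$, then after blowing them up the moving part $H$ satisfies $H^2=18-3\cdot 4-3\cdot 1=3$.

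\textbf{Construction.} For $z\in B$ let $\theta_z$ denote a section of $\cO_B(C+z)$ with divisor $C+z$. Fix auxiliary points $a,b\in B$, to be chosen generically, and use the six translates $C+a+kt$ and $C+b+kt$ for $k\in\Z/3$. Define
\[
 x_k=\theta_{a+kt}\,\theta_{a+(k+1)t}\,\theta_{b+kt}\qquad(k\in\Z/3).
\]
By the theorem of the square, each $x_k$ is a section of the same bundle $N$, because the translation vectors sum to $2a+b+(3k+1)t=2a+b+t$, which is independent of $k$. Since $t\in K(N)$, we have $T_t^*N\simeq N$. Pulling back by $T_t$ sends $x_k$ to a nonzero multiple of $x_{k-1}$, so after rescaling, $T_t$ acts on $\PP^2$ by a cyclic permutation of coordinates. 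This action is faithful of order three, which gives the asserted equivariance.

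\textbf{Base locus.} A point lies in the base locus iff it kills all three $x_k$. Two translates of $C$ meet in $C^2=2$ points. The base points arise from pairwise intersections $(C+p)\cap(C+q)$ of the six curves chosen so that each of the three products contains one of the two curves. I would enumerate these incidences: triples of the $a$-translates, triple points, and mixed $a$/$b$ intersections. Then I would show that for generic $a,b$ all intersections not forced by the combinatorics are empty, for instance because a point lying on three translates imposes a closed condition on $(a,b)$. What remains should be six points. At three of them, two branches lie in one product and a different branch in another, so the ideal is $(u,v)^2$; at the other three the ideal is $(u,v)$. I expect the hypothesis $t\notin[2]_B(C)$ to enter here. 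The collisions $C+p$ tangent to or meeting $C+q$ at a common point with $p-q$ a multiple of $t$ are governed by points $x$ with $x\in C$ and $-x+\text{const}\in C$, since $C$ is symmetric. Goodness is exactly what prevents two of the six base points from coinciding and prevents a tangency from raising the multiplicity. Smoothness of $C$, which holds because $\Theta$ is indecomposable, gives transversality at the simple points.

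\textbf{Main obstacle and dominance.} The main obstacle is the precise local analysis of the base ideals. It must show that there is no infinitely-near base point and no fixed component; the latter holds because the $x_k$ have no common curve when the six translates are distinct, which follows since $3t\ne 0$ and $t\neq0$. Once the ideals are known, a single blow-up resolves the system and gives $H^2=3$. Dominance follows as before: if the image were a curve, $H$ would be pulled back from it and $H^2=0$. Hence $\deg f_t=3$.
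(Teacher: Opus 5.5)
\textbf{There is a genuine gap: for generic $a,b$ your system has degree six, not three.}

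Write $P_k=C+a+kt$ and $R_k=C+b+kt$, so $x_k$ vanishes on $P_k\cup P_{k+1}\cup R_k$.
\begin{enumerate}[(i)]
\item Every point of $P_k\cap P_{k+1}$ is a base point. This gives six points, and since $t$ is good each intersection is transverse.
\item Every point of $P_k\cap R_{k+1}$ is also a base point. On $P_0$ alone, $x_0$ and $x_2$ vanish, and $x_1$ vanishes there exactly on $R_1$. These mixed incidences are forced by the combinatorics, so they do not disappear for generic $b-a$. They give six further points, disjoint from the first six.
\end{enumerate}
At each of the twelve points two of the $x_k$ have transverse linear terms. So the base scheme is twelve reduced points, the moving part has $H^2=18-12=6$, and $f$ has degree $6$.

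Your description of the $(u,v)^2$ points is also too weak. For all three $x_k$ to vanish to order two at a point of $P_0\cap P_1$, the point must also lie on $R_1\cap R_2$. That is, it must lie on four of the six curves. This singles out finitely many values of $b-a$, so it never happens generically.

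\textbf{The missing idea is a special, non-generic choice of the second triple.} Normalize $a=0$ and choose $p\in C\cap(C+t)$. Put $q=p-t$, $r=p+t$, $d=p+q$, and use the translates $C+d+it$ (in your notation, $b=d-t$).

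Because $C$ is symmetric, for $x,y\in C$ with $C\neq C+x+y$ one has $C\cap(C+x+y)=\{x,y\}$, or $2[x]$ when $x=y$. This rule puts each of $p,q,r$ on four of the six curves, with tangencies such that the three sections span $\mathfrak m^2/\mathfrak m^3$ there. It also absorbs all six mixed base points into $\{p,q,r\}$. The result is base ideals $\mathfrak m^2$ at $p,q,r$ and $\mathfrak m$ at $-p,-q,-r$, hence $H^2=18-3\cdot4-3\cdot1=3$.

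Goodness of $t$ enters in two places here:
\begin{enumerate}[(i)]
\item It gives $d\notin\{0,\pm t\}$, so the six curves are distinct and there is no fixed component. Your justification via ``$3t\neq0$'' is wrong, since $3t=0$.
\item It makes $p,q,r,-p,-q,-r$ pairwise distinct.
\end{enumerate}
With this choice, your equivariance argument (translation vectors summing to a constant, $T_t^*$ cycling the sections) and your final blow-up and degree computation go through as you wrote them.
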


\begin{proof}
Choose $p\in C\cap(C+t)$ and put
\[
  q=p-t,\qquad r=p+t=q-t,\qquad d=p+q.
\]
Then $p,q\in C$.  Moreover, the element $d$ satisfies
\begin{equation}
  d\notin\{0,t,-t\},
\label{eq:d-not}
\end{equation}
because these three equalities would respectively give
$t=2p$, $t=2(-p)$, or $t=2q$, contradicting
$t\notin[2]_B(C)$.

With indices in $\Z/3\Z$, define
\[
  B_i=C+it,\qquad A_i=C+d+it.
\]
Since $C$ represents a principal polarization,
$\operatorname{Stab}(C)\subseteq K(\cO_B(C))=\{0_B\}$.
It follows that the three $B_i$ are pairwise distinct, as are the three
$A_i$.  If $A_i=B_j$, then
$d+(i-j)t\in\operatorname{Stab}(C)$, and hence
$d\in\langle t\rangle=\{0,t,-t\}$, contradicting
\eqref{eq:d-not}.  Thus all six curves are pairwise distinct.
Let $b_i$ and $a_i$ be canonical sections cutting out $B_i$ and $A_i$.
Put $L_i=\cO_B(B_i)$ and
$Q=\phi_{\cO_B(C)}(d)\in\Pic^0(B)$.  By
\eqref{eq:translate-convention} and the theorem of the square,
$\cO_B(A_i)\simeq L_i\otimes Q^{-1}$.
Consequently, the products
\[
  s_i:=a_i b_{i+1}b_{i+2}\qquad(i\in\Z/3\Z)
\]
are sections of the same line bundle
\[
  M=(L_0\otimes L_1\otimes L_2)\otimes Q^{-1}.
\]
Numerically,
\[
  c_1(M)=3[\Theta],\qquad M^2=18.
\]
The sections are linearly independent: restricting a relation to $B_i$
leaves only the restriction of $s_i$, which is not identically zero.

We use the following intersection rule.  If $x,y\in C$ and
$C\neq C+x+y$, then, scheme-theoretically,
\begin{equation}
 C\cap(C+x+y)=
 \begin{cases}
   \{x,y\},&x\neq y,\\
   2[x],&x=y.
 \end{cases}
\label{eq:theta-intersection}
\end{equation}
Indeed, $C=-C$, so the displayed points lie in the intersection, whose
total length is $C^2=2$.  In the double case the involution
$z\mapsto2x-z$ identifies the two tangent lines at $x$.
Applying \eqref{eq:theta-intersection} gives
\begin{equation}
\begin{array}{c|ccc}
 &B_0&B_1&B_2\\ \hline
A_0&\{p,q\}&2[p]&2[q]\\
A_1&2[p]&\{p,r\}&2[r]\\
A_2&2[q]&2[r]&\{q,r\}
\end{array}
\label{eq:A-B-table}
\end{equation}
and
\begin{equation}
\begin{aligned}
 B_0\cap B_1&=\{p,-q\},\\
 B_0\cap B_2&=\{q,-p\},\\
 B_1\cap B_2&=\{r,-r\}.
\end{aligned}
\label{eq:B-B-table}
\end{equation}
The six points
\begin{equation}
  p,q,r,-p,-q,-r
\label{eq:six-theta-points}
\end{equation}
are pairwise distinct.  Indeed, the positive points form the
$\langle t\rangle$-orbit of $p$; if this orbit met its negative, then
$2p\in\langle t\rangle$, and one of $p,-p,p-t\in C$ would have double
$t$, again contradicting goodness.

Equations \eqref{eq:A-B-table}--\eqref{eq:B-B-table} show that
\eqref{eq:six-theta-points} is the complete base support of the linear system.
A point on at least two $B_i$ is a base point.  A point on exactly one
$B_i$ is a base point precisely when it also lies on the corresponding
$A_i$.  A point on none of the $B_i$ would lie on all three $A_i$; its
three-point $t$-orbit would then lie in $A_0\cap A_1$, contradicting
$A_0\cdot A_1=2$.

At each of $p,q,r$, the exact base ideal is $\mathfrak m^2$.  For example,
at $p$, take regular parameters $x,y$ cutting out $B_0,B_1$.  Up to
units,
$(s_0,s_1,s_2)\equiv(y^2,x^2,xy)\pmod{\mathfrak m_p^3}$.
Thus the base ideal $I_p$ satisfies
$I_p+\mathfrak m_p^3=\mathfrak m_p^2$ and
$I_p\subseteq\mathfrak m_p^2$; Nakayama's lemma applied to
$\mathfrak m_p^2/I_p$ gives $I_p=\mathfrak m_p^2$.
At each of $-p,-q,-r$, exactly two $B_i$ meet transversely and no $A_i$
passes through the point, so the exact base ideal is $\mathfrak m$.

Let $\mu_t:X_t\to B$ be the blow-up of the six points, and write $E_z$
for the exceptional curve over $z$.  The moving divisor is
$H_t=\mu_t^*M-2(E_p+E_q+E_r)
-(E_{-p}+E_{-q}+E_{-r})$.  It is base-point-free and
$H_t^2=18-3\cdot2^2-3\cdot1^2=3$.
The induced morphism $X_t\to\PP^2$ is dominant, since a globally generated
divisor defining a map to a curve has square zero, and its degree is
$H_t^2=3$.

Finally, set $D_i=A_i+B_{i+1}+B_{i+2}=\divisor(s_i)$.
Pullback by $T_t$ gives $T_t^*D_i=D_{i-1}$.  Hence translation by $t$
cyclically permutes the three coordinate lines of the linear system, up to nonzero
scalars.  The resulting weighted three-cycle has exact order three in
$\operatorname{PGL}_3(\C)$, so the target action is faithful.
\end{proof}

\section{Principal polarizations}\label{sec:11}

\subsection{The principal-polarization dichotomy}

The following dichotomy is well known.

\begin{proposition}\label{prop:ppas-dichotomy}
Let $(A,\Theta)$ be a principally polarized complex abelian surface.
Exactly one of the following holds.
\begin{enumerate}[(a)]
\item There are elliptic curves $E,E'$ and an isomorphism
      $(E\times E',E\times\{0_{E'}\}+\{0_E\}\times E')
      \simeq(A,\Theta)$ of polarized abelian varieties.
\item The polarization is indecomposable, and $(A,\Theta)$ is the
      canonically principally polarized Jacobian of a smooth genus-two
      curve.  In this case $\Theta$ has a smooth symmetric representative
      $C\subset A$ containing $0_A$.
\end{enumerate}
\end{proposition}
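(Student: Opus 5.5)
The plan is to look at the effective divisors in the class of $\Theta$ and ask whether one of them is irreducible. Since $\Theta^2=2$ and $h^0(A,\Theta)=1$ by Lemma~\ref{lem:standard}(ii), the class contains a unique effective divisor $D$ up to translation. Replacing $\Theta$ by a translate, we may take $D$ symmetric, because symmetric representatives exist and differ by two-torsion translations. There are two cases.

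\emph{Case 1: $D$ is reducible.} Write $D=\sum n_iC_i$. Every $C_i$ is nef by Lemma~\ref{lem:standard}(i), and $D^2=2$ with all pairwise intersections nonnegative. We argue that $D=E+F$ for two elliptic curves with $E\cdot F=1$. If some component had $C_i^2>0$, then $C_i^2\geq 2$ because the form is even. Lemma~\ref{lem:positive-is-ample} together with $h^0\geq1$ would then make $C_i$ ample with $D-C_i$ effective. Squaring gives $D^2\geq C_i^2+2C_i\cdot(D-C_i)\geq 2+2(D-C_i)\cdot C_i$, so $(D-C_i)\cdot C_i=0$. An effective nonzero divisor has positive intersection with an ample class, so $D-C_i=0$, contradicting reducibility. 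Hence every $C_i^2=0$, and each $C_i$ is a smooth elliptic curve, as in the proof of Lemma~\ref{lem:standard}(iii). The equation $2=\sum n_in_jC_i\cdot C_j$ with zero diagonal forces exactly two components with multiplicity one and $E\cdot F=1$; components in a single direction would give $D^2=0$. Lemma~\ref{lem:standard}(iv) applied to $H=\Theta$ and $F$ gives $A\simeq E\times\ker(r)$. The splitting in its proof identifies $\ker(r)$ with a translate of $F$, and after translation $\Theta$ becomes $E\times\{0\}+\{0\}\times F$. This is alternative (a).

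\emph{Case 2: $D$ is irreducible.} Adjunction gives $p_a(D)=D^2/2+1=2$. The curve $D$ cannot be elliptic, since an elliptic curve has square zero, and abelian surfaces contain no rational curves. So $D$ is smooth of genus two. By the universal property of the Jacobian, $J(D)\to A$ is surjective and pulls $\Theta$ back to a class of square two. Since both are principal, this map is an isomorphism of polarized varieties; this is the standard Torelli/Matsusaka–Ran statement for curves with $C^2=2$. The polarization is indecomposable in this case, because a decomposition would put the reducible divisor $E+F$ in the same class, contradicting uniqueness of $D$ up to translation. Finally, the symmetric translate of $D$ by a suitable point, namely the image of $D$ under an Abel–Jacobi map based at a Weierstrass point, is a smooth symmetric $C$ containing $0_A$.

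For exclusivity: in case (a) the effective representatives are reducible, while in case (b) they are irreducible, and representatives are unique up to translation.

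The main obstacle is the identification $(A,\Theta)\simeq J(C)$ in Case 2. I would invoke Matsusaka–Ran, or argue directly that $J(C)\to A$ has degree $\Theta^2/2=1$, comparing the pullback of $\Theta$ with the canonical theta divisor $W_1$.
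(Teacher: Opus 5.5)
Your proposal is correct in substance, but it takes a different route from the paper. The paper's proof is only a citation of the standard classification \cite[Corollary~11.8.2(a)]{BL}, plus the remark that an Abel--Jacobi embedding based at a Weierstrass point gives a smooth symmetric theta divisor through $0_A$. Your final step in Case~2 is exactly that remark. You instead rebuild the dichotomy from the paper's own Lemma~\ref{lem:standard}. The effective divisor $D\equiv\Theta$ is unique up to translation, and it is either $E+F$ with $E\cdot F=1$ (your nef/ample squeeze rules out a component of positive square), or an irreducible curve of arithmetic genus two. In the second case $D$ is smooth, since there are no rational curves and the image of an elliptic curve is a translate of an elliptic subgroup, which has square zero. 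This makes the statement essentially self-contained and shows where the two cases come from. What you still import is the identification $(A,\Theta)\simeq(J(D),W_1)$, via Matsusaka--Ran or a degree count. The citation buys brevity but leaves the whole statement a black box.

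Three places need tightening.

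\emph{The Jacobian step in Case~2.} The sentence saying that $\alpha\colon J(D)\to A$ pulls $\Theta$ back to a class of square two is circular, because $(\alpha^*\Theta)^2=2\deg\alpha$. Use instead the comparison with $W_1$ that you mention at the end. Normalize $\alpha$ by the universal property so that it maps $W_1$ isomorphically onto a translate of $D$, and replace $D$ by that translate. Then $\alpha^*D=\sum_{x\in\ker\alpha}(W_1+x)\equiv(\deg\alpha)\,W_1$. Squaring gives $2\deg\alpha=2(\deg\alpha)^2$, so $\deg\alpha=1$; this uses Riemann's theorem that $W_1$ is a theta divisor. Torelli plays no role here.

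\emph{The product decomposition in Case~1.} The labels are swapped. Lemma~\ref{lem:standard}(iv) applied to $F$ gives a retraction $r\colon A\to F$, and it is $\ker(r)$ that equals the translate $E_0$ of $E$ through the origin. This identification is not in the lemma's proof; it needs the check that $T_x^*\Theta\otimes\Theta^{-1}$ restricts trivially to $F$ for $x\in E_0$. You can avoid (iv) altogether. The intersection $E_0\cap F_0$ is a subgroup scheme of length $E\cdot F=1$, so the addition map $E_0\times F_0\to A$ is an isomorphism. It sends $E_0\times\{0\}+\{0\}\times F_0$ to $E_0+F_0$, which is a translate of $D$.

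\emph{Exactly two components.} Say explicitly why there is no third component. From $D^2=2$, the only pair of components with nonzero intersection is $\{E,F\}$. Any further component would therefore have zero intersection with both $E$ and $F$, hence be parallel to both. That is impossible because $E\cdot F=1$.
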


\begin{proof}
This is the standard classification of principally polarized abelian
surfaces \cite[Corollary~11.8.2(a)]{BL}.  In the indecomposable case, choosing
a Weierstrass point as base point for the Abel--Jacobi embedding gives a
smooth symmetric theta divisor containing the origin.
\end{proof}

\subsection{Existence of a good three-torsion point}

\begin{lemma}\label{lem:good-point}
Let $(A,\Theta)$ be an indecomposable principally polarized abelian
surface and let $C\subset A$ be the symmetric genus-two theta divisor from
Proposition~\ref{prop:ppas-dichotomy}.  There exists
$0\neq t\in A[3]$ with $t\notin[2]_A(C)$.
\end{lemma}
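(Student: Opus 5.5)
The plan is to show that only a few of the $80$ nonzero points of $A[3]$ can be bad, by bounding the number of points of $C$ whose double is a nonzero $3$-torsion point. I would use the genus-two curve model from Proposition~\ref{prop:ppas-dichotomy}(b) and a Plücker count for the linear series $|3K_C|$.

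\textbf{Step 1: translate badness into a condition on $C$.} Let $w\in C$ be a Weierstrass point and embed $C\hookrightarrow A=\operatorname{Pic}^0(C)$ by $P\mapsto\cO_C(P-w)$. This is the symmetric representative containing $0_A$, since $-(P-w)\sim\iota P-w$ for the hyperelliptic involution $\iota$. Goodness does not depend on the symmetric representative (Definition~\ref{def:good-torsion}), so I may work with this one. A nonzero $t\in A[3]$ is bad exactly when $t=2(P-w)$ for some $P\in C$. Since $2w\sim K_C$, this says $2P\sim K_C+t$. Such a $P$ satisfies $6(P-w)\sim 0$, so $6P\sim 6w\sim 3K_C$, and $P$ is not a Weierstrass point, because $2P\sim K_C$ would force $t=0$. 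Each such $P$ determines $t=2(P-w)$. Hence the number of bad $t$ is at most the number of non-Weierstrass points $P\in C$ with $6P\in|3K_C|$.

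\textbf{Step 2: count these points.} The series $|3K_C|$ is complete, of degree $6$ and dimension $r=4$, since $h^0=6-1=5$ by Riemann--Roch. By the Plücker formula, the total inflection weight of this $g^4_6$ on a genus-two curve is
\[
(r+1)d+(r+1)r(g-1)=30+20=50 .
\]
At a point $P$ with $6P\in|3K_C|$, the vanishing sequence is $0<a_1<a_2<a_3<a_4$ with $a_4=6$. Its weight is therefore at least $(0+1+2+3+6)-(0+1+2+3+4)=2$. All inflection weights are nonnegative, so there are at most $25$ such points. Even without discarding the six Weierstrass points, at most $25<80$ nonzero $t\in A[3]$ are bad, and a good $t$ exists.

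\textbf{Main obstacle.} The delicate part is Step 1: checking carefully that $t\in[2]_A(C)$ is equivalent to $2P\sim K_C+t$, with the sign and translation conventions matching the Abel--Jacobi embedding. Step 2 only uses the weight bound at a point where some section vanishes to order $6$. If the Plücker count felt heavy, a fallback would be an intersection argument. The image $[2]_A(C)$ is the image of a generically injective map, since $C\cap(C+e)$ is finite for $e\neq0$. Its class is therefore $[2]_{A*}[C]=4[\Theta]$. One would then bound the number of $3$-torsion points on this curve, but that seems less clean than the $|3K_C|$ count.
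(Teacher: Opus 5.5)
Your proof is correct, but it takes a different route from the paper's.

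\textbf{The paper's argument.} The paper stays inside $A$. It sets $\Gamma=[2]_A(C)$ and shows that $[2]_A|_C$ is birational onto $\Gamma$, using that the translation stabilizer of $C$ is trivial. It then computes $\Gamma\equiv4\Theta$ and $\Gamma^2=32$. Assuming every nonzero point of $A[3]$ is bad, it fixes $0\neq a\in A[3]$ and notes that the $79$ points of $A[3]\setminus\{0,a\}$ all lie in $\Gamma\cap(\Gamma+a)$. The case $\Gamma=\Gamma+a$ is excluded because $T_a$ would lift through the normalization $C\to\Gamma$ to a fixed-point-free automorphism of order three of a genus-two curve, which Riemann--Hurwitz forbids. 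This is essentially the intersection-theoretic fallback you sketched at the end. The translate trick $\Gamma\cap(\Gamma+a)$ is what turns ``count $3$-torsion points on $\Gamma$'' into a clean contradiction.

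\textbf{Your argument.} You work intrinsically on the curve. Badness becomes $2P\sim K_C+t$ for a non-Weierstrass point $P$, which forces a section of $|3K_C|$ vanishing to order six at $P$. The Pl\"ucker count then bounds the number of such points: the total weight is $50$ and each such point has weight at least $2$.

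\textbf{Comparison.} Your route gives an explicit quantitative bound and avoids both the birationality of $[2]_A|_C$ and the Riemann--Hurwitz step. Your bound also sharpens easily. At a Weierstrass point the vanishing sequence of $|3K_C|$ is $(0,1,2,4,6)$, of weight $3$. At a non-Weierstrass point with $6P\sim3K_C$ it is exactly $(0,1,2,3,6)$, of weight $2$. So at most $(50-18)/2=16$ nonzero $3$-torsion points are bad. The paper's argument yields only existence, by contradiction with ``all $80$ are bad.'' In exchange, it uses nothing beyond intersection numbers on $A$ and needs neither the Abel--Jacobi model nor the Pl\"ucker formula.
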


\begin{proof}
Let $\Gamma=[2]_A(C)$ with its reduced curve structure.  The restriction
$[2]_A|_C:C\to\Gamma$ is birational.  Indeed, if it had generic degree
greater than one, then for general $x\in C$ there would be
$0\neq\eta\in A[2]$ such that $x+\eta\in C$.  Since $A[2]$ is finite,
one fixed nonzero $\eta$ would satisfy $C+\eta=C$.  This is impossible
because the translation stabilizer of $C$ is contained in
$K(\cO_A(C))=\{0_A\}$.

The projection formula for
$[2]_A:A\to A$ gives
\begin{equation}
  \Gamma\equiv4\Theta,\qquad \Gamma^2=32.
\label{eq:Gamma-square}
\end{equation}
Suppose that every nonzero point of $A[3]$ belongs to $\Gamma$.  Fix
$0\neq a\in A[3]$.  The curves $\Gamma$ and $\Gamma+a$ are distinct.
Otherwise $T_a:\Gamma\to\Gamma$ would lift through the normalization
$C\to\Gamma$ to a fixed-point-free automorphism of order three of the
genus-two curve $C$, contradicting Riemann--Hurwitz.

For every $x\in A[3]\setminus\{0_A,a\}$, both $x$ and $x-a$ are
nonzero, so
$x\in\Gamma\cap(\Gamma+a)$.  This gives at least $79$ distinct
intersection points, contradicting
\eqref{eq:Gamma-square}.  Hence a good $t$ exists.
\end{proof}

\begin{theorem}\label{thm:type11}
If a complex abelian surface $A$ admits a principal polarization, then
$\irr(A)=3$.
\end{theorem}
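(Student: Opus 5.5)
The plan is to combine the universal bounds \eqref{eq:universal-bounds} with an explicit cubic map. Since $3\le\irr(A)\le4$ for every complex abelian surface, it is enough to produce one dominant rational map $A\dashrightarrow\PP^2$ of degree three. The lower bound $\irr(A)\geq3$ is the Alzati--Pirola inequality, so nothing needs to be proved in that direction.

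Let $\Theta$ be a principal polarization on $A$ and apply the dichotomy of Proposition~\ref{prop:ppas-dichotomy}. This splits the argument into two cases.

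\emph{Decomposable case (a).} Here $A\simeq E\times E'$ as abelian varieties for some elliptic curves $E,E'$. Proposition~\ref{prop:product} then gives a dominant degree-three rational map $E\times E'\dashrightarrow\PP^2$. Composing it with the isomorphism $A\simeq E\times E'$ finishes this case.

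\emph{Indecomposable case (b).} Here $(A,\Theta)$ is the Jacobian of a smooth genus-two curve, and $\Theta$ has a smooth symmetric representative $C\subset A$.
\begin{enumerate}[(i)]
\item By Lemma~\ref{lem:good-point}, there is a nonzero $t\in A[3]$ with $t\notin[2]_A(C)$, so $t$ is good with respect to $C$ in the sense of Definition~\ref{def:good-torsion}.
\item Proposition~\ref{prop:good-theta}, applied to $(A,\Theta)$, $C$ and $t$, then yields a dominant rational map $f_t:A\dashrightarrow\PP^2$ of degree three.
\end{enumerate}
The equivariance of $f_t$ is not needed for this statement.

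All the substantive work sits in the cited results: the base-ideal analysis in Proposition~\ref{prop:good-theta} and the counting argument in Lemma~\ref{lem:good-point}. The only thing to check here is that their hypotheses match exactly. Indecomposability is needed for the smooth symmetric theta divisor in Proposition~\ref{prop:ppas-dichotomy}(b), and the good torsion point must be taken with respect to that same $C$. With these checked, the theorem follows directly.
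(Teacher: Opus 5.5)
Your proposal is correct and matches the paper's proof exactly. It uses the dichotomy of Proposition~\ref{prop:ppas-dichotomy}, then Proposition~\ref{prop:product} in the decomposable case and Lemma~\ref{lem:good-point} with Proposition~\ref{prop:good-theta} in the indecomposable case, while the Alzati--Pirola lower bound supplies $\irr(A)\geq3$.
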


\begin{proof}
If the principal polarization is decomposable,
Proposition~\ref{prop:ppas-dichotomy} identifies $A$ with a product of
elliptic curves, we conclude by Proposition~\ref{prop:product}.  If it is indecomposable,  we conclude by Lemma~\ref{lem:good-point}
and Proposition~\ref{prop:good-theta}.
\end{proof}

\section{Polarizations of type \texorpdfstring{$(1,2)$}{(1,2)}}
\label{sec:12}

Let $L$ be a line bundle of type $(1,2)$ on an abelian surface $A$.
Then $L^2=4$ and $h^0(A,L)=2$, so $|L|$ is a pencil.
The fixed-part-free geometry was already described by Barth
\cite{Barth}; the formulation below, including
the product case, follows from Lemma~\ref{lem:BL-1d}.

\begin{proposition}\label{prop:12-dichotomy}
Exactly one of the following holds.
\begin{enumerate}[(a)]
\item The pencil $|L|$ has no fixed divisor.  Its base scheme is a reduced
      four-point torsor under $K(L)\simeq(\Z/2\Z)^2$,
      and a general member of $|L|$ is a smooth irreducible curve of genus
      three.
\item There are elliptic curves $E_1,E_2$, line bundles $P_i$ on $E_i$,
      and an origin-preserving isomorphism
      $\alpha:E_1\times E_2\xrightarrow{\sim}A$ such that
      $\deg(P_1)=1$, $\deg(P_2)=2$, and
      $\alpha^*L\simeq p_1^*P_1\otimes p_2^*P_2$.
      In this case $|L|$ has a fixed elliptic component.
\end{enumerate}
\end{proposition}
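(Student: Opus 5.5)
The plan is to derive the dichotomy from Lemma~\ref{lem:BL-1d} with $d=2$, supplemented by a few intersection-theoretic computations. The two alternatives are mutually exclusive because (b) asserts that $|L|$ has a fixed component, while (a) asserts that it has none. It therefore suffices to show that a fixed component leads to (b), and that the absence of a fixed divisor leads to the finer description in (a).

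\emph{Fixed component.} Suppose $|L|$ has a fixed component. Lemma~\ref{lem:BL-1d}(i) supplies elliptic curves $E_1,E_2$, line bundles $P_1,P_2$ of degrees $1$ and $2$, and an isomorphism of polarized abelian varieties $(E_1\times E_2,p_1^*P_1\otimes p_2^*P_2)\simeq(A,L)$. Composing with a translation of $A$ makes it origin-preserving. A translation changes $L$ only by an element of $\Pic^0$, and such an element can be absorbed into $P_1$ and $P_2$ without changing their degrees. Write $P_1=\cO_{E_1}(e)$ with $e\in E_1$. Then
\[
h^0(p_1^*P_1\otimes p_2^*P_2)=h^0(P_1)\,h^0(P_2)=1\cdot 2,
\]
so every member of $|L|$ contains $\{e\}\times E_2$. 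This is a fixed elliptic component, and we are in case (b).

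\emph{No fixed divisor.} Now suppose $|L|$ has no fixed divisor. By Lemma~\ref{lem:BL-1d}(ii), the base scheme consists of four points and a general member $D$ is smooth. Since $h^0(L)=2$, two distinct members $D,D'$ share no component, so the base scheme is $D\cap D'$, which has length $L^2=4$. Four points of total length $4$ must each have length one, so the base scheme is reduced. Because $K(L)$ preserves the class of $L$, each $x\in K(L)$ satisfies $T_x^*L\simeq L$, and hence $T_x^*$ acts on $|L|$ and preserves the base locus. We have $|K(L)|=(1\cdot 2)^2=4$, and $K(L)\simeq(\Z/2\Z)^2$ because the type is $(1,2)$.

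\emph{Main obstacle.} The hard step is showing that $K(L)$ acts freely, hence simply transitively, on the four base points. First, no nonzero $x\in K(L)$ acts trivially on the pencil. If it did, the smooth general member $D$ would be stable under $T_x$. But $T_x$ acts on $D$ without fixed points, with quotient of genus $2$ by Riemann--Hurwitz, and then the quotient $A/\langle x\rangle$ would carry a principal polarization whose theta divisor is the image of $D$. Second, to get freeness on the base points, use that the base points are points of $D$ fixed under the residual structure. I would first try the concrete route: by Lemma~\ref{lem:BL-1d}(ii), after translation $L$ is symmetric with base points four-division points of the form $b+K(L)$, following the computation in \cite[Example~10.1.4]{BL}. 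This identifies the base locus directly as a $K(L)$-coset, which is exactly a torsor.

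\emph{Genus of the general member.} Adjunction on the abelian surface gives $2g(D)-2=D^2=4$, so a smooth member has genus $3$. Irreducibility also needs an argument. If a smooth member were disconnected, it would be a disjoint union of curves $D_1,D_2$ with $D_1D_2=0$. Both squares would be nonnegative and at least one positive, and then the Hodge index theorem forces both classes to be proportional. Two disjoint curves with proportional classes have $D_i^2=0$, so the curves are elliptic and $L^2=0$, a contradiction. Hence a smooth member is connected, and since it is smooth it is irreducible, which completes case (a).
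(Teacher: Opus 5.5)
Apart from one step, your proposal follows the paper's proof: Lemma~\ref{lem:BL-1d}(i) for the product alternative, Lemma~\ref{lem:BL-1d}(ii) for the base points and smoothness, and adjunction for the genus. Your added details are sound. These are absorbing the $\Pic^0$ ambiguity into $P_1,P_2$, getting reducedness from $D\cdot D'=4$, and connectedness via Hodge index. In the Hodge index step the degenerate case should read: $D_1^2>0$ and $D_1\cdot D_2=0$ force $D_2\equiv 0$, which is impossible since $L\cdot D_2>0$. The paper simply uses that an ample divisor is connected.

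\textbf{The gap is the torsor assertion.} It comes from misidentifying the difficulty. Freeness of $K(L)$ on the base points is automatic: for $x\neq 0$ the translation $T_x$ has no fixed point on $A$, since $z+x\neq z$. None of your three attempts establishes this.
\begin{enumerate}[(1)]
\item Whether $x$ acts trivially on $|L|\simeq\PP^1$ has nothing to do with whether $T_x$ fixes a point of $A$. Your contradiction there also does not close as stated. Since $2x=0$, $T_x^*$ is an involution of $|L|$, so some members really are $T_x$-stable and descend to theta divisors on $A/\langle x\rangle$. A principally polarized quotient is therefore no contradiction.
\item The appeal to the ``residual structure'' has no content.
\item The citation you fall back on, as recorded in Lemma~\ref{lem:BL-1d}(ii), only places the base points of a symmetric $L$ in $A[4]$. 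That does not make them a coset of $K(L)$.
\end{enumerate}

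The missing line is the paper's argument. The base scheme is nonempty and $K(L)$-stable. $K(L)$ acts on it freely, with orbits of size $|K(L)|=4$, and translations preserve local lengths. So a scheme of total length $L^2=4$ is a single orbit of reduced points, i.e.\ a $K(L)$-torsor. This also gives reducedness directly, without using the ``four points'' clause.
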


\begin{proof}
The alternatives and the product description are exactly
Lemma~\ref{lem:BL-1d}(i).  In the absence of a fixed component,
Lemma~\ref{lem:BL-1d}(ii) gives a four-point base scheme and a smooth
general member $C\in|L|$.  The base scheme is invariant under the free
translation action of $K(L)$, which has order four.  Since its length is
$L^2=4$, it is a single reduced $K(L)$-orbit.  The ample divisor $C$ is
connected and, being smooth, is therefore irreducible.  Adjunction gives
$2g(C)-2=C^2=4$, so $g(C)=3$.
\end{proof}

\begin{theorem}\label{thm:type12}
If a complex abelian surface $A$ admits a polarization of type $(1,2)$,
then $\irr(A)=3$.
\end{theorem}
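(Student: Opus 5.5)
The argument splits along the dichotomy of Proposition~\ref{prop:12-dichotomy}. By \eqref{eq:universal-bounds} we already know $\irr(A)\geq3$, so in each case it suffices to produce a dominant rational map $A\dashrightarrow\PP^2$ of degree three.

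\textbf{Product case.} In case (b), the isomorphism $\alpha:E_1\times E_2\xrightarrow{\sim}A$ identifies $A$ with a product of two elliptic curves. Proposition~\ref{prop:product} then gives a cubic map, and we are done.

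\textbf{Fixed-component-free case.} In case (a), a general member $C\in|L|$ is a smooth irreducible curve of genus three. We would invoke the theorem of Tokunaga--Yoshihara \cite{TokunagaYoshihara}, which produces a cubic rational map to $\PP^2$ from a smooth genus-three curve on an abelian surface.

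To keep the write-up self-contained in spirit, we would also recall the mechanism behind that theorem. Consider the morphism $A\to\PP^2$ built from the pencil $|L|$ together with a map from the curve. Concretely, one approach is as follows. The point $x\in A$ lies on the members $C+a$ of a two-dimensional family of translates, and for a non-hyperelliptic $C$ the canonical embedding $C\subset\PP^2$ as a plane quartic can be used. Sending $x$ to the Gauss image, that is, the tangent direction of a translate of $C$ through $x$, yields a map of degree governed by the intersection theory $C^2=4$ and by the base points.

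I expect the main subtlety to be checking that the hypotheses of Tokunaga--Yoshihara are met. Smoothness and genus three come from Lemma~\ref{lem:BL-1d} and adjunction, as in Proposition~\ref{prop:12-dichotomy}. If their statement requires additional genericity (for instance, that $C$ be non-hyperelliptic), we would handle this separately. First, we would try to show that a general member of the pencil is non-hyperelliptic, or that the construction works for hyperelliptic $C$ as well. Failing that, we would note that the hyperelliptic locus is a proper closed condition in the pencil, so a general member avoids it unless every member is hyperelliptic. That degenerate situation would be reduced to the product case through the fixed-component criterion of Lemma~\ref{lem:BL-1d}(i).

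Combining the two cases with \eqref{eq:universal-bounds} gives $\irr(A)=3$.
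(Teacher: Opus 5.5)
Your proposal matches the paper's proof: you split via Proposition~\ref{prop:12-dichotomy}, apply Tokunaga--Yoshihara to the smooth genus-three member in case (a) and Proposition~\ref{prop:product} in case (b), and take the lower bound from \eqref{eq:universal-bounds}. You should drop the ``mechanism'' paragraph and the hyperelliptic fallback, since the cited theorem needs only a smooth genus-three curve on $A$; moreover, the Gauss-map sketch does not define a map to $\PP^2$ (tangent directions on a surface lie in $\PP^1$), and nothing justifies reducing an entirely hyperelliptic pencil to a fixed component.
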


\begin{proof}
In case~(a) of Proposition~\ref{prop:12-dichotomy}, the surface $A$
contains a smooth curve of genus three.  Tokunaga--Yoshihara shows that precisely
this hypothesis supplies a dominant rational map
$A\dashrightarrow\PP^2$ of degree three
\cite{TokunagaYoshihara, YoshiharaProduct}.  In case~(b),
$A$ is a product of elliptic curves and Proposition~\ref{prop:product}
applies.  
\end{proof}

\section{Polarizations of type \texorpdfstring{$(1,3)$}{(1,3)}}
\label{sec:13}

\subsection{Good three-torsion and the good locus}

Recall from Definition~\ref{def:good-torsion} that, for a smooth
symmetric theta divisor $C$, a nonzero point $t\in B[3]$ is good when
$t\notin[2]_B(C)$ and bad otherwise.  Proposition~\ref{prop:good-theta}
handles good points.  We now deform a prescribed bad point to the good
locus and retain the equivariant linear system under specialization.

We use the fine level-$3$ moduli space with a universal family $\pi: \mathscr{B}\to \mathcal{A}_2(3)$ and its finite \'etale
theta-line-bundle cover described in
Subsection~\ref{subsec:level-structures}.  Fix a primitive cube root
$\zeta$, put $W=(\mathbf F_3)^4$ with its standard symplectic form, and
fix $0\ne v\in W$.  The universal marking is denoted
$\alpha:W\xrightarrow{\sim}\mathscr B[3]$, the induced torsion section is
$t=\alpha(v)$, and $\mathscr C\subset\mathscr B$ denotes the relative
symmetric theta divisor.  Changing the marking only relabels torsion
points; it does not translate the theta divisor.

\begin{lemma}\label{lem:13-good-locus}
On any connected component $S$ of the finite \'etale
theta-line-bundle cover above, let $t=\alpha(v)$ be the universal
three-torsion section.  For a point $s\in S$, write
$\bar s=\Spec\overline{\kappa(s)}$ for a geometric point over $s$, and
define
\[
S^{\mathrm{good}}
:=
\left\{
s\in S:
t_{\bar s}\notin [2](\mathscr C_{\bar s})
\right\}.
\]
Then $S^{\mathrm{good}}$ is a nonempty Zariski-open subset.  Moreover,
if
\[
S^{\mathrm{ind}}
:=
\left\{
s\in S:
(\mathscr B_{\bar s},\mathscr C_{\bar s})
\text{ is indecomposable}
\right\},
\]
then $S^{\mathrm{good}}\cap S^{\mathrm{ind}}$ is nonempty.
\end{lemma}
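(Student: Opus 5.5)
Openness comes from properness of the image of $[2]\circ\mathscr C$; nonemptiness comes from Lemma~\ref{lem:good-point} plus transitivity of $\operatorname{Sp}_4(\mathbf F_3)$.

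\emph{Openness.} The relative theta divisor $\mathscr C\subset\mathscr B$ is closed and $\pi$ is proper, so $\mathscr C\to S$ is proper. The fiberwise multiplication $[2]:\mathscr B\to\mathscr B$ is a finite morphism over $S$, so the image $\mathscr G:=[2](\mathscr C)$ is a closed subset of $\mathscr B$. The torsion section $t:S\to\mathscr B$ is a morphism, so $t^{-1}(\mathscr G)$ is closed in $S$. A point $s$ lies in $t^{-1}(\mathscr G)$ exactly when $t_{\bar s}\in[2](\mathscr C_{\bar s})$. This uses that formation of images commutes with passing to fibers set-theoretically, and that geometric points of fibers of $\mathscr C$ map onto those of $[2](\mathscr C_{\bar s})$. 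Hence $S^{\mathrm{good}}=S\setminus t^{-1}(\mathscr G)$ is Zariski open. The locus $S^{\mathrm{ind}}$ is also open: the decomposable locus is the image of the closed boundary $\mathcal A_1\times\mathcal A_1$-type product locus, which is closed of codimension one. Alternatively, one can say it is nonempty and open because a general ppas is a Jacobian.

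\emph{Nonemptiness.} This is the main point, and I plan to handle it with the transitivity recorded in Subsection~\ref{subsec:level-structures}. Since $S$ is irreducible (a finite étale cover of the smooth connected fine moduli component, or a connected component of it, hence smooth and connected), it suffices to exhibit one point of $S^{\mathrm{good}}\cap S^{\mathrm{ind}}$. Both sets are open, so their intersection is then a nonempty open subset; in particular $S^{\mathrm{good}}$ is nonempty.

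To produce the point, pick any $s_0\in S$ over an indecomposable ppas $(B,\Theta)$; indecomposable points exist since $S^{\mathrm{ind}}$ is dense open. The fiber $\mathscr C_{\bar s_0}$ is a symmetric theta divisor. By Lemma~\ref{lem:good-point}, together with the independence of goodness from the symmetric representative in Definition~\ref{def:good-torsion}, there exists a good $0\ne t_0\in B[3]$. Since $\operatorname{Sp}_4(\mathbf F_3)$ acts transitively on nonzero vectors, there is $g$ with $\alpha_{s_0}(g v)=t_0$. The marking $\alpha_{s_0}\circ g$ is another full symplectic level-$3$ structure on the same $(B,\Theta)$ with the same symmetric line bundle. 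It therefore defines a point $s_1$ of the cover with fiber $(B,\mathscr C_{\bar s_0})$ and $t_{\bar s_1}=t_0$ good.

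\emph{Expected obstacle.} The main obstacle is ensuring that $s_1$ lies on the \emph{same} connected component $S$. The cover may have several components, and $g$ could move points between components. I would handle this by noting that $\operatorname{Sp}_4(\mathbf F_3)$ acts on the fine moduli scheme by automorphisms preserving the chosen symplectic component, since the Weil pairing condition is preserved. I would then argue on the cover in one of two ways. One option is to run the transitivity argument inside the monodromy: the image of $\pi_1(S)$ acting on $B[3]$ is the full $\operatorname{Sp}_4(\mathbf F_3)$, or at least the subgroup $\Gamma(3)$-level cover already rigidifies the labels, and the theta cover only changes the two-torsion datum. Then moving $s_0$ along a loop in $S^{\mathrm{ind}}$ realizes any relabeling of $B[3]$, since the $2$-torsion cover has monodromy acting trivially on $3$-torsion. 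The fallback, if monodromy is awkward, is to replace the fixed $v$ by $g^{-1}$ applied to each component. Equivalently, for each component, apply Lemma~\ref{lem:good-point} to a fiber and use that the set of good labels is nonempty for every indecomposable fiber, pulling back the component by the automorphism induced by $g$.
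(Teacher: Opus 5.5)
The gap is exactly the step you flag yourself: you never show that the relabelled point $s_1$ lies on the prescribed component $S$, and none of your proposed fixes works. (Your openness argument is fine and is essentially the paper's: $t^{-1}([2](\mathscr C))$ is the same closed set as the image of the finite $S$-scheme $\mathscr C\cap[2]^{-1}(t(S))$.)

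\begin{itemize}
\item \emph{Monodromy option.} This is false. Over $\mathcal A_2(3)$ the marking $\alpha\colon W\times S\xrightarrow{\sim}\mathscr B[3]$ trivializes the three-torsion. So $\pi_1(S)$ acts trivially on $B[3]$, and transporting $t=\alpha(v)$ around any loop returns $t_{s_0}$, never $t_0$. Your own parenthetical remark, that the level structure already rigidifies the labels, contradicts the claim that loops realize arbitrary relabellings.
\item \emph{Replacing $v$ by $g^{-1}v$.} This changes the statement, since $v$ is fixed.
\item \emph{Pulling back by the lift $\tilde g$ of $g$.} This only helps if you know that $\tilde g$ maps $S$ to itself, which you do not prove.
\end{itemize}

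The missing idea is short and has two parts.
\begin{itemize}
\item A finite \'etale morphism is open and closed. Hence every connected component $S$ of the theta-line-bundle cover surjects onto the connected scheme $\mathcal A_2(3)$.
\item Goodness at a point of the cover depends only on the image of that point in $\mathcal A_2(3)$. Points over the same marked surface differ only in the choice of symmetric theta divisor. Two such divisors differ by translation by some $\eta\in B[2]$, and $[2](C+\eta)=[2](C)$.
\end{itemize}

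So you should not insist that $s_1$ carry the same symmetric line bundle as $s_0$. The relabelled surface $(B,\Theta,\alpha_{s_0}\circ g)$ is a point of $\mathcal A_2(3)$, in the same symplectic component because $g\in\operatorname{Sp}_4(\mathbf F_3)$ preserves the Weil-pairing normalization. Any preimage of it in $S$ has $t=t_0$, and $t_0$ is good there.

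With this repair your route is valid. It even produces a point of $S^{\mathrm{good}}\cap S^{\mathrm{ind}}$ directly, since $s_0$ was chosen indecomposable.

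The paper uses the same surjectivity but a different test point. It takes a product $E\times F$ with $t=(\alpha_0,\beta_0)$ and $\alpha_0,\beta_0\neq0$. There $[2](C_0)=C_0$ makes goodness immediate, without Lemma~\ref{lem:good-point}. The paper then obtains $S^{\mathrm{good}}\cap S^{\mathrm{ind}}\neq\varnothing$ from the irreducibility of $S$.
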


\begin{proof}
The scheme $\mathcal A_2(3)$ is smooth and irreducible.  Thus its finite
\'etale cover is normal, and the connected component $S$ is irreducible
(the irreducible components of a normal scheme are open and closed).

Consider the closed subscheme
\[
Z_{\mathrm{bad}}
:=
\mathscr C\cap [2]_{\mathscr B}^{-1}(t(S))
\subset \mathscr B.
\]
Since $[2]_{\mathscr B}^{-1}(t(S))$ is finite \'etale over $S$,
the morphism $Z_{\mathrm{bad}}\to S$ is finite.  Hence its image
$D_{\mathrm{bad}}\subset S$ is closed.

For every $s\in S$, after passing to a geometric point
$\bar s=\Spec\overline{\kappa(s)}$ over $s$, we have
\[
s\in D_{\mathrm{bad}}
\iff
(Z_{\mathrm{bad}})_{\bar s}\neq\varnothing
\iff
\exists\,x\in\mathscr C_{\bar s}
\text{ with }[2]x=t_{\bar s}
\iff
t_{\bar s}\in[2](\mathscr C_{\bar s}).
\]
Thus
\[
D_{\mathrm{bad}}=S\setminus S^{\mathrm{good}},
\]
and $S^{\mathrm{good}}$ is Zariski open.  Notice in particular that
this description is geometric and is preserved by extension of the
residue field.

It remains to prove nonemptiness.  Take a product principally polarized
surface
\[
B=E\times F
\]
with product theta divisor
\[
C_0
=
E\times\{0_F\}\cup\{0_E\}\times F.
\]
Then $[2](C_0)=C_0$.  Choose nonzero
$\alpha_0\in E[3]$ and $\beta_0\in F[3]$.  The point
$(\alpha_0,\beta_0)$ does not lie in $[2](C_0)$.
By the transitivity discussed above, one may choose the level marking
so that the fixed abstract vector $v$ labels this point.  This operation
leaves $C_0$ unchanged.

Moreover, every symmetric theta divisor inducing the product principal
polarization is a translate of $C_0$ by a two-torsion point, and hence
has the same image under $[2]$.  Since every connected component of a
finite \'etale cover of the connected normal scheme
$\mathcal A_2(3)$ maps surjectively to $\mathcal A_2(3)$, this good
marked product point has a lift to every connected component $S$.
Consequently $D_{\mathrm{bad}}\neq S$.

The geometric decomposable locus is a proper closed subset of
$\mathcal A_2(3)$; hence its inverse image
$D_{\mathrm{dec}}\subset S$ is also a proper closed subset.  Since
$S$ is irreducible,
\[
D_{\mathrm{bad}}\cup D_{\mathrm{dec}}\subsetneq S.
\]
Equivalently,
\[
S^{\mathrm{good}}\cap S^{\mathrm{ind}}\neq\varnothing.
\]
\end{proof}

\begin{lemma}\label{lem:13-good-deformation}
Let $(B,C)$ be an indecomposable principally polarized complex
abelian surface with a chosen smooth symmetric theta divisor, and let
$0\ne t\in B[3]$.  There exist a DVR $R$, with residue field
$k=\C$ and fraction field $K$, a principally polarized abelian scheme
\[
\pi:\mathscr B\to\Spec R,
\]
a three-torsion section still denoted $t$, and a symmetric rigidified
relative theta line bundle
\[
\mathscr L=\mathcal O_{\mathscr B}(\mathscr C)
\]
with relative theta divisor $\mathscr C\subset\mathscr B$, such that:

\begin{enumerate}[(i)]
\item the special fiber is the prescribed marked object $(B,C,t)$;
\item for an algebraic closure $\bar K/K$, the geometric generic fiber
$(\mathscr B_{\bar K},\mathscr C_{\bar K})$ is indecomposable and
$t_{\bar K}$ is good.
\end{enumerate}
\end{lemma}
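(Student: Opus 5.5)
The plan is to realize the prescribed marked object as a point of the moduli space from Subsection~\ref{subsec:level-structures}, and then extract a DVR by choosing a curve through that point into the good locus of Lemma~\ref{lem:13-good-locus}.

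First, we choose a full symplectic level-$3$ structure on $B$. By the transitivity of $\operatorname{Sp}_4(\mathbf F_3)$ on nonzero vectors, we may arrange that $\alpha(v)=t$; this relabelling does not move $C$. This produces a point $s_0\in\mathcal A_2(3)$. Since $C$ is symmetric, it is cut out by a symmetric theta line bundle $\cO_B(C)$, and we rigidify it at $0_B$. This datum is a point of the finite \'etale theta-line-bundle cover. Let $S$ be the connected component containing that point, still called $s_0$. By the construction of $\mathscr C$ as the evaluation divisor of $\pi^*\pi_*\mathscr L\to\mathscr L$, the fiber $\mathscr C_{s_0}$ is the unique divisor in $|\cO_B(C)|$, namely $C$. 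So $(\mathscr B_{s_0},\mathscr C_{s_0},t_{s_0})=(B,C,t)$.

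Second, Lemma~\ref{lem:13-good-locus} gives a nonempty Zariski-open subset $U=S^{\mathrm{good}}\cap S^{\mathrm{ind}}$ of the irreducible variety $S$. Here we use that $S^{\mathrm{ind}}$ is open, since $D_{\mathrm{dec}}$ is closed. Because $S$ is an irreducible quasi-projective variety, we choose an irreducible curve $\Sigma\subset S$ through $s_0$ that meets $U$; for instance, a general complete-intersection curve through $s_0$ and a point of $U$. Let $\widetilde\Sigma\to\Sigma$ be its normalization and pick a point $\tilde s_0$ over $s_0$. We then set $R=\cO_{\widetilde\Sigma,\tilde s_0}$, which is a DVR with residue field $\C$. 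Its generic point maps to the generic point of $\Sigma$, which lies in $U$ because $\Sigma\cap U$ is a nonempty open subset of $\Sigma$.

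Third, we pull back the universal $\mathscr B$, the section $t$, and the rigidified symmetric $\mathscr L=\cO(\mathscr C)$ to $\Spec R$. Property~(i) holds by the first step. For~(ii), the definitions of $S^{\mathrm{good}}$ and $S^{\mathrm{ind}}$ are stated on geometric points, and, as remarked in the proof of Lemma~\ref{lem:13-good-locus}, they are insensitive to extension of the residue field. Hence the geometric generic fiber over $\bar K$ is indecomposable and $t_{\bar K}$ is good. Formation of $\mathscr C$ commutes with base change, so the pulled-back divisor is the relative theta divisor.

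The main obstacle is the first step: verifying that $(B,C)$, with its smooth symmetric divisor and rigidification, really is a point of the chosen finite \'etale cover with fiber divisor exactly $C$, rather than a $2$-torsion translate. I would handle this through the torsor description of symmetric representatives under $\widehat{\mathscr B}[2]$: the cover parametrizes exactly such choices, so the representative $\cO_B(C)$ is attained. By contrast, the existence of the curve through $s_0$ meeting $U$ is standard.
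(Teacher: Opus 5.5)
Your proposal follows the paper's proof essentially step by step: a level marking with $\alpha(v)=t$, a lift to the theta-line-bundle cover via $\cO_B(C)$, a curve through that point meeting $U=S^{\mathrm{good}}\cap S^{\mathrm{ind}}$, and the local ring of its normalization. The one step the paper includes that you omit is the check that $\mathscr C_{\bar K}$ is smooth, which is needed so that ``good'' in the sense of Definition~\ref{def:good-torsion} applies; it follows because an indecomposable theta divisor over an algebraically closed field of characteristic zero is an integral curve of arithmetic genus two, and a singular one would have a normalization of genus at most one, which is impossible since $\mathscr C_{\bar K}^2=2$.
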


\begin{proof}
Choose a full symplectic level-$3$ marking of $B[3]$ carrying the fixed
vector $v$ to $t$, and lift the resulting point to the
theta-line-bundle cover using the symmetric line bundle
$\mathcal O_B(C)$.  Let $S$ be the connected component containing this
point.  By Lemma~\ref{lem:13-good-locus}, the intersection
\[
U
:=
S^{\mathrm{good}}\cap S^{\mathrm{ind}}
\]
is a nonempty Zariski-open subset of $S$.

Since $S$ is smooth and irreducible, general hyperplane sections through
the prescribed point give an integral curve $T\subset S$ which is not
contained in $S\setminus U$.  Since $S\setminus U$ is
closed and $T$ is integral, the generic point $\eta_T$ of $T$ belongs
to $U$.  Let
\[
\nu:\widetilde T\to T
\]
be the normalization, choose a point $\tilde x\in\widetilde T$ lying
over the prescribed point, and set
\[
R:=\mathcal O_{\widetilde T,\tilde x}.
\]
Then $R$ is a DVR.  Since the residue field of $\tilde x$ is a finite
extension of $\C$, it is equal to $\C$.  Let $K=\operatorname{Frac}(R)$.

Pulling back the universal objects along
$\Spec R\to S$ gives the asserted family.  Over the local base, tensoring
the theta line bundle by the pullback of the inverse of its restriction
to the zero section supplies the stated rigidification.  Its closed point maps to the
prescribed marked surface.  Its generic point
\[
\Spec K\longrightarrow S
\]
maps to $\eta_T\in U$.  Therefore, after base change to an algebraic
closure $\bar K/K$, the geometric generic point
\[
\Spec\bar K\longrightarrow\Spec K\longrightarrow S
\]
still factors through $U$.  By the geometric description of $U$ in
Lemma~\ref{lem:13-good-locus}, it follows that
$(\mathscr B_{\bar K},\mathscr C_{\bar K})$ is indecomposable and
$t_{\bar K}$ is good.  The required smoothness is geometric.  Indeed,
over an algebraically closed field of characteristic zero an
indecomposable theta divisor on a principally polarized abelian surface
is integral and has arithmetic genus two.  If it were singular, its
normalization would have genus at most one.  A rational curve cannot map
nontrivially to an abelian variety, whereas the image of an elliptic curve
is a translate of an elliptic subgroup and has self-intersection zero,
contrary to $\mathscr C_{\bar K}^2=2$.  Thus
$\mathscr C_{\bar K}$ is a smooth curve of genus two.
\end{proof}

\subsection{Specialization of a dominant linear system}

Chen and Stapleton prove a general specialization theorem for rational
maps to ruled varieties \cite[Theorem~1.1]{ChenStapleton}.  That theorem
does not retain a prescribed target or an equivariant linear system, so
we use the following fixed-linear-system refinement.  Its explicit
base-change hypothesis is essential both in the proof and below.

\begin{lemma}\label{lem:degree-specialization}
Let $R$ be a DVR with fraction field $K$ and residue field $k$.  Let
$\pi:\mathscr X\to\Spec R$ be a smooth projective family of geometrically
integral surfaces, let $\mathscr M$ be a line bundle, and suppose that
$\mathscr E=\pi_*\mathscr M$ is finite free and commutes with base change.
Let $\mathscr V\subset\mathscr E$ be a saturated free submodule of rank
three.  Put $V_K=\mathscr V\otimes_RK$ and
$V_0=\mathscr V\otimes_Rk$, and assume that their evaluation maps define
dominant rational maps $f_K:\mathscr X_K\dashrightarrow\PP(V_K^\vee)$
and $f_0:\mathscr X_0\dashrightarrow\PP(V_0^\vee)$.  Then
$\deg(f_0)\leq\deg(f_K)$.
\end{lemma}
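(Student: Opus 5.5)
The plan is to resolve the base locus of the family's linear system and compare self-intersections of the resolved moving parts on the two fibers. Equivalently, I will interpret $\deg f$ as a count of isolated points in a general fiber, and then apply upper semicontinuity of fiber cardinality (or length) for a finite part of a family over $R$. I will use the graph formulation. Set $\PP:=\PP_R(\mathscr V^\vee)$, a $\PP^2$-bundle over $\Spec R$. Since $\mathscr V\subset\pi_*\mathscr M$ and $\pi_*\mathscr M$ commutes with base change, the evaluation $\mathscr V\otimes\cO_{\mathscr X}\to\mathscr M$ restricts on each fiber to the evaluation of $V_K$, respectively $V_0$. Saturation guarantees that $V_0\to H^0(\mathscr X_0,\mathscr M_0)$ is injective, so $f_0$ really is the map of a rank-three system. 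The evaluation therefore defines a rational map $F:\mathscr X\dashrightarrow\PP$ over $R$ restricting to $f_K$ and $f_0$ wherever these are defined. Let $\Gamma\subset\mathscr X\times_R\PP$ be the closure of the graph of $F|_{\mathscr X_K}$. It is integral, flat over $R$, and proper, with generic fiber $\Gamma_K$ the graph of $f_K$.

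Next I would analyze the special fiber $\Gamma_0$. It is a two-dimensional cycle (flatness) containing the closure of the graph of $f_0$. The reason is that on the open set $U\subset\mathscr X$ where $\mathscr V$ generates $\mathscr M$, which meets $\mathscr X_0$ in a dense open set because $f_0$ is defined there, $F$ is a morphism, so $\Gamma$ agrees with the graph over $U$. Hence $\Gamma_0=\overline{\Gamma(f_0)}+Z$ as cycles, with $Z$ effective and supported over the base locus of $V_0$.

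Consider the second projection $p:\Gamma\to\PP$, which is proper. Pushing forward fundamental classes commutes with specialization of cycles (Fulton, Prop.~10.1 / 20.3). Thus $p_*[\Gamma_K]=\deg(f_K)[\PP_K]$ specializes to $p_*[\Gamma_0]=\deg(f_0)[\PP_0]+p_*[Z]$. Specialization of $[\PP_K]$ is $[\PP_0]$, so $\deg(f_K)[\PP_0]=\deg(f_0)[\PP_0]+p_*[Z]$. Since $p_*[Z]$ is an effective $2$-cycle on the irreducible surface $\PP_0$, it equals $m[\PP_0]$ with $m\ge0$, and therefore $\deg f_0\le\deg f_K$. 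Dominance of $f_0$ is used to know that the component $\overline{\Gamma(f_0)}$ contributes exactly $\deg(f_0)[\PP_0]$ rather than zero.

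The main obstacle is the justification that the graph closure's special fiber contains $\overline{\Gamma(f_0)}$ with multiplicity exactly one, and the compatibility of proper pushforward with specialization over a DVR. For the former, I will use that $\Gamma\to\mathscr X$ is an isomorphism over $U$, so over $U\cap\mathscr X_0$ the fiber $\Gamma_0$ is reduced and equal to the graph. Here it is essential that $U\cap\mathscr X_0\ne\varnothing$, which is exactly where base-change compatibility and saturation enter. For the latter, I will cite the standard specialization map for cycles on schemes flat over a DVR.
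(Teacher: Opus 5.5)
Your argument is correct and follows essentially the same route as the paper: both take the closure of the generic graph in $\mathscr X\times_R\PP_R(\mathscr V^\vee)$, use saturation and base change to show that the closure of the graph of $f_0$ occurs in the special fiber with positive multiplicity (you even get multiplicity exactly one), and then specialize a degree via Fulton. The only difference is cosmetic: you push the fundamental cycle forward to $\PP_R(\mathscr V^\vee)$ and compare coefficients of $[\PP_0]$ using effectivity, while the paper caps with $\xi^2$ for $\xi=\operatorname{pr}_2^*\cO(1)$ and uses nefness of $\xi$ on the extra components; by the projection formula these are the same computation.
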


\begin{proof}
Because $\mathscr E/\mathscr V$ is torsion-free over the DVR, it is flat.
Tensoring $0\to\mathscr V\to\mathscr E\to
\mathscr E/\mathscr V\to0$ with $k$ therefore shows that
$V_0\hookrightarrow\mathscr E\otimes_Rk$.  The base-change assumption
identifies the latter module with $H^0(\mathscr X_0,\mathscr M_0)$, so
$V_0$ induces a $2$-dimensional linear system on $\mathscr{X}_0$.

Consider the relative evaluation map
\[
 \operatorname{ev}:\mathscr V\otimes_R\mathcal O_{\mathscr X}
 \longrightarrow\mathscr M
\]
and let $U\subset\mathscr X$ be its surjectivity locus.  Nakayama's
lemma identifies $U_0$ with the base-point-free locus of $V_0$, and
evaluation induces a morphism
$F:U\to\PP(\mathscr V^\vee)$ whose restrictions represent the rational
maps in the statement.  Since $U$ is open in the $R$-flat scheme
$\mathscr X$, its generic fiber is schematically dense in $U$.
Consequently, the graph of $F|_{U_0}$ is contained in the special fiber
of the closure of the generic graph.  This inclusion is the reason for
the base-change and saturation hypotheses.

Let $\mathscr G$ be the reduced closure of the graph of $f_K$ in
$\mathscr X\times_R\PP(\mathscr V^\vee)$.  It is integral.  Its local
rings are torsion-free over $R$, hence $\mathscr G$ is flat over the DVR.
Every component of $\mathscr G_0$ has dimension two, because a
uniformizer is a nonzerodivisor on the integral threefold $\mathscr G$.
The closure $G_0$ of the graph of the dominant map $f_0$ also has
dimension two.  The preceding graph inclusion shows that
$G_0$ is an irreducible component of the fundamental cycle of
$\mathscr G_0$, with multiplicity $m\geq1$.  For
$\xi=\operatorname{pr}_2^*\mathcal O(1)|_{\mathscr G}$, specialization of
Cartier intersections \cite[\S20.3]{Fulton} gives
\begin{equation}
 \deg\bigl(\xi^2\cap[\mathscr G_0]\bigr)
 =\deg\bigl(\xi^2\cap[\mathscr G_K]\bigr)=\deg(f_K).
\label{eq:graph-specialization}
\end{equation}
The line bundle $\xi$ is nef, so its square has nonnegative degree on
every other two-dimensional component of $\mathscr G_0$.  Its degree on
$G_0$ is $\deg(f_0)$.  Equation~\eqref{eq:graph-specialization} therefore
gives $m\deg(f_0)\leq\deg(f_K)$ and hence the desired inequality.
\end{proof}

\subsection{Degeneration of the good-torsion construction}

Assume from now on that the special point $t\in B[3]$ is bad, and take
the family of Lemma~\ref{lem:13-good-deformation}.  The two relative
divisors $\mathscr C$ and $\mathscr C+t$ are distinct on each geometric
fiber.  On the special and
geometric generic fibers they are integral, hence have no common
component.  Their intersection
$Z:=\mathscr C\cap(\mathscr C+t)$ is proper and quasi-finite over $R$,
hence finite.  It is a codimension-two local complete intersection in
the smooth threefold $\mathscr B$, so it is Cohen--Macaulay and pure of
dimension one.  Its fibers are zero-dimensional; consequently it has
no vertical associated component, and a uniformizer of $R$ is a
nonzerodivisor on $Z$.  Thus $Z$ is finite flat over $R$, of fiber
length $\Theta^2=2$.

Choose a closed point of $Z_{\bar K}$, defined over a finite extension
$K'/K$.  Normalize $R$ in $K'$, localize at a point over the closed
point of $\Spec R$, and use properness to extend the resulting generic
point to a section $p$ of $Z$.  This finite extension may be ramified;
after replacing $R$ by it, the residue field is still $\C$, and goodness
and indecomposability of the geometric generic fiber are unchanged.

Put $q:=p-t$ and $d:=p+q$.  With indices in $\Z/3\Z$, set
$\mathscr B_i:=\mathscr C+it$ and
$\mathscr A_i:=\mathscr C+d+it$, and let $b_i$ and $a_i$ be their
canonical relative sections.  Write
$\mathscr L_i:=\mathcal O_{\mathscr B}(\mathscr B_i)$ and
$\mathscr Q:=\phi_{\mathscr L}(d)$.  The theorem of the square, with the
translation convention~\eqref{eq:translate-convention}, identifies the
three products
\[
 s_i=a_i b_{i+1}b_{i+2}\quad(i\in\Z/3\Z)
 \quad\text{as sections of}\quad
 \mathscr M:=(\mathscr L_0\otimes\mathscr L_1\otimes\mathscr L_2)
 \otimes\mathscr Q^{-1}.
\]
The relative theorem of the square in the rigidified Picard functor
gives these identifications on the abelian scheme over $R$, rather than
only on its generic fiber.  Each canonical divisor section is primitive,
because its relative Cartier divisor contains no entire fiber, and a
line-bundle isomorphism over $R$ cannot introduce a factor of a
uniformizer.  Hence the product sections $s_i$ are primitive integral
sections and their reductions are nonzero.  For every geometric point
$u\in\Spec R$, the subscript $u$ will denote restriction to that fiber.
Every $\mathscr M_u$ is numerically equivalent to $3\Theta_u$, so its
higher cohomology vanishes and
$h^0(\mathscr B_u,\mathscr M_u)=9$.  Thus cohomology and base change show
that
$\mathscr E=\pi_*\mathscr M$ is free of rank nine and commutes with base
change.

Translation by $t$ preserves $\mathscr M$.  Choose an isomorphism
$T_t^*\mathscr M\simeq\mathscr M$.  Since
$\pi_*\mathcal O_{\mathscr B}=\mathcal O_R$, its third iterate is
multiplication by a unit $u\in R^*$.  The algebra
$R[X]/(X^3-u)$ is finite \'etale.  After making this base change,
localizing over the closed point, and rescaling by the inverse of the
resulting cube root of $u$, we may arrange that the third iterate is the
identity.  Thus translation by $t$ gives a
$G=\Z/3\Z$-action on $\mathscr E$.  The generic span
$V_K=\langle s_0,s_1,s_2\rangle_K\subset\mathscr E_K$ is $G$-stable.
Define its saturated lattice by
$\mathscr V=V_K\cap\mathscr E\subset\mathscr E_K$ and put
$V_0=\mathscr V\otimes_Rk$.  The intersection definition makes
$\mathscr E/\mathscr V$ torsion-free, so $\mathscr V$ is a saturated free
rank-three module and $V_0\hookrightarrow H^0(B,\mathscr M_0)$.

\begin{lemma}
\label{lem:13-limit-system}
The action of $G$ on $V_0$ contains each of the three characters of $G$
with multiplicity one.  In particular its action on
$\PP(V_0^\vee)$ is faithful.  Moreover, the linear system $V_0$ has no
fixed divisor.
\end{lemma}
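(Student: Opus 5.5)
Over $K$, the generic span $V_K$ carries a $G$-action for which the three sections $s_0,s_1,s_2$ are cyclically permuted up to scalars, because $T_t^*\operatorname{div}(s_i)=\operatorname{div}(s_{i-1})$ exactly as in the proof of Proposition~\ref{prop:good-theta}.
\begin{enumerate}[(1)]
\item \textbf{Characters over $K$.} The chosen linearization satisfies $\sigma^3=\mathrm{id}$. Hence $\sigma(s_i)=c_i s_{i-1}$ with $c_0c_1c_2=1$. Rescaling the basis (after adjoining cube roots if needed, harmless by the same finite base change argument) makes $\sigma$ act on $V_K$ by the regular representation of $\Z/3\Z$. So $V_K$ contains each character $\chi\in\{1,\zeta,\zeta^2\}$ exactly once.
\item \textbf{Characters persist in the limit.} The characters of $G$ take values in $\mu_3\subset R^*$, and $3$ is invertible in $R$. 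Therefore $\mathscr E$ splits $G$-equivariantly as $\mathscr E=\bigoplus_\chi\mathscr E^\chi$, via the projectors $e_\chi=\frac13\sum_g\chi(g)^{-1}g$, which are defined over $R$. The same holds for $\mathscr V=V_K\cap\mathscr E$, since it is $G$-stable. Each $\mathscr V^\chi=V_K^\chi\cap\mathscr E$ is a saturated rank-one free module. Reducing mod the uniformizer gives $V_0=\bigoplus_\chi V_0^\chi$, with each summand one-dimensional. So each character occurs with multiplicity one in $V_0$.
\item \textbf{Faithfulness.} In a basis of eigenvectors, the generator acts on $\PP(V_0^\vee)$ as $\mathrm{diag}(1,\zeta,\zeta^2)$. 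This is not scalar, and its cube is trivial, so the projective action has exact order three.
\end{enumerate}

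For the absence of a fixed divisor, use the $G$-equivariance and the explicit sections.
\begin{enumerate}[(1)]
\setcounter{enumi}{3}
\item \textbf{A cyclic orbit inside $V_0$.} The primitive integral sections $s_i$ lie in $\mathscr V$, and their reductions $\bar s_i$ are nonzero elements of $V_0$. Their divisors on $B$ are $\bar D_i=(C+d_0+it)+(C+(i+1)t)+(C+(i+2)t)$, where $d_0=p_0+q_0$ is the specialization. The fixed divisor $F$ of $V_0$ is contained in $\bar D_0\cap\bar D_1\cap\bar D_2$, and it is $T_t$-invariant.
\item \textbf{Excluding a common component.} A common component of the three $\bar D_i$ is a translate of $C$, since all components are translates of the integral curve $C$. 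The $B_i$-parts $C+jt$ are pairwise distinct, so each fixed $C+jt$ lies in only two of the three divisors through its $B$-part. It would therefore have to equal some $A$-part $C+d_0+i t$. Because $\operatorname{Stab}(C)=0$, this forces $d_0\in\langle t\rangle$.
\item \textbf{The main obstacle.} In the bad case, $d_0\in\{0,\pm t\}$ can indeed occur, which is exactly the degeneration. Then the $\bar s_i$ themselves may have a common component. The saturation $\mathscr V$ may then contain limits of $(s_i-\lambda s_j)/\varpi^k$ that are not among the $\bar s_i$. So the argument must use the character decomposition.
\item \textbf{Using the character lines.} A $G$-invariant fixed divisor $F$ would be contained in every $\bar s^\chi$, the generator of $V_0^\chi$. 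I would show this is impossible by a degree count. The curve $F$ is $T_t$-invariant, while no single translate of $C$ is. So $F$ must contain a full $t$-orbit of curves, i.e.\ $F\equiv3\Theta$ numerically, forcing $F=\operatorname{div}$ of every section. That would make $V_0$ one-dimensional, contradicting $\dim V_0=3$.
\item \textbf{Remaining case.} This leaves the possibility that $F$ is a sum of $T_t$-invariant curves not of theta type. I would exclude it by showing every component of a $\bar D_i$, hence of any limit divisor in $|\mathscr M_0|$ arising from these degenerations, is a translate of $C$. The expected tool is that limits in the projective bundle of such divisors are specializations of unions of translates of $\mathscr C$. This is where I expect the real work.
\end{enumerate}
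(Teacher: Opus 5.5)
Your steps (1)--(3) are correct and are the paper's argument. The projectors $e_\chi$ are defined over $R$, and each $\mathscr V^\chi$ is free of rank one. Reduction then gives three character lines in $V_0$, and $\operatorname{diag}(1,\zeta,\zeta^2)$ has exact order three in $\operatorname{PGL}(V_0)$.

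\textbf{The gap.} The fixed-divisor half is left unproved at step (8), and the obstacle you describe in (6)--(8) does not exist. By definition the fixed divisor $F$ of $V_0$ is contained in the divisor of \emph{every} nonzero member of $V_0$. Since $\bar s_0\in V_0$ is nonzero, as you noted in (4), you get $F\le\bar D_0$ at once. Extra elements of the saturation, such as limits of $(s_i-\lambda s_j)/\varpi^k$, can only make $F$ smaller, so they play no role in this upper bound, and the character decomposition is not needed here. Hence every component of $F$ is a translate of $C$, and the ``remaining case'' of non-theta components in (8) is vacuous.

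The stronger claim you planned to prove in (8), that arbitrary limit divisors coming from $\mathscr V$ are unions of translates of $C$, is false in general. Once $V_0$ is known to define a dominant map, its general member is irreducible of class $3\Theta$ by Bertini.

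\textbf{How the argument closes.} Once you have $F\le\bar D_0$:
\begin{enumerate}[(a)]
\item $F$ is $T_t$-invariant, because $V_0$ is $G$-stable.
\item Since $\operatorname{Stab}(C)=0$, the components of $F$ occur in full $T_t$-orbits of three distinct translates.
\item Comparing intersection numbers with $\Theta$ against $\bar D_0\equiv3\Theta$ leaves only $F=0$ or $F=\bar D_0$.
\item If $F=\bar D_0$, every element of $V_0$ is a constant multiple of $\bar s_0$, contradicting $\dim V_0=3$.
\end{enumerate}
This is the paper's argument, except that the paper first identifies the degeneration. For bad $t$ there is $x\in C$ with $2x=t$. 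The involution $z\mapsto t-z$ exchanges $C$ and $C+t$ and fixes $x$ with differential $-1$, so $C\cap(C+t)=2[x]$. Hence $p_0=x$ and $d_0=0$, and every $\bar s_i$ is a multiple of $P=b_0b_1b_2$. Then $F\le B_0+B_1+B_2$, and $G$-invariance gives $F\in\{0,B_0+B_1+B_2\}$.

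So in (6), $d_0=0$ is forced in the bad case, not merely possible. Your corrected argument does not need this and works for any $d_0$.
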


\begin{proof}
Let $\sigma$ generate $G$.  On the generic basis
$(s_0,s_1,s_2)$, the operator $\sigma$ is a weighted three-cycle.  The
linearization was normalized so that $\sigma^3=1$, and therefore its
three eigenvalues are $1,\zeta,\zeta^2$, each once.  Since
$R$ contains $1/3$ and $\zeta$, the idempotents
$e_j=\frac13\sum_{m=0}^2\zeta^{-jm}\sigma^m$ split
$\mathscr V$ into three direct summands.  Each summand has generic rank
one and hence, over the local ring $R$, is free of rank one.  Reduction
therefore gives the three distinct one-dimensional character spaces in
$V_0$.  Thus a generator acts, up to a scalar and a choice of basis, as
$\operatorname{diag}(1,\zeta,\zeta^2)$, which has exact order three in
$\operatorname{PGL}(V_0)$.

Choose $x\in C$ with $2x=t$.
The involution $\iota_t(z)=t-z$ sends $C$ to $C+t$, fixes $x$, and has
differential $-1$ at $x$; it therefore identifies the tangent lines of
$C$ and $C+t$ there.  Since the two distinct theta divisors have
intersection number two, their scheme-theoretic intersection is
$C\cap(C+t)=2[x]$.
It follows that on
the special fiber $p_0=x$, $q_0=-x$, and $d_0=0$.  Hence, on the special
fiber, we have
$\mathscr A_{i,0}=\mathscr B_{i,0}=B_i=C+it$, and the reduction of each
$s_i$ is a nonzero scalar multiple of
\begin{equation}
 P=b_0b_1b_2\in V_0,\qquad \operatorname{div}(P)=B_0+B_1+B_2.
\label{eq:limit-product}
\end{equation}
Indeed, every $s_i$ lies in
$V_K\cap\mathscr E=\mathscr V$, and its primitive nonzero reduction is
a scalar multiple of $P$.  The injection
$V_0\hookrightarrow\mathscr E\otimes_Rk$ therefore proves that
$0\ne P\in V_0$.

Let $F$ be the maximal fixed divisor of $V_0$.  Since $P\in V_0$,
equation~\eqref{eq:limit-product} gives $F\leq B_0+B_1+B_2$.  The three
curves $B_i$ are distinct and irreducible, and $G$ cyclically permutes
them.  Since $V_0$ is $G$-stable, $F$ is $G$-invariant; hence either
$F=0$ or $F=B_0+B_1+B_2$.  In the second case every section in $V_0$
would be $P$ times a global regular function on $B$.  Since
$H^0(B,\mathcal O_B)=\C$, this would give $\dim V_0\leq1$, a
contradiction.  Thus $F=0$.
\end{proof}

\begin{lemma}\label{lem:13-limit-dominant}
The rational map $f_0:B\dashrightarrow\PP(V_0^\vee)$ is dominant.
\end{lemma}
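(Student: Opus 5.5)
Assume, for contradiction, that $f_0$ is not dominant. Its image is then an irreducible curve $\Sigma\subset\PP(V_0^\vee)$. Since $V_0$ is three-dimensional and injects into $H^0(B,\mathscr M_0)$, the image is nondegenerate, so $\Sigma$ is not a line. By Lemma~\ref{lem:13-limit-system} the system $V_0$ has no fixed divisor. Let $f:B\dashrightarrow\Sigma$, let $\Sigma'\to\Sigma$ be the normalization, and use Weil's extension theorem together with the rational-curve argument. The Stein factorization of $B\dashrightarrow\Sigma'$ is a morphism to a smooth curve, since a base-point-free pencil on $B$ with $D^2=0$ behaves as in Lemma~\ref{lem:standard}(iii). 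Either that curve is elliptic and the map is an elliptic quotient $q:B\to E$, or the curve is of higher genus, which is impossible for abelian surfaces: any morphism from $B$ to a curve factors through an abelian quotient, so the curve is elliptic. In both cases every member of $V_0$ is a sum of fibers of $q$, and $\mathscr M_0\equiv rF$ with $F^2=0$. This contradicts $\mathscr M_0^2=18>0$.

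The reduction step needs some care. If $f_0$ has a curve image, the moving part of $V_0$ is composed with a pencil. With no fixed divisor, every divisor in $V_0$ is a sum of fibers of a rational map $g:B\dashrightarrow \Gamma$ to a smooth curve with connected general fibers. I will argue that $\Gamma$ cannot be rational, since then the fibers would form a pencil with base points, and handle both cases directly.

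Case 1: $\Gamma$ is not rational. Then $g$ extends to a morphism, and its fibers are pairwise disjoint with square $0$. Every member of $V_0$, in particular $P=b_0b_1b_2$ from \eqref{eq:limit-product}, is then numerically a multiple of the fiber class. But $\operatorname{div}(P)=B_0+B_1+B_2\equiv3\Theta$ has positive square, a contradiction.

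Case 2: $\Gamma\simeq\PP^1$. Then each member of $V_0$ is a sum of $\deg\Sigma\ge2$ fibers of a pencil $\{F_\lambda\}$ of curves on $B$. The divisor $B_0+B_1+B_2$ is a sum of such fibers. Since the $B_i$ are irreducible, distinct and numerically equal to $\Theta$, the pencil's fibers must be the $B_i$ themselves, possibly with multiplicities, and hence numerically a multiple of $\Theta$. Comparing classes, $3\Theta\equiv mF$ with $m\ge 2$ forces $F\equiv\Theta$ and $m=3$. So the $B_i$ would all lie in one pencil $|\Theta'|$ of divisors linearly equivalent to one another. However, the $B_i=C+it$ are distinct translates of a principal theta divisor, and $h^0(\cO_B(C+it))=1$, so distinct translates are not linearly equivalent: $t\notin K(\cO_B(C))=\{0\}$. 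A pencil of them cannot exist, which is a contradiction.

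The main obstacle is Case 2, namely justifying that the fibers of the pencil must be the components $B_i$. The alternative is that the fibers are reducible curves whose sums reproduce $B_0+B_1+B_2$. I will handle this by noting that the fiber class $F$ satisfies $3\Theta\equiv mF$ with $m=\deg\Sigma\ge2$. Since $\Theta$ is primitive in $\NS$, this forces $m=3$ and $F\equiv\Theta$. Any effective divisor in $|F|$ passing through a component of $B_0+B_1+B_2$ then equals a single $B_i$, because $F\cdot\Theta=2$ and each $B_i$ is irreducible with class $\Theta$.
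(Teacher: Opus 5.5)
Your argument is correct and is essentially the paper's proof, with the two cases taken in the opposite order. A non-rational base is excluded because $f_0$ becomes a morphism with square-zero fibers while $\mathscr M_0^2=18$, so you never need the paper's remark that a plane cubic has normalization of genus at most one. A rational base is excluded because primitivity of $[\Theta]$ forces $F\equiv\Theta$, and then pencil members are linearly equivalent; you route this through $P=b_0b_1b_2$ and $t\notin K(\cO_B(C))$, whereas the paper simply uses $h^0(B,N)=1$ for every $N\equiv\Theta$.

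There are two slips to clean up:
\begin{enumerate}[(i)]
\item Your opening claim that $B\dashrightarrow\Sigma'$ always Stein-factors through a morphism is false over a rational base, which is exactly why your Case~2 is needed.
\item In the last step, a pencil member $F_\lambda\le B_0+B_1+B_2$ with $F_\lambda\equiv\Theta$ equals a single $B_i$ simply by comparing classes (each $B_i\equiv\Theta$), not because $F\cdot\Theta=2$. Also, $m$ there is $\deg\Sigma$ times $\deg(\Gamma\to\Sigma')$ rather than $\deg\Sigma$; this is harmless, since it is still at least $2$.
\end{enumerate}
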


\begin{proof}
The image cannot be a point because $V_0$ has dimension three.  Suppose
that it is an integral plane curve $\Gamma$.  The curve is nondegenerate and thus its degree
$e$ satisfies $e\geq2$.

Resolve the rational map by $\mu:X'\to B$, factor the induced morphism
through the normalization $\widetilde\Gamma$, and take Stein
factorization
$X'\xrightarrow{h}T\xrightarrow{\eta}\widetilde\Gamma$, where $T$ is a
normal projective curve, $h$ has connected fibers, and
$\delta=\deg(\eta)$.  For a general $y\in T$, let $F_y=h^{-1}(y)$ and
let $D$ denote the numerical class of $\mu_*F_y$ on $B$.  Because
$V_0$ has no fixed divisor, pushing a general hyperplane section from
$X'$ to $B$ gives
\begin{equation}
 3[C]=e\delta[D]\qquad\text{in }\operatorname{NS}(B).
\label{eq:theta-fiber-class}
\end{equation}
Indeed, $\mathcal O_{\widetilde\Gamma}(1)$ has degree $e$, its pullback
to $T$ has degree $e\delta$, and a general divisor in that pullback is a
sum of $e\delta$ general fibers of $h$; exceptional divisors disappear
under $\mu_*$.  The pushforward of a general member of the moving linear
system has class $c_1(\mathscr M_0)=3[C]$.

Since $[C]$ is primitive and
$\operatorname{NS}(B)$ is free abelian, there is a homomorphism
$\ell:\operatorname{NS}(B)\to\Z$ with
$\ell([\Theta])=1$.  Applying it to
\eqref{eq:theta-fiber-class} gives
$3=e\delta\,\ell(D)$; hence $e\delta$ divides $3$.  Together with
$e\geq2$ and $\delta\geq1$, this forces
$e=3$, $\delta=1$, and then
\eqref{eq:theta-fiber-class} gives $[D]=[\Theta]$.

An integral plane cubic has normalization of genus zero or one.  If
$T\simeq\widetilde\Gamma\simeq\PP^1$, two general fibers of $h$ are
linearly equivalent, and pushing the divisor of the corresponding
rational function through the birational morphism $\mu$ shows that the
distinct divisors $D_y=\mu_*F_y$ are linearly equivalent on $B$.  Thus
the line bundle $N=\mathcal O_B(D_y)$ has at least two independent
sections.  But
$c_1(N)=[C]$ and $h^0(B,N)=1$, a contradiction.

If $T\simeq\widetilde\Gamma$ is elliptic, the induced rational map
$B\dashrightarrow T$ extends to a morphism.  Its two
general fibers are disjoint, so $D^2=0$, contradicting
$[D]=[C]$.  Both possibilities are impossible,
and $f_0$ is dominant.
\end{proof}

\begin{proposition}\label{prop:all-t}
Let $(B,\Theta)$ be an indecomposable principally polarized complex
abelian surface, let $C\subset B$ be a smooth symmetric theta divisor,
and let $0\ne t\in B[3]$.  There is a $T_t$-equivariant dominant rational
map $B\dashrightarrow\PP^2$ of degree three for a faithful projective
order-three action on $\PP^2$.
\end{proposition}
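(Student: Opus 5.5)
If $t$ is good with respect to $C$, there is nothing new to prove: Proposition~\ref{prop:good-theta} already gives a $T_t$-equivariant dominant rational map $f_t:B\dashrightarrow\PP^2$ of degree three, and the target action is faithful of order three. So assume $t$ is bad. We then use the degeneration set up above. Lemma~\ref{lem:13-good-deformation} provides a DVR $R$ with residue field $\C$ and a family $(\mathscr B,\mathscr C,t)$ whose special fiber is $(B,C,t)$ and whose geometric generic fiber is indecomposable with $t_{\bar K}$ good. After the finite base changes described before Lemma~\ref{lem:13-limit-system}, we also have:
\begin{itemize}
\item the section $p$ of $Z$;
\item the sections $s_i=a_ib_{i+1}b_{i+2}$ of $\mathscr M$;
\item the free module $\mathscr E=\pi_*\mathscr M$ of rank nine, compatible with base change;
\item a $G=\Z/3\Z$-linearization normalized so that $\sigma^3=1$;
\item the saturated $G$-stable lattice $\mathscr V=V_K\cap\mathscr E$.
\end{itemize}
The candidate map on $B$ is the one defined by $V_0=\mathscr V\otimes_R k$.

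\emph{Step 1: the geometric generic fiber.} Over $\bar K$, the point $p_{\bar K}\in\mathscr C_{\bar K}\cap(\mathscr C_{\bar K}+t)$ and $d=p+q$ are exactly the data used in the proof of Proposition~\ref{prop:good-theta}, and $t_{\bar K}$ is good. That proof therefore shows that $V_{\bar K}=\langle s_0,s_1,s_2\rangle$ defines a dominant rational map of degree three. Degree and dominance of a rational map given by a fixed linear system are unchanged by extending scalars from $K$ to $\bar K$, so $f_K$ is dominant with $\deg f_K=3$.

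\emph{Step 2: the special fiber.} Lemma~\ref{lem:13-limit-system} shows that $V_0$ has dimension three, has no fixed divisor, and carries a faithful order-three projective $G$-action. Lemma~\ref{lem:13-limit-dominant} shows that $f_0:B\dashrightarrow\PP(V_0^\vee)$ is dominant. All hypotheses of Lemma~\ref{lem:degree-specialization} now hold: the family is smooth and projective with integral fibers, $\mathscr E$ commutes with base change, $\mathscr V$ is saturated of rank three, and both $f_K$ and $f_0$ are dominant. That lemma gives $\deg f_0\le\deg f_K=3$. On the other hand, every dominant rational map from an abelian surface to $\PP^2$ has degree at least three, by the Alzati--Pirola bound $\irr(A)>\dim A$ recalled in \eqref{eq:universal-bounds}. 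Hence $\deg f_0=3$.

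\emph{Step 3: equivariance.} The subspace $V_0\subset H^0(B,\mathscr M_0)$ is stable under the reduced linearization of $T_t$, so $f_0\circ T_t=\bar\sigma\circ f_0$, where $\bar\sigma$ is the projectivized action on $\PP(V_0^\vee)$. By Lemma~\ref{lem:13-limit-system}, $\bar\sigma$ is conjugate to $\operatorname{diag}(1,\zeta,\zeta^2)$, so it has exact order three. Choosing a basis identifies $\PP(V_0^\vee)$ with $\PP^2$, which proves the proposition.

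The step I expect to need the most care is Step 1. One must check that the generic argument really takes place on the geometric fiber with the chosen $p$: the point $p_{\bar K}$ lies on both $\mathscr C_{\bar K}$ and $\mathscr C_{\bar K}+t$, and $\mathscr C_{\bar K}$ is smooth and symmetric. Since the base-point analysis in Proposition~\ref{prop:good-theta} uses only these properties together with goodness, it transfers verbatim. The remaining ingredients, namely the dominance of $f_0$ and the lower bound on the degree, are already available above.
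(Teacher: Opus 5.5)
Your proposal is correct and follows the paper's own proof essentially step for step. The good case is Proposition~\ref{prop:good-theta}; in the bad case the paper likewise applies that proposition on the geometric generic fiber of the degeneration, descends dominance and degree three to $K$, uses Lemmas~\ref{lem:13-limit-system} and~\ref{lem:13-limit-dominant} on the special fiber, and combines Lemma~\ref{lem:degree-specialization} with the Alzati--Pirola bound to get $\deg f_0=3$ — the only difference being that you write out the equivariance of $f_0$ and the exact order of the target action as a separate step, which the paper reads off directly from Lemma~\ref{lem:13-limit-system}.
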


\begin{proof}
If $t$ is good, this is Proposition~\ref{prop:good-theta}.  If $t$ is
not good, use the one-parameter construction above.  By
Lemmas~\ref{lem:13-limit-system} and~\ref{lem:13-limit-dominant}, the
specialized space $V_0$ defines a dominant equivariant rational map to
$\PP(V_0^\vee)$, and the target action is faithful.  The proof of
Proposition~\ref{prop:good-theta} is algebraic and remains valid over
every algebraically closed field of characteristic zero.  Applied to the
geometric generic fiber, it shows that $(f_K)_{\bar K}$ is dominant of
degree three.  Since source and target are geometrically integral,
dominance and degree descend to $K$.  Lemma~\ref{lem:degree-specialization} gives
$\deg(f_0)\leq3$, while the Alzati--Pirola lower bound recalled in the
Introduction gives $\deg(f_0)\geq3$.  Hence we obtain
$\deg(f_0)=3$.
\end{proof}

\subsection{A cyclic principally polarized cover}

\begin{lemma}\label{lem:13-cover}
Let $(A,L)$ be a complex abelian surface polarized of type $(1,3)$.
There are a principally polarized abelian surface $(B,\Theta)$, an \'etale
cyclic isogeny $q:B\to A$ of degree three, and a nonzero $t\in B[3]$
such that $\ker(q)=\langle t\rangle$ and
$q^*c_1(L)=3c_1(\Theta)$.
\end{lemma}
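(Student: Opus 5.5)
The plan is to construct $B$ as a dual-type cover: choose a cyclic subgroup of order three in $K(L)$, take the corresponding quotient of $A$, and dualize.

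Since $L$ has type $(1,3)$, we have $K(L)\simeq(\Z/3\Z)^2$, and the Weil pairing $e^L$ on $K(L)$ is nondegenerate. Every cyclic subgroup $H\subset K(L)$ of order three is isotropic, because $e^L$ is alternating. By the descent criterion \cite[Corollary~6.3.5]{BL}, $L$ descends along $p:A\to A':=A/H$ to a line bundle $L'$ with $p^*L'\simeq L$. Degrees give $(L')^2=L^2/3=2$, so $L'$ is a principal polarization on $A'$.

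Dualize $p$ to obtain $\hat p:\widehat{A'}\to\widehat A$, also of degree three. Set $B:=A'$ and define
\[
  q:=\phi_L^{-1}\circ\ldots
\]
More cleanly, use the identity $\phi_L=\hat p\circ\phi_{L'}\circ p$ and look for $q:B\to A$ with $q\circ p=[3]_A$. Since $H\subset A[3]$, the map $[3]_A$ factors through $p$, giving $q:A'\to A$ with $q\circ p=[3]$. Its degree is $81/3=27$, which is too large, so this is not the right map.

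Instead I would take $q$ to be the composite $A'\xrightarrow{\phi_{L'}}\widehat{A'}\xrightarrow{\hat p}\widehat A$, followed by an identification of $\widehat A$ with $A$. To obtain that identification, note that $\phi_L$ has kernel $K(L)$, and $\ker(\hat p)$ is dual to $H$.

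The cleaner route is as follows. Let $H^\perp=H$, which holds because $H$ is maximal isotropic. Then $\phi_L=\hat p\,\phi_{L'}\,p$ and $\ker(\phi_L)=K(L)$. Choose a second cyclic subgroup $H_2\subset K(L)$ with $K(L)=H\oplus H_2$. The map $p$ sends $H_2$ isomorphically onto a subgroup $p(H_2)\subset A'$ of order three. The isogeny $\phi_L$ factors through $A\to A/K(L)\simeq\widehat A$, and this gives $A/H\to A/K(L)$, which is the quotient of $A'=B$ by $p(H_2)$. So I set
\[
  q:B=A/H\longrightarrow A/K(L)\xrightarrow{\ \sim\ }\widehat A .
\]
This is cyclic of degree three with kernel $\langle t\rangle$, where $t=p(h_2)$ for a generator $h_2$ of $H_2$. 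It is étale, since every isogeny in characteristic zero is étale.

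The main obstacle is that $q$ lands in $\widehat A$ rather than in $A$. I would handle this by running the whole construction on $\widehat A$ with the dual polarization $L_\delta$, which is again of type $(1,3)$ \cite[Section~14.4]{BL}. Then $\widehat{\widehat A}=A$, and the same recipe produces $q:B\to A$ with $B=\widehat A/H$ principally polarized by the descended bundle $\Theta$, where $\ker q=\langle t\rangle$.

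It remains to check $q^*c_1(L)=3c_1(\Theta)$. Numerically, $(q^*L)^2=3L^2=18=(3\Theta)^2$. To conclude equality of classes and not merely of squares, I would compare the associated homomorphisms. One verifies $\phi_{q^*L}=\hat q\,\phi_L\, q$, and using $\phi_{L_\delta}\phi_L=3$ together with the factorization $\phi_{L_\delta}=\hat r\,\phi_\Theta\, r$, where $r:\widehat A\to B$, one gets $\phi_{q^*L}=3\phi_\Theta$. Since $\NS(B)$ embeds into $\operatorname{Hom}(B,\widehat B)$, this gives the claim.
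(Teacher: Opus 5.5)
Your final construction is correct, but it is phrased differently from the paper's. The paper works directly with the Riemann form. It takes a symplectic basis $e_1,e_2,f_1,f_2$ of $\Lambda$ with $E_L(e_1,f_1)=1$ and $E_L(e_2,f_2)=3$, forms the index-three sublattice $\Lambda'=\Z(3e_1)\oplus\Z e_2\oplus\Z f_1\oplus\Z f_2$, and sets $B=V/\Lambda'$. Integrality and unimodularity of $\tfrac13E_L|_{\Lambda'}$, and the identity $q^*c_1(L)=3c_1(\Theta)$, are then read off from the Gram matrix.

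You instead descend the dual polarization $L_\delta$ along a cyclic $H\subset K(L_\delta)$ and take $q:\widehat A/H\to\widehat A/K(L_\delta)\simeq A$ induced by $\phi_{L_\delta}$. The two constructions give the same object. Identify $\widehat A$ with $V/\Lambda(L)$, where $\Lambda(L)=\langle e_1,e_2/3,f_1,f_2/3\rangle$; then $\phi_{L_\delta}$ is induced by $v\mapsto 3v$. Taking $H$ generated by the class of $f_1/3$, your $B=V/\langle e_1,e_2/3,f_1/3,f_2/3\rangle$ becomes $V/\Lambda'$ after rescaling by $3$, and $q$ becomes the map induced by the identity.

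The paper's route is shorter, since the Chern-class identity is tautological there. Yours is coordinate-free but relies on $\phi_{L_\delta}\phi_L=3_A$ and on a cancellation step you should write out. From $q\circ r=\phi_{L_\delta}$ and $\widehat{\phi_{L_\delta}}=\phi_{L_\delta}$ you get
\[
\hat r\,\phi_{q^*L}\,r=\phi_{L_\delta}\phi_L\phi_{L_\delta}=3\phi_{L_\delta}=\hat r\,(3\phi_\Theta)\,r .
\]
Then surjectivity of $r$, finiteness of $\ker\hat r$, and connectedness of $B$ give $\phi_{q^*L}=3\phi_\Theta$.

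In a write-up, delete the abandoned attempts (the map with $q\circ p=[3]_A$ and the map to $\widehat A$) and start directly on $(\widehat A,L_\delta)$.
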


\begin{proof}
Write $A=V/\Lambda$ and choose a symplectic basis
$e_1,e_2,f_1,f_2$ for the integral Riemann form $E_L$, with
$E_L(e_1,f_1)=1$ and $E_L(e_2,f_2)=3$.  Set
$\Lambda'=\Z(3e_1)\oplus\Z e_2\oplus
\Z f_1\oplus\Z f_2$ and $B=V/\Lambda'$.  The identity on
$V$ induces an \'etale cyclic isogeny $q:B\to A$ of degree three, with
kernel generated by $t=e_1+\Lambda'$.  The form
$E_\Theta=\frac13E_L|_{\Lambda'}$ is integral, positive, and unimodular.
It defines a principal polarization and the equality of Riemann forms is
exactly $q^*c_1(L)=3c_1(\Theta)$.
\end{proof}

\subsection{The decomposable covering surface}

\begin{lemma}\label{lem:equivariant-product}
Let $B=E\times F$ be a product of elliptic curves and let
$t=(\alpha,\beta)\in B[3]$ with $\alpha,\beta\ne0$.  There is a
$T_t$-equivariant dominant rational map $B\dashrightarrow\PP^2$ of
degree three, for a faithful projective order-three action on $\PP^2$.
\end{lemma}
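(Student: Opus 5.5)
The plan is to rerun the tangential-triangle construction of Proposition~\ref{prop:product}, but to choose the order-nine points so that translation by $\alpha$ on $E$ and by $\beta$ on $F$ cyclically relabel the two triangles. Then $T_t$ will permute the three product sections.

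\emph{Choice of triangle on $E$.} Multiplication by $3$ is surjective on $E$, so there is $p\in E$ with $-3p=\alpha$. Since $\alpha\neq0$ has order three, $p$ has exact order nine. As in Proposition~\ref{prop:product}, put $P_1=p$, $P_2=-2p$, $P_3=4p$. Using $9p=0$, one checks
\[
P_2-P_1=-3p,\qquad P_3-P_2=6p=-3p,\qquad P_1-P_3=-3p.
\]
Hence $P_i+\alpha=P_{i+1}$, with indices modulo three.

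Since $3\alpha=0$, the line bundle $\cO_E(3[0_E])$ is invariant under $T_\alpha$. Translation by $\alpha$ therefore acts linearly, up to scalar, on $H^0(E,\cO_E(3[0_E]))$. It induces a projective automorphism of $\PP^2$ preserving the cubic $E$, and hence mapping tangent lines to tangent lines. Comparing divisors via \eqref{eq:E-orders} and \eqref{eq:translate-convention}, $T_\alpha^*x_{i+1}$ has the divisor of $x_i$ (or the reverse shift, depending on sign conventions). Consequently $T_\alpha^*x_{i+1}=\lambda_i x_i$ for nonzero scalars $\lambda_i$.

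\emph{Choice of triangle on $F$.} Choose $q'\in F$ with $3q'=\beta$, again of exact order nine. Put
\[
Q_3=q',\qquad Q_2=-2q',\qquad Q_1=4q'.
\]
Then the tangent at $Q_3$ meets $F$ again at $Q_2$, the tangent at $Q_2$ meets it again at $Q_1$, and the tangent at $Q_1$ meets it again at $Q_3$. This is exactly the opposite orientation \eqref{eq:Eprime-orders}. Moreover $Q_2-Q_1=-6q'=3q'=\beta$ and $Q_3-Q_2=3q'=\beta$, so again $Q_i+\beta=Q_{i+1}$. The table \eqref{eq:Eprime-orders} is compatible with the shift $i\mapsto i+1$; for example, $\divisor(y_1)=Q_2+2Q_3$ shifts to $Q_3+2Q_1=\divisor(y_2)$. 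Thus the same argument gives $T_\beta^*y_{i+1}=\mu_i y_i$ with the same direction of index shift as on $E$.

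\emph{Equivariance and degree.} With these choices, $T_t^*(x_{i+1}\boxtimes y_{i+1})=\lambda_i\mu_i\,x_i\boxtimes y_i$. So $T_t$ preserves the span of the three product sections and acts on it by a weighted three-cycle. The base points, the base ideals \eqref{eq:product-ideals}, the blow-up and the computation $H^2=3$ are exactly those of Proposition~\ref{prop:product}, since only the specific points were chosen. The resulting map is therefore dominant of degree three and $T_t$-equivariant.

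A weighted three-cycle matrix is never scalar, but its cube is scalar. Hence it has exact order three in $\operatorname{PGL}_3$, and the target action is faithful.

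\emph{Main obstacle.} The only delicate point is matching the direction of the cyclic shift on the two factors with the opposite orientations required by \eqref{eq:E-orders} and \eqref{eq:Eprime-orders}. The choices $-3p=\alpha$ and $3q'=\beta$ above are made precisely so that both factors shift the index in the same direction, and I would verify this carefully against the translation convention \eqref{eq:translate-convention}.
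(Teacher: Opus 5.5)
Your proposal is correct and is essentially the paper's own proof: the paper also picks order-nine points with $3x=-\alpha$ on $E$ and $3z=\beta$ on $F$, labels the vertices $x+i\alpha$ and $z+j\beta$ (your $P_{i+1}$ and $Q_j$) with opposite tangential orientations so that $T_\alpha^*$ and $T_\beta^*$ shift the section indices in the same direction, and then reuses the base-ideal count of Proposition~\ref{prop:product} to get $H^2=3$ and a weighted three-cycle action on $\PP^2$. Your sign check ($\divisor(T_\alpha^*s)=\divisor(s)-\alpha$, giving $T_\alpha^*x_{i+1}=\lambda_i x_i$ and likewise on $F$) is right, so the hedge about a possible reverse shift is unnecessary.
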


\begin{proof}
Choose $x\in E$ and $z\in F$ with $3x=-\alpha$ and $3z=\beta$; both
have exact order nine.  Put $x_i=x+i\alpha$ and $z_i=z-i\beta$.  In the
cubic embedding of $E$ by $|3[0_E]|$, the tangent at $x_i$ meets $E$
again at $-2x_i=x_{i+1}$.  The three points $x_i$ are noncollinear,
because $x_0+x_1+x_2=3x=-\alpha\ne0$.  The three tangents in question
are therefore the three sides of this triangle, and their equations form
a basis.  Choose the resulting coordinate sections
$a_i\in H^0(E,\mathcal O_E(3[0_E]))$ with
$\operatorname{div}(a_i)=2x_{i+1}+x_{i+2}$ and
$T_\alpha^*\operatorname{div}(a_i)= \operatorname{div}(a_{i-1})$.  At $x_0,x_1,x_2$ the order triples of
$(a_0,a_1,a_2)$ are respectively $(0,1,2)$, $(2,0,1)$, and $(1,2,0)$.

Put $y_0=z_0$, $y_1=z_2$, and $y_2=z_1$.  Choose
$n_i\in H^0(F,\mathcal O_F(3[0_F]))$ with
$\operatorname{div}(n_i)=2y_{i-1}+y_{i+1}$ and
$T_\beta^*\operatorname{div}(n_i) = \operatorname{div}(n_{i-1})$.  At $y_0,y_1,y_2$ the order triples of
$(n_0,n_1,n_2)$ are respectively $(0,2,1)$, $(1,0,2)$, and $(2,1,0)$.
Here the corresponding three points are again noncollinear, since their
sum is $3z=\beta\ne0$, so the sections $n_i$ form a basis as well.
The three products $a_i\boxtimes n_i$ define a rational map to $\PP^2$.
Their common line bundle has square eighteen.  The base support consists
of the six points $(x_i,y_j)$ with $i\ne j$; the exact base ideal is
$\mathfrak m^2$ at $(x_0,y_2),(x_1,y_0),(x_2,y_1)$ and is
$\mathfrak m$ at the other three points, as the two displayed order
tables show.  One blow-up therefore resolves the linear system, and the
moving divisor has square $18-3\cdot2^2-3\cdot1^2=3$.  A base-point-free
divisor of positive square cannot define a map to a curve, so the map is
dominant and has degree three.  Finally, translation by $t$ cyclically
permutes the
three product sections up to nonzero scalars, which gives a faithful
projective action of order three on the target.
\end{proof}

\subsection{Descent and the type \texorpdfstring{$(1,3)$}{(1,3)} theorem}

\begin{lemma}\label{lem:cyclic-descent}
Let $G\simeq\Z/3\Z$ act faithfully on a smooth projective
surface $B$ and faithfully projectively on $\PP^2$.  If a dominant
rational map $f:B\dashrightarrow\PP^2$ of degree three is
$G$-equivariant, then the induced rational map
$B/G\dashrightarrow\PP^2/G$ has degree three, and $\PP^2/G$ is rational.
\end{lemma}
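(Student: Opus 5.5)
The plan is a degree count in a commutative square, together with the classical rationality of cyclic quotients of $\PP^2$. Let $\pi_B:B\to B/G$ and $\pi_P:\PP^2\to\PP^2/G$ be the quotient maps. Both have degree three, because $G$ acts faithfully on $B$ and on $\PP^2$ and an automorphism of an irreducible variety that fixes a dense open set is trivial, so each nontrivial element has a proper closed fixed locus. Equivariance $f\circ g=g\circ f$, which holds as rational maps on a dense open set, shows that $\pi_P\circ f$ is constant on $G$-orbits. Hence it factors through a rational map $\bar f:B/G\dashrightarrow\PP^2/G$. Equivalently, on function fields we have $f^*:\C(\PP^2)\hookrightarrow\C(B)$, compatible with the $G$-actions, and restricting to invariants gives $\C(\PP^2)^G\hookrightarrow\C(B)^G$. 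The map $\bar f$ is dominant because $\pi_P\circ f$ is dominant.

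For the degree, I will compute $[\C(B):\C(\PP^2)^G]$ along the two routes through the diagram. Through $f$ it equals $[\C(B):\C(\PP^2)]\cdot[\C(\PP^2):\C(\PP^2)^G]=3\cdot3=9$. Through $\pi_B$ it equals $[\C(B):\C(B)^G]\cdot[\C(B)^G:\C(\PP^2)^G]=3\cdot\deg\bar f$. The facts $[\C(B):\C(B)^G]=3$ and $[\C(\PP^2):\C(\PP^2)^G]=3$ are Artin's theorem for the faithful actions of $G$ on these function fields. Comparing the two routes gives $\deg\bar f=3$.

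For the rationality of $\PP^2/G$, I lift the projective order-three action to $\operatorname{GL}_3(\C)$. A generator $g\in\operatorname{PGL}_3$ with $g^3=1$ lifts to a matrix $M$ with $M^3$ scalar. After rescaling $M$ by a cube root of that scalar we get $M^3=I$, so $M$ is diagonalizable as $\operatorname{diag}(\zeta^{a},\zeta^{b},\zeta^{c})$. The group then acts on the affine chart $\{x_0\neq0\}\simeq\mathbf A^2$ linearly and diagonally. The quotient of $\mathbf A^2$ by a finite abelian diagonal group is a toric variety, hence rational; alternatively, Castelnuovo's criterion applies, since a resolution of $\PP^2/G$ is dominated by a rational surface and so has $q=P_2=0$. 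Therefore $\PP^2/G$ is rational.

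I expect no single step to be a serious obstacle. The only care point is making sure $\bar f$ is a well-defined rational map, and working with invariant function fields handles this cleanly.
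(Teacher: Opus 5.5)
Your proposal is correct and follows essentially the paper's proof: the degree is obtained from the same tower-law count, $[\C(B):\C(\PP^2)^G]=3\cdot 3=3\cdot\deg\bar f$ via Artin's theorem, and rationality of $\PP^2/G$ from the same step of rescaling a lift to order three, diagonalizing it, and restricting to an invariant affine chart. The only difference is that you conclude rationality by toric geometry (or Castelnuovo's criterion), whereas the paper writes the invariant field explicitly as $\C(u^3,v)$, $\C(u/v,v^3)$, or $\C(u^3,uv)$.
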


\begin{proof}
A finite-order projective transformation is diagonalizable.  After
scaling a lift, a generator acts as
$\operatorname{diag}(1,\zeta^a,\zeta^b)$, where
$a,b\in\{0,1,2\}$ are not both zero.  On the affine chart with coordinates
$(u,v)$, the invariant field is, after possibly interchanging the
coordinates, one of
$\C(u^3,v)$, $\C(u/v,v^3)$, or $\C(u^3,uv)$.  Thus the quotient surface
$\PP^2/G$ is rational.

Let $F=\C(B)$ and let $K=f^*\C(\PP^2)$.  Equivariance makes $K$ a
$G$-stable subfield.  Faithfulness on $B$ and on the target gives
$[F:F^G]=[K:K^G]=3$, while $[F:K]=3$.  The tower law yields
$[F^G:K^G]=[F:K][K:K^G]/[F:F^G]=3$, which is the degree of the descended
map.
\end{proof}

\begin{theorem}\label{thm:type13}
If a complex abelian surface $A$ admits a polarization of type $(1,3)$,
then $\irr(A)=3$.
\end{theorem}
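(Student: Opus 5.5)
Starting from the type-$(1,3)$ polarization $L$ on $A$, we apply Lemma~\ref{lem:13-cover}. This gives a principally polarized surface $(B,\Theta)$, an étale cyclic isogeny $q:B\to A$ of degree three, and a point $0\neq t\in B[3]$ with $\ker(q)=\langle t\rangle$. Thus $A\simeq B/\langle t\rangle$, where $G=\langle T_t\rangle\simeq\Z/3\Z$ acts freely, and hence faithfully, on $B$ by translation. The plan is to produce a $T_t$-equivariant dominant rational map $f:B\dashrightarrow\PP^2$ of degree three for a faithful projective order-three action on the target. We then descend it with Lemma~\ref{lem:cyclic-descent}.

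We split according to Proposition~\ref{prop:ppas-dichotomy} applied to $(B,\Theta)$.

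In the indecomposable case, we take the smooth symmetric genus-two theta divisor $C$ containing $0_B$. Proposition~\ref{prop:all-t} then supplies the required equivariant cubic map for this particular $t$, whether $t$ is good or bad; the bad case was already handled by the degeneration argument.

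In the decomposable case we write $B=E\times F$ with the product principal polarization. We then want to apply Lemma~\ref{lem:equivariant-product}, which requires $t=(\alpha,\beta)$ with both $\alpha,\beta$ nonzero. The main obstacle is the sub-case where one coordinate vanishes, say $t=(\alpha,0)$. Then $A=B/\langle t\rangle\simeq (E/\langle\alpha\rangle)\times F$ is itself a product of elliptic curves, so Proposition~\ref{prop:product} gives $\irr(A)\le 3$ directly, and no descent is needed. A subtlety is that the decomposition $B\simeq E\times F$ as a principally polarized surface is unique up to swapping factors, since the theta divisor has exactly two elliptic components. So the dichotomy ``both coordinates nonzero'' versus ``some coordinate zero'' is well defined, and these two sub-cases exhaust all possibilities.

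In the cases requiring descent, Lemma~\ref{lem:cyclic-descent} shows that $B/G\simeq A\dashrightarrow \PP^2/G$ has degree three and that $\PP^2/G$ is rational. Composing with a birational map $\PP^2/G\dashrightarrow\PP^2$ yields a dominant degree-three rational map $A\dashrightarrow\PP^2$, so $\irr(A)\le3$. The reverse inequality $\irr(A)\ge3$ is the Alzati--Pirola bound \eqref{eq:universal-bounds}. Together these give $\irr(A)=3$.
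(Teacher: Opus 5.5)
Your proposal is correct and follows the paper's proof essentially step for step: Lemma~\ref{lem:13-cover} gives the cover, Proposition~\ref{prop:all-t} handles the indecomposable case, the decomposable case splits according to whether a coordinate of $t$ vanishes, and you finish with descent via Lemma~\ref{lem:cyclic-descent} and the Alzati--Pirola bound. Your remark on the uniqueness of the product decomposition is harmless but unnecessary, since for any chosen decomposition the two sub-cases already exhaust all possibilities.
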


\begin{proof}
Use Lemma~\ref{lem:13-cover} to write $A=B/\langle t\rangle$ with
$(B,\Theta)$ principally polarized.  If $(B,\Theta)$ is indecomposable,
Proposition~\ref{prop:all-t} supplies the required equivariant cubic.  If
it is decomposable, write $B=E\times F$ and $t=(\alpha,\beta)$.  When one
component is zero, $A$ is itself a product of elliptic curves and
Proposition~\ref{prop:product} applies.  When both are nonzero,
Lemma~\ref{lem:equivariant-product} supplies the equivariant cubic on
$B$.  In every equivariant case Lemma~\ref{lem:cyclic-descent} gives a
degree-three rational map from $A$ to the rational surface $\PP^2/G$.
Composing it with a birational map $\PP^2/G\dashrightarrow\PP^2$
preserves the degree.  Hence we obtain $\irr(A)\leq3$, and the
Alzati--Pirola lower bound recalled in the
Introduction gives equality.
\end{proof}

\section{Polarizations of type \texorpdfstring{$(1,6)$}{(1,6)}}
\label{sec:16}

Moretti constructs the degree-three map for a general
$(1,6)$-polarized abelian surface from a rank-two direct image and the
good-pairs correspondence; see \cite[Proposition~1.5 and \S3]{Moretti} or Subsection~\ref{subsec:moretti-good-pairs}.
Under the auxiliary hypotheses imposed there, Grossi--Moretti further
study its fibers and symmetries
\cite[Theorems~2.1--2.2]{GrossiMoretti}.  We use the same construction,
but retain only the unconditional good-pairs
correspondence and treat the possible divisorial failure of global
generation, as well as the case in which the resulting map is composed
with a pencil, separately.

\subsection{Construction of the good pair}

\begin{proposition}\label{prop:16-cover}
Let $(A,L)$ be a complex abelian surface polarized of type $(1,6)$.
There exist a degree-two \emph{\'etale} isogeny
$\pi:B\to A$, with kernel generated by a nonzero point $b\in B[2]$,
and a line bundle $M$ of type $(1,3)$ on $B$ such that
$2c_1(M)=\pi^*c_1(L)$.  If
\[
 E:=\pi_*M,\qquad W:=H^0(A,E)=H^0(B,M),\qquad Q:=\det E,
\]
then
\[
 \operatorname{rk}E=2,\quad \dim W=3,\quad c_2(E)=3,
 \quad [Q]=[L]\text{ in }\operatorname{NS}(A),
 \quad \pi^*E\simeq M\oplus T_b^*M.
\]
\end{proposition}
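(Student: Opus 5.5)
The construction parallels Lemma~\ref{lem:13-cover}, now using the lattice directly. Write $A=V/\Lambda$ and choose a symplectic basis $e_1,e_2,f_1,f_2$ of $\Lambda$ for the Riemann form $E_L$, with $E_L(e_1,f_1)=1$ and $E_L(e_2,f_2)=6$. We set
\[
  \Lambda'=\Z e_1\oplus\Z(2e_2)\oplus\Z f_1\oplus\Z f_2 ,
\]
so that $\Lambda'\subset\Lambda$ has index two, and put $B=V/\Lambda'$. The identity of $V$ then induces an étale isogeny $\pi:B\to A$ of degree two, whose kernel is generated by $b=e_2+\Lambda'$; this is a nonzero point of $B[2]$.

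On $\Lambda'$ the restricted form satisfies $E_L(e_1,f_1)=1$ and $E_L(2e_2,f_2)=12$. We want half of $E_L|_{\Lambda'}$, and this is not integral in the basis $(e_1,2e_2,f_1,f_2)$ because of the $1$. So this choice of sublattice is wrong, and I will use $\Lambda'=\Z(2e_1)\oplus\Z e_2\oplus\Z f_1\oplus\Z f_2$ instead, with $b=e_1+\Lambda'$. Then $\tfrac12E_L|_{\Lambda'}$ has values $1$ on $(2e_1,f_1)$ and $3$ on $(e_2,f_2)$, and all other pairings of basis vectors vanish. It is therefore integral, positive, and of type $(1,3)$. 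By Appell--Humbert (or \cite[Section~3.1]{BL}) it is the first Chern class of an ample line bundle $M_1$ of type $(1,3)$. Since $\pi^*c_1(L)=2c_1(M_1)$ in $\NS(B)$, the bundles $\pi^*L$ and $M_1^{\otimes2}$ differ by an element of $\Pic^0(B)$.

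At this point I see two options. If the statement only requires the equality of classes $2c_1(M)=\pi^*c_1(L)$, then $M=M_1$ already works. If one also wants $\det E\simeq L$ on the nose, we adjust $M_1$ by a suitable $\alpha\in\Pic^0(B)$, using that $\operatorname{Nm}:\Pic^0(B)\to\Pic^0(A)$ is surjective. In either case only $[Q]=[L]$ is needed, which is what the statement asserts.

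The numerical invariants then follow from general facts. The pushforward $E=\pi_*M$ has rank $\deg\pi=2$. Since $\pi$ is finite, $H^0(A,E)=H^0(B,M)$, which has dimension $h^0(M)=3$ by Lemma~\ref{lem:standard}(ii). Base change along the étale map gives
\[
  \pi^*\pi_*M\simeq M\oplus T_b^*M,
\]
by the formula in Subsection~\ref{subsec:conventions}.

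For the Chern classes, pull back. We have $\pi^*c_1(E)=c_1(M)+c_1(T_b^*M)=2c_1(M)=\pi^*c_1(L)$. Because $\pi^*$ is injective on $\NS\otimes\mathbf{Q}$ and $\NS(A)$ is torsion-free, this gives $[Q]=c_1(E)=[L]$. Next, $\pi^*c_2(E)=c_1(M)\cdot c_1(T_b^*M)=M^2=6$, and since degrees of zero-cycles multiply by $\deg\pi=2$ under pullback, $c_2(E)=3$.

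Alternatively, Grothendieck--Riemann--Roch or Riemann--Roch on $A$ gives $\chi(E)=\tfrac12c_1^2-c_2=6-c_2$, together with $\chi(E)=\chi(M)=3$ (the higher cohomology of $E$ vanishes because $\pi$ is finite). This again yields $c_2(E)=3$ and serves as a consistency check.

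The main obstacle is choosing the index-two sublattice correctly, so that halving the restricted form is integral. As shown above, one must double $e_1$ (the partner of the elementary divisor $1$), not $e_2$. Everything else is formal.
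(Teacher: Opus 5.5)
Your proposal is correct and is essentially the paper's argument: you use the same index-two sublattice spanned by $2e_1,e_2,f_1,f_2$, on which half the restricted Riemann form has type $(1,3)$, then finite flat base change for $\pi^*\pi_*M\simeq M\oplus T_b^*M$, and pullback of $c_1$ and $c_2$ to get $[Q]=[L]$ (via injectivity of $\pi^*$ on $\NS(A)$) and $2c_2(E)=M^2=6$. In a write-up, drop the initial wrong choice $\Z e_1\oplus\Z(2e_2)\oplus\Z f_1\oplus\Z f_2$; the Riemann--Roch consistency check and the remark about adjusting $M$ by $\Pic^0(B)$ are harmless extras that the statement does not need.
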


\begin{proof}
This is the lattice construction underlying the quotient presentation
used in \cite[\S3, first paragraph]{Moretti}.  Concretely, for a symplectic
basis $e_1,e_2,f_1,f_2$ of the Riemann form of $L$, pass to the
index-two sublattice generated by $2e_1,e_2,f_1,f_2$; one half of the
restricted Riemann form has type $(1,3)$.  Finite flat base change gives
\[
 \pi^*\pi_*M\simeq M\oplus T_b^*M.
\]
It follows that $\pi^*c_1(Q)=2c_1(M)=\pi^*c_1(L)$, and the injectivity
of $\pi^*:\operatorname{NS}(A)\to\operatorname{NS}(B)$ gives
$[Q]=[L]$.  Finally,
\[
 2c_2(E)=c_2(\pi^*E)=c_1(M)c_1(T_b^*M)=M^2=6,
 \qquad h^0(B,M)=\frac{M^2}{2}=3.
\]
\end{proof}

\begin{lemma}\label{lem:16-product-cover}
If $B$ in Proposition~\ref{prop:16-cover} is a product of elliptic
curves, then $A$ is a product of elliptic curves or admits a
polarization of type $(1,2)$.  In either case $\irr(A)=3$.
\end{lemma}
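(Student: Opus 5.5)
We work on the level of lattices.  Write $B=E_1\times E_2$ and $\ker\pi=\langle b\rangle$ with $b\in B[2]$ nonzero, so that $A=B/\langle b\rangle$.  Write $b=(\beta_1,\beta_2)$ with $\beta_i\in E_i[2]$.  We split into two cases according to whether one of the components $\beta_i$ vanishes.

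\emph{First case: one component vanishes.}  Say $\beta_2=0$.  Then
\[
  A\simeq (E_1/\langle\beta_1\rangle)\times E_2,
\]
which is a product of elliptic curves, and Proposition~\ref{prop:product} gives $\irr(A)\leq 3$.

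\emph{Second case: both components are nonzero.}  Consider the classes $F_1=[\{0\}\times E_2]$ and $F_2=[E_1\times\{0\}]$ on $B$, with $F_1^2=F_2^2=0$ and $F_1\cdot F_2=1$.  The class $N=F_1+F_2$ is the product principal polarization, and we want to show that it descends to $A$ with type $(1,2)$.  Equivalently, since $\pi^*$ multiplies squares by $\deg\pi=2$, we look for a class $D$ on $A$ with $\pi^*D=N$, so that $D^2=N^2/2=1$.  This is impossible because the form on $\operatorname{NS}(A)$ is even, so $N$ itself cannot descend.  Instead, use pushforward: $\pi_*F_i$ is the class of the image curve $\bar E_i=\pi(E_i)$.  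Since $\beta_i\neq0$, we have $b\notin E_i$, so $\pi|_{E_i}$ is injective and $\bar E_i\simeq E_i$ is an elliptic curve in $A$.  Moreover $\pi^*\bar E_i=F_i+T_b^*F_i\equiv 2F_i$.  Hence
\[
  \bar E_1\cdot\bar E_2=\tfrac12\,\pi^*\bar E_1\cdot\pi^*\bar E_2=\tfrac12\cdot 4=2,
\]
and the class $D=\bar E_1+\bar E_2$ has $D^2=4$.  By Lemma~\ref{lem:primitive-type}, if $D$ is primitive then it is a polarization of type $(1,2)$.  If $D$ is not primitive, we have $D=2D'$ with $D'^2=1$, which is impossible by evenness.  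So $D$ is primitive, and Theorem~\ref{thm:type12} gives $\irr(A)=3$.  As an alternative check, $\bar E_1\cap\bar E_2$ consists of the two points $\pi(0)$ and $\pi(\beta_1,0)=\pi(0,\beta_2)$, consistent with $\bar E_1\cdot\bar E_2=2$.

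\emph{Conclusion.}  In both cases $\irr(A)\le3$, and the Alzati--Pirola lower bound from \eqref{eq:universal-bounds} gives $\irr(A)=3$.

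\emph{Main obstacle.}  The only delicate point is the primitivity of $D$ in the second case, and this is handled by the evenness of the intersection form \eqref{eq:even-intersection}.  The intersection computation itself is routine via the projection formula.
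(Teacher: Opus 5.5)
Your proof is correct and is essentially the paper's: the same case split on whether a coordinate of $b$ vanishes, and the same class $\overline E_1+\overline E_2$ of square four. The only difference is that the paper gets $\overline E_1\cdot\overline E_2=2$ by counting the two transverse intersection points rather than via $\pi^*$, and your primitivity check is harmless but unnecessary, since $d_1d_2=2$ with $d_1\mid d_2$ already forces type $(1,2)$ (also, with your labels $\pi^*\overline E_1\equiv 2F_2$ rather than $2F_1$, which does not affect the product).
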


\begin{proof}
Write $B=E_1\times E_2$ and $b=(b_1,b_2)$.  If one component of $b$
vanishes, then $B/\langle b\rangle$ is a product.  If both are nonzero,
the images $\overline E_1,\overline E_2\subset A$ of the two factors
meet transversely in the two points
$0_A$ and $\pi(b_1,0)=\pi(0,b_2)$.  Thus
$\overline E_1\cdot\overline E_2=2$, while
$\overline E_1+\overline E_2$ is ample.  Its square is four, so it has
type $(1,2)$.  Apply Proposition~\ref{prop:product} or
Theorem~\ref{thm:type12}, respectively.
\end{proof}

Assume from now on that $B$ is not a product of elliptic curves.  By
Lemma~\ref{lem:BL-1d}, $|M|$ is base-point-free and a general member is
smooth; it is connected because $M$ is ample, and hence is integral.

\begin{proposition}\label{prop:16-good-pair}
Let
\[
 \operatorname{ev}:W\otimes\mathcal O_A\longrightarrow E,
 \qquad
 \mathcal F:=\bigl(\operatorname{im}(\operatorname{ev})\bigr)^{\vee\vee},
 \qquad \mathcal P:=\det\mathcal F.
\]
Then $(\mathcal F,W)$ is a good pair in the sense of Moretti.  Its
determinant system defines a nondegenerate rational map
\[
 \Phi:A\dashrightarrow\PP(W)\simeq\PP^2.
\]
If $\Phi$ is dominant, then $\deg\Phi=3$.
\end{proposition}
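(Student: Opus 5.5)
The plan is to verify the two conditions of a good pair for $(\mathcal F,W)$ by pulling everything back to the étale double cover $B$ of Proposition~\ref{prop:16-cover}. Then I bound the degree by counting zeros of sections of $E$, and finally invoke the Alzati--Pirola lower bound.

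\emph{Step 1: the image of $\operatorname{ev}$ has rank two.} Via $\pi^*E\simeq M\oplus T_b^*M$, the pullback of $\operatorname{ev}$ at $y\in B$ is $s\mapsto(s(y),s(y+b))$ for $s\in W=H^0(B,M)$. If the image had rank at most one generically, the evaluation functionals at $y$ and $y+b$ would be proportional. Then the morphism $\varphi_M:B\to\PP^2$ (base-point-free, since $B$ is not a product, by Lemma~\ref{lem:BL-1d}) would satisfy $\varphi_M(y)=\varphi_M(y+b)$, so it would factor through $\pi$. In that case $M=\pi^*N$ for the line bundle $N=\bar\varphi^*\cO(1)$ on $A$, and $N^2=M^2/2=3$ would be odd. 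This contradicts the evenness \eqref{eq:even-intersection}.

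\emph{Step 2: generation in codimension one.} On a smooth surface the reflexive rank-two sheaf $\mathcal F$ is locally free. The torsion-free sheaf $\operatorname{im}(\operatorname{ev})$ agrees with its double dual outside a finite set, so $W$ generates $\mathcal F$ in codimension one. Since $E$ is locally free, hence reflexive, we also get $\mathcal F\subset E$, and $E/\mathcal F$ is supported on a possible divisor $D$.

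\emph{Step 3: $H^0(A,\mathcal F^\vee)=0$.} Let $\sigma:\mathcal F\to\cO_A$ be nonzero. Then $\sigma\circ\operatorname{ev}$ is a constant functional $\lambda\in W^\vee$, and $\lambda\ne0$ because $\operatorname{im}(\operatorname{ev})$ is generically all of $\mathcal F$. Hence $\ker\lambda\otimes\cO_A$, a trivial rank-two bundle, maps onto $\ker\sigma$ and contains $\ker(\operatorname{ev})$, which is the line bundle $\mathcal P^{-1}$. Pulling back to $B$, this says the two-dimensional space $\ker\lambda$ maps to a rank-one subsheaf of $M\oplus T_b^*M$. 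Equivalently, the $2\times2$ minors $s(y)s'(y+b)-s'(y)s(y+b)$ vanish identically for $s,s'\in\ker\lambda$. That means the pencil $\ker\lambda\subset|M|$ defines a map $B\dashrightarrow\PP^1$ invariant under $T_b$. I expect to derive a contradiction as in Step 1: the moving part of that pencil would descend to $A$ and force $M$, up to a fixed part, to be pulled back from $A$. The fixed part must then be handled using the integrality of general members of $|M|$ and the parity obstruction. This step is the one I expect to be the main obstacle, since the fixed part of a subpencil of $|M|$ has to be controlled. Once Steps 1--3 hold, Moretti's correspondence (Subsection~\ref{subsec:moretti-good-pairs}) gives the nondegenerate determinant map $\Phi$.

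\emph{Step 4: the degree.} Assume $\Phi$ is dominant. By \eqref{fiber_formula}, a general fibre of $\Phi$ off $\operatorname{Bl}(W)$ is the zero set of a general $w\in W$ viewed as a section of $\mathcal F$. Since $\mathcal F_x\to E_x$, a zero of $w$ in $\mathcal F_x$ is a zero in $E_x$, so set-theoretically $Z_{\mathcal F}(w)\subset Z_E(w)=\pi\bigl(C_w\cap(C_w+b)\bigr)$, where $C_w\in|M|$ is the curve of $w$. For general $w$, the two distinct curves $C_w$ and $C_w+b$ (distinct because $b\notin K(M)$ for a general member, as in Step 1) meet in $M^2=6$ points forming three $T_b$-orbits. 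So a general fibre has at most $3=c_2(E)$ points. In characteristic zero the dominant map is generically étale, so $\deg\Phi\le3$. The Alzati--Pirola bound $\irr(A)\ge3$ recalled in the Introduction then gives $\deg\Phi=3$.
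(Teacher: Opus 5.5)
You have a genuine gap at Step~3, which is the real content of the proposition.

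\textbf{What is fine.} Steps 1, 2 and 4 are sound. Step~1 is a tidy alternative route to rank two: a $T_b$-invariant $\varphi_M$ would give $M\simeq\pi^*N$ with $N^2=3$ odd. In Step~4, the set-theoretic count together with generic \'etaleness is an acceptable substitute for the paper's scheme-theoretic inclusion $Z_{\mathcal F}(s)\subseteq Z_E(s)$ and length count. You should add that a general $C_w$ is integral, so that $C_w\cap(C_w+b)$ is finite.

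\textbf{What is missing.} You must show that no two-dimensional subspace of $W$ has identically vanishing determinant, i.e.\ that $\bigwedge^2W\to H^0(A,Q)$ is injective. This yields both $H^0(A,\mathcal F^\vee)=0$ and $\dim V=3$, hence the nondegeneracy of $\Phi$. Your Step~1 only shows that this map is nonzero. You leave Step~3 as an expectation, and the tools you name do not suffice. Parity only disposes of a subpencil with no fixed part, where $M\simeq\pi^*\bar G$. Integrality of general members of the \emph{complete} system $|M|$ says nothing about the fixed part $F_0$ of a subpencil. So the case $\operatorname{div}(s)=F_0+\pi^*\bar G_s$ with $F_0\neq0$ remains open.

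\textbf{How to close it.} What is needed is a numerical argument that uses the standing hypothesis that $B$ is not a product, through the elliptic-curve criterion. In your language, the argument runs as follows.
\begin{enumerate}[(a)]
\item The rational function $s/s'$ is $T_b$-invariant, so the moving part $G$ of the pencil is $T_b$-invariant. It therefore descends: $G=\pi^*\bar G$ with $h^0(A,\bar G)\ge2$. (The paper instead takes the saturated rank-one subsheaf $R\subset E$ and puts $H=\pi^*R$, which maps nontrivially to $M$.)
\item Since $M-G$ and $M+G$ are effective, hence nef, we get $M^2-G^2=(M-G)\cdot(M+G)\ge0$, so $G^2\le6$.
\item If $\bar G^2>0$, then $\bar G$ is ample with $\bar G^2=2h^0(A,\bar G)\ge4$. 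Hence $G^2=2\bar G^2\ge8$, a contradiction.
\item So $\bar G^2=0$, and Lemma~\ref{lem:standard}(iii) gives $G\equiv nF$ with $F$ elliptic and $n=h^0(B,G)\ge2$.
\item Then $0\le(M-nF)^2=6-2n\,(M\cdot F)$ forces $M\cdot F=1$.
\item Lemma~\ref{lem:standard}(iv) then makes $B$ a product, contrary to assumption.
\end{enumerate}
Without this step, or an equivalent one, neither the good-pair condition nor the nondegeneracy of $\Phi$ is established.
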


\begin{proof}
For $s\in W=H^0(B,M)$ put $D_s=Z_B(s)$.  Under the splitting of
Proposition~\ref{prop:16-cover},
\begin{equation}\label{eq:16-zero-pullback}
 \pi^{-1}\bigl(Z_E(s)\bigr)=D_s\cap T_b(D_s)
\end{equation}
scheme-theoretically.  For general $s$, the curve $D_s$ is integral.  It
cannot equal $T_b(D_s)$, since that would give $T_b^*M\simeq M$, whereas
$b$ has order two and $K(M)\simeq(\Z/3\Z)^2$.  Hence the
intersection in \eqref{eq:16-zero-pullback} is proper of length $M^2=6$,
and therefore
\begin{equation}\label{eq:16-zero-length}
 \operatorname{length}Z_E(s)=3
\end{equation}
for general $s$.

We next prove that the determinant homomorphism
\[
 \delta:\bigwedge^2W\longrightarrow H^0(A,Q)
\]
is injective.  Otherwise, since $\dim W=3$, two independent sections of
$E$ would span a saturated rank-one subsheaf $R\subset E$.  Because
$E/R$ is torsion-free on the smooth surface $A$, the sheaf $R$ is
reflexive and hence a line bundle; moreover $h^0(A,R)\ge2$.  Put
$H:=\pi^*R$.  Projecting
$H\to\pi^*E=M\oplus T_b^*M$ to a nonzero summand, and translating by
$b$ if necessary, gives a nonzero map $H\to M$; here
$T_b^*H\simeq H$.  Thus both $H$ and $M\otimes H^{-1}$ are effective,
hence nef.

If $R^2>0$, then $R$ is ample and
$R^2=2h^0(A,R)\ge4$, so $H^2=2R^2\ge8$.  This contradicts
\[
 0\le (M-H)\cdot(M+H)=M^2-H^2.
\]
Consequently $R^2=H^2=0$.  Lemma~\ref{lem:standard}(iii) gives
$H\equiv nF$ for an elliptic curve $F\subset B$, where
$n=h^0(B,H)\ge2$.  Since $M-H$ is nef,
\[
 0\le(M-nF)^2=6-2n(M\cdot F).
\]
It follows that $M\cdot F=1$, and Lemma~\ref{lem:standard}(iv) would
make $B$ a product, a contradiction.  Thus $\delta$ is injective.

It follows that $\mathcal F$ has rank two.  The inclusion
$\operatorname{im}(\operatorname{ev})\hookrightarrow E$ induces
$\mathcal F\hookrightarrow E$ after taking reflexive hulls.  Since $A$
is a smooth surface, $\mathcal F$ is locally free; moreover, $W$
generates it in codimension one.  Taking determinants gives
\[
 \mathcal P\simeq Q(-\Delta)
\]
for an effective divisor $\Delta$, the fixed divisor of the determinant
system viewed in $|Q|$; after factoring it out, the induced system in
$|\mathcal P|$ has no divisorial base component.  If
$0\ne\sigma\in H^0(A,\mathcal F^\vee)$, composition with evaluation
gives a nonzero functional $\lambda:W\to\mathbf C$, because $W$
generates $\mathcal F$ generically.  Two
independent members of $\ker\lambda$ then have identically vanishing
determinant, contrary to the injectivity of $\delta$.  Hence
$H^0(A,\mathcal F^\vee)=0$, and $(\mathcal F,W)$ is a good pair.

Let
\[
 V:=\operatorname{im}\!\left(\bigwedge^2W
       \longrightarrow H^0(A,\mathcal P)\right).
\]
The injectivity just proved gives $\dim V=3$.  Moretti's
good-pairs correspondence and fiber formula
\cite[Proposition~1.5]{Moretti} produce the nondegenerate map
$\Phi=\varphi_V:A\dashrightarrow\PP(V^\vee)\simeq\PP(W)$.
The inclusion $\mathcal F\hookrightarrow E$ gives
$Z_{\mathcal F}(s)\subseteq Z_E(s)$ scheme-theoretically.  If $\Phi$ is
dominant, the same proposition and \eqref{eq:16-zero-length} therefore
give $\deg\Phi\le3$.  The Alzati--Pirola lower bound gives
$\deg\Phi\ge3$, and hence $\deg\Phi=3$.
\end{proof}

\subsection{The case of a pencil}

\begin{proposition}\label{prop:16-exceptional-image}
If the image of $\Phi$ is a curve, then $A$ is a product of elliptic
curves or admits a polarization of type $(1,2)$.  Consequently
$\irr(A)=3$.
\end{proposition}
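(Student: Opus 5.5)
If the image of $\Phi$ is a curve $\Gamma\subset\PP(V^\vee)$, I will turn the pencil structure into a splitting of the sections of $M$ on $B$, and hence into a small class on $A$. The plan follows the template of Lemma~\ref{lem:13-limit-dominant}. I resolve $\Phi$, take the Stein factorization $A\dashrightarrow T\to\widetilde\Gamma$, and write the moving part of the determinant system as $e\delta$ fibres of a general class $D$. Here $e\ge2$ is the degree of the nondegenerate curve $\Gamma$. Thus $[\mathcal P]=e\delta[D]$ in $\NS(A)$, and $[\mathcal P]=[L]-[\Delta]$, where $\Delta$ is the fixed divisor from Proposition~\ref{prop:16-good-pair}.

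\emph{Case $T\simeq\PP^1$.} The fibres $D_y$ are linearly equivalent, so $N=\cO_A(D_y)$ has $h^0(N)\ge2$. By Lemma~\ref{lem:standard}, either $N$ is ample with $N^2=2h^0(N)\geq4$, or $N\equiv nF$ for an elliptic curve $F$. I then compare with $L$. Since $e\delta N\le \mathcal P\le Q$ and $L-e\delta N$ is effective, nefness gives $L\cdot N\ge e\delta N^2\ge 2N^2$, and $(L-2N)^2\ge0$. In the ample case this forces $N^2\le 4$, while $L\cdot N\le \tfrac12(L^2/2+\ldots)$ from Hodge index. I expect a numerical squeeze to give $N^2=4$ and $N$ primitive, hence a type-$(1,2)$ polarization by Lemma~\ref{lem:primitive-type}. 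If $N$ is not primitive it would be $2$ times a principal class, which is even better. In the elliptic case $N\equiv nF$, I get $(L-2nF)^2\ge0$, i.e.\ $12-4n(L\cdot F)\ge0$, so $L\cdot F\le1$ (using $n\geq2$). Then Lemma~\ref{lem:standard}(iv) makes $A$ a product.

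\emph{Case $T$ elliptic.} The map $A\to T$ is a morphism with disjoint fibres, so $D\equiv mF$ for an elliptic fibre class $F$. Then $[L]=e\delta m[F]+[\Delta]$ with $e\delta m\ge2$. Since $\Delta$ is effective, $0\le\Delta^2=(L-e\delta mF)^2=12-2e\delta m(L\cdot F)$, hence $L\cdot F\le3$. I will sharpen this by pulling back to $B$. Here $\pi^*F$ splits into one or two elliptic curves $F'$, and the pencil structure gives $W$ as sections vanishing along fibres. Each section $s\in W$ then satisfies: $D_s\cap T_b D_s$ moves in the fibration. I will try to show that $M$ restricted to the fibres of the induced elliptic fibration on $B$ has degree $1$, i.e.\ $M\cdot F'=1$. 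Lemma~\ref{lem:standard}(iv) would then make $B$ a product, contradicting our standing assumption. Alternatively, reaching $L\cdot F=1$ directly makes $A$ a product.

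The final conclusion $\irr(A)=3$ then follows from Proposition~\ref{prop:product} or Theorem~\ref{thm:type12}.

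The main obstacle is controlling the fixed divisor $\Delta$ and the intermediate values of $L\cdot F$ and $N^2$: crude Hodge-index bounds leave cases such as $L\cdot F=2,3$ open. I will first try to exploit the $b$-structure on $B$. Since $\pi^*\mathcal F\subset M\oplus T_b^*M$, the kernel-map fibres lift to curves in $|M|$ on $B$, which should force a degree-one intersection with $M$.
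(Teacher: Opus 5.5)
Your case $T\simeq\PP^1$ is sound in substance, but you misread its outcome. From $L\cdot N\ge 2N^2$ and $(L-2N)^2\ge0$ you get $12-4N^2\ge0$, i.e.\ $N^2\le3$. This contradicts $N^2=2h^0(N)\ge4$, so the ample subcase is simply impossible rather than producing a $(1,2)$ class.

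The genuine gap is the case $T$ elliptic. Using only $e\ge2$, you are left with $L\cdot F\in\{2,3\}$. Your proposed remedy, forcing $M\cdot F'=1$ on $B$, has no mechanism behind it, and the numerics do not give it. Suppose $\pi^{-1}(F)$ is a single elliptic curve $F'$, i.e.\ $b$ lies on the elliptic subgroup of $B$ over $F$. Then $2M\cdot F'=\pi^*L\cdot\pi^*F=2(L\cdot F)=4$, so $M\cdot F'=2$, and there is no conflict with $B$ being non-split. More to the point, the case $L\cdot F=2$ should not be excluded at all: it is exactly where the $(1,2)$ alternative in the statement comes from.

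Two steps close the gap.

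First, $k:=e\delta m\ge3$. If $e=2$, the nondegenerate conic $\Gamma$ is smooth, so $\widetilde\Gamma\simeq\PP^1$, and the elliptic curve $T$ can only cover it with $\delta\ge2$. Hence $0\le\Delta^2=12-2k(L\cdot F)$ gives $L\cdot F\le2$. The value $L\cdot F=1$ yields a product by Lemma~\ref{lem:standard}(iv).

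Second, if $L\cdot F=2$, then $k=3$ and $\Delta^2=0$. So $\Delta\cdot F=(L-3F)\cdot F=2$, and $\Delta+F$ is nef with $(\Delta+F)^2=4$, hence ample of type $(1,2)$.

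This is precisely the paper's endgame. The paper reaches $\deg\Gamma\ge3$ differently. It shows that $\Phi$ is equivariant for $K=\pi(K(M))\simeq(\Z/3\Z)^2$, which acts faithfully on $\Gamma$ but cannot act faithfully on a rational curve. So the normalization of $\Gamma$ is elliptic and $A\to\widetilde\Gamma$ is a morphism. Your split according to the genus of $T$, completed as above, would bypass that theta-group equivariance argument entirely.
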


\begin{proof}
Let $\Gamma\subset\PP(W)$ be the integral image curve.  It is
nondegenerate by Proposition~\ref{prop:16-good-pair}.  Put
$K:=\pi(K(M))\subset A$.  The kernels of $\pi$ and $\phi_M$ have
coprime orders, so restriction of $\pi$ identifies
\[
                 K(M)\simeq K\simeq(\Z/3\Z)^2.
\]
For $x\in K(M)$ put $y=\pi(x)$ and choose a theta-group lift
$T_x^*M\simeq M$.  The Cartesian translation square
$\pi\circ T_x=T_y\circ\pi$ gives
\[
                     T_y^*E\simeq\pi_*T_x^*M\simeq E.
\]
These isomorphisms are compatible with the theta-group action on $W$
and with evaluation.  They therefore preserve $\mathcal F$, its
determinant system, and its fixed divisor.  Thus $\Phi$ is
$K$-equivariant.  This base-change argument is the one underlying
\cite[proof of Theorem~2.2]{GrossiMoretti}, but it does not require the
auxiliary hypotheses of that theorem.
In particular, $\Gamma$ is $K$-invariant and $K$ acts faithfully on $\Gamma$ thanks to the non-degeneracy.

Let $\nu:C\to\Gamma$ be the normalization.  The $K$-action lifts
uniquely to $C$, and the lifted action is also faithful.
The curve $C$ cannot be rational.  Otherwise $K$ would embed in
$\operatorname{PGL}_2(\C)$, which is impossible. 
Hence, the rational map $\gamma: A\dashrightarrow C$ is actually a morphism to an elliptic curve $C$ and thus $\vert W^\vee\vert-\Delta$ is a base-point-free linear subsystem of $\vert\mathcal{P}\vert = \vert Q(-\Delta )\vert$.
Write its Stein factorization as
\[
  A\xrightarrow{p}C_0\xrightarrow{\eta}C,
\]
where $p$ has connected elliptic fibers and $\eta$ is an isogeny.  Put
\[
  N:=\eta^*\nu^*\cO_\Gamma(1),
  \qquad n:=\deg N.
\]
We then have $n=(\deg\eta)\deg\Gamma$.  Since an integral nondegenerate
plane curve of degree two is a smooth conic, whereas $C$ is elliptic, we
obtain $\deg\Gamma\ge3$ and hence $n\ge3$.

Let $F$ be a fiber of $p$, and put $m:=Q\cdot F$.  Since $\mathcal{P} = p^*N$, we have
\[
  \mathcal P\equiv nF,
  \qquad
  \Delta\equiv Q-nF.
\]
The effective divisor $\Delta$ is nef.  Using $Q^2=12$ and $F^2=0$, we
obtain
\[
  0\le\Delta^2=12-2nm.
\]
Since $n\ge3$ and $m$ is a positive integer, we must have $m=1$ or
$m=2$.

If $m=1$, Lemma~\ref{lem:standard}(iv), applied to the polarization $Q$,
shows that $A$ is a product of elliptic curves.  Suppose instead that
$m=2$.  The preceding inequality forces $n=3$ and $\Delta^2=0$, and we
have
\[
  \Delta\cdot F=(Q-3F)\cdot F=2.
\]
The divisor $\Delta+F$ is nef and satisfies
\[
  (\Delta+F)^2=\Delta^2+2\Delta\cdot F=4.
\]
It is therefore ample and has polarization type $(1,2)$.  In the first
case Proposition~\ref{prop:product} applies, and in the second case
Theorem~\ref{thm:type12} applies.  In either case, we conclude that
$\operatorname{irr}(A)=3$.
\end{proof}

\begin{theorem}\label{thm:type16}
If a complex abelian surface $A$ admits a polarization of type $(1,6)$,
then $\irr(A)=3$.
\end{theorem}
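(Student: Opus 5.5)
The plan is to assemble the results of this section into a case analysis. Fix a polarization $L$ of type $(1,6)$ on $A$ and take the degree-two étale cover $\pi:B\to A$, the type-$(1,3)$ bundle $M$ on $B$, and the rank-two bundle $E=\pi_*M$ from Proposition~\ref{prop:16-cover}.

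If $B$ is a product of elliptic curves, Lemma~\ref{lem:16-product-cover} already gives $\irr(A)=3$. We may therefore assume that $B$ is not a product. Then Proposition~\ref{prop:16-good-pair} applies, and the good pair $(\mathcal F,W)$ yields a nondegenerate rational map $\Phi:A\dashrightarrow\PP(W)\simeq\PP^2$.

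Since $\dim W=3$ and $\Phi$ is nondegenerate, its image is not a point. Two cases remain.

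\textbf{The image of $\Phi$ is a curve.} Proposition~\ref{prop:16-exceptional-image} shows that $A$ is either a product of elliptic curves or carries a polarization of type $(1,2)$. In either case $\irr(A)=3$.

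\textbf{The image of $\Phi$ is all of $\PP^2$.} Then $\Phi$ is dominant, and Proposition~\ref{prop:16-good-pair} gives $\deg\Phi=3$, so $\irr(A)\le 3$. The reverse inequality is the Alzati--Pirola bound, or equivalently the lower bound in \eqref{eq:universal-bounds}. Hence $\irr(A)=3$.

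The step needing the most care is checking that this dichotomy is exhaustive and that the earlier propositions apply unconditionally. Concretely, one must confirm three things:
\begin{enumerate}[(i)]
\item the fixed divisor $\Delta$ has been accounted for, so that Moretti's degree count applies to the moving part of the determinant system;
\item the zero-length estimate \eqref{eq:16-zero-length} holds for a general section, which uses that $B$ is not a product so that a general $D_s$ is integral;
\item in the curve-image case, the $K$-equivariance argument does not depend on the auxiliary hypotheses of Grossi--Moretti.
\end{enumerate}
Since each of these is already established in the preceding propositions, the theorem itself should be a short bookkeeping argument.
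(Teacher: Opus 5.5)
Your proposal is correct and follows the paper's proof essentially verbatim. You dispose of the product case by Lemma~\ref{lem:16-product-cover}, then use the nondegenerate map $\Phi$ of Proposition~\ref{prop:16-good-pair}. Its dominant case gives degree three, and its curve-image case is handled by Proposition~\ref{prop:16-exceptional-image}. The Alzati--Pirola lower bound closes the argument.
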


\begin{proof}
Choose $(B,M,b,\pi)$ as in Proposition~\ref{prop:16-cover}.  If $B$ is
a product, apply Lemma~\ref{lem:16-product-cover}.  Otherwise apply
Proposition~\ref{prop:16-good-pair}.  Its map is nondegenerate, so its
image is either $\PP^2$ or a nondegenerate curve.  In the first
case it has degree three; in the second,
Proposition~\ref{prop:16-exceptional-image} reduces to an already proved
case.  The Alzati--Pirola lower bound completes the proof.
\end{proof}

\begin{proof}[Proof of Theorem~\ref{thm:main}]
The necessary implication is Theorem~\ref{thm:necessary}.  Conversely,
the four sufficient cases are Theorems~\ref{thm:type11},
\ref{thm:type12}, \ref{thm:type13}, and \ref{thm:type16}, respectively.
Each supplies a dominant rational map of degree three to $\PP^2$.
\end{proof}

\begin{proof}[Proof of Corollary~\ref{cor:irr-classification}]
In any of the four polarization cases, Theorem~\ref{thm:main} and the
lower bound in \eqref{eq:universal-bounds} give $\irr(A)=3$.  If none of
the four polarizations exists, Theorem~\ref{thm:main} rules out a cubic
map, while \eqref{eq:universal-bounds} leaves only $\irr(A)=4$.
\end{proof}

\section{Appendix: some lattice results}
In this appendix, we establish several lattice-theoretic results used in the proof of Theorem \ref{thm:necessary}. The arguments are largely based on linear algebra.

A symmetric integral matrix $K$ is called \emph{even} if for any integral vector $v$, $v^tKv$ is even. An integer $n$ is called \emph{primitively represented} by $K$ if there is a primitive integral vector $v$ such that $n = v^tKv$.
\begin{lemma}\label{lem:binary-tensor}
Let $K$ be an even integral symmetric matrix of signature $(1,1)$ and let
$C\in\Sym_2(\Z)$.  Suppose that
\begin{equation}\label{hypothesis-binary}
     \tr(CK)=12,\qquad
 \det(K+KCK)\leq0,\qquad
 3K-KCK\preceq0.
\end{equation}
Then $K$ primitively represents one of $2,4,6,12$.
\end{lemma}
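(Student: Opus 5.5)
The plan is to remove the unknown matrix $C$ in favour of a positive semidefinite integral form, extract a bound on the discriminant of $K$, and finish with a finite reduction-theory search. Both $K$ and $C$ change by congruence under $\operatorname{GL}_2(\Z)$ (with $C\mapsto g^{-1}Cg^{-t}$ when $K\mapsto g^tKg$). The three hypotheses in \eqref{hypothesis-binary} are invariant under this change, and so is the set of primitively represented integers. I may therefore first put $K$ into a reduced form and only afterwards analyze $C$.

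\textbf{Step 1: change of variables.} Write $K=\begin{pmatrix}2a&b\\ b&2c\end{pmatrix}$ with discriminant $\Delta:=-\det K=b^2-4ac>0$. Set
\[
P:=KCK-3K .
\]
The hypotheses become:
\[
P\succeq0,\qquad \tr(K^{-1}P)=\tr(CK)-6=6 .
\]
For the determinant condition, expand $\det(4K+P)$ in dimension two:
\[
\det(4K+P)=16\det K+4\det K\,\tr(K^{-1}P)+\det P=-40\Delta+\det P .
\]
So the condition becomes $\det P\le 40\Delta$. Write $\operatorname{adj}K=JKJ^t$ with $J$ the rotation by $90^\circ$, so $\operatorname{adj}K$ is integrally equivalent to $K$. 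The trace identity then reads $\tr(\operatorname{adj}(K)P)=-6\Delta$.

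\textbf{Step 2: split by the rank of $P$.} If $P=m\,ww^t$ has rank one, with $w$ primitive, then $v:=J^tw$ is primitive and $m\,v^tKv=-6\Delta$. The case $P=0$ is excluded by the trace. If $P$ has rank two, diagonalize $P$ over $\R$ against the two $K$-isotropic directions. Since $\operatorname{adj}K$ is indefinite and $P$ is positive definite, $\tr(\operatorname{adj}(K)P)=-6\Delta$ together with $\det P\le40\Delta$ should force $\Delta$ to be bounded, using AM--GM on the two positive eigenvalues paired against the isotropic directions. I also intend to use integrality. The identity $P+3K=KCK$ says that $P\equiv-3K$ modulo the image $K\,\Sym_2(\Z)\,K$. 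Taking this modulo the content of $K$, and modulo $\Delta$ via $C=K^{-1}(P+3K)K^{-1}$ being integral, gives divisibility constraints. For instance, $\operatorname{adj}(K)(P+3K)\operatorname{adj}(K)\equiv0\pmod{\Delta^2}$ entrywise.

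\textbf{Step 3: finite search.} With $\Delta$ bounded (I expect something like $\Delta\le 48$ or smaller), run through the finitely many reduced even indefinite binary forms with $|b|\le 2|a|\le$ a bound depending on $\Delta$. For each, either exhibit a primitive vector of value $2,4,6,12$ directly (first diagonal entries, then $2a+2b+2c$, $2a-2b+2c$, and so on), or show that no integral $C$ satisfies the three conditions. In the latter situation the candidate $C$ are themselves finite in number, because $P$ has bounded trace against a fixed form and bounded determinant.

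A heuristic supporting the conclusion is that $12=\tr(CK)$. Writing $C$ rationally as $\sum\epsilon_i x_ix_i^t$ gives $12=\sum\epsilon_i x_i^tKx_i$, and the semidefiniteness constraint $KCK\succeq3K$ should force one term to carry a positive value of $K$ dividing $12$. If a clean integral decomposition $C=xx^t\pm yy^t$ is available after reduction, this may shortcut Step 3.

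\textbf{Main obstacle.} I expect the hardest part to be Step 2: turning $\det P\le40\Delta$ and the trace identity into an explicit bound on $\Delta$ in the rank-two case, and dealing with the rank-one case, where the identity only yields a negative represented value $-6\Delta/m$. There I would first try to combine the relation $m\,v^tKv=-6\Delta$ with the congruence $P\equiv-3K$ to show that $m$ is a multiple of $\Delta$. That would give $v^tKv\in\{-1,-2,-3,-6\}$, and evenness cuts this to $v^tKv\in\{-2,-6\}$. I would then try to use a second vector completing $v$ to a basis to produce a positive primitive value in $\{2,4,6,12\}$.
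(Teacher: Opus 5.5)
\textbf{There is a genuine gap in Step 2, and the rest of your plan depends on it.}

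You claim that when $P$ has rank two, the identity $\tr(\operatorname{adj}(K)P)=-6\Delta$ together with $\det P\le 40\Delta$ bounds $\Delta$. This is false. Take $K=\operatorname{diag}(2,-2N)$ and $C=\operatorname{diag}(6,0)$. Then $\tr(CK)=12$, $\det(K+KCK)=-52N\le0$, and $3K-KCK=\operatorname{diag}(-18,-6N)\preceq0$. Here $P=\operatorname{diag}(18,6N)$ is positive definite while $\Delta=4N$ is arbitrary.

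No real-variable inequality, AM--GM or otherwise, can produce such a bound. All three hypotheses are invariant under $K\mapsto\lambda K$, $C\mapsto\lambda^{-1}C$ for real $\lambda>0$, whereas $\Delta\mapsto\lambda^2\Delta$. The only real invariants are the eigenvalues of $K^{-1}P$, whose sum is $6$ and whose product is $-\det P/\Delta$. So the finite search of Step 3 cannot cover the infinitely many pairs with $\rk C=1$.

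You also have the difficulty backwards. The rank-one case of $P$, which you flag as the main obstacle, is harmless: $\det P=0$ forces $\Delta\det C=-27$, hence $\Delta\in\{1,9\}$ for an even form.

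\textbf{The missing idea is the integrality relation $\Delta\mid\det(CK)=-\Delta\det C$.} In your variables, expanding $\det(P+3K)=\det(KCK)$ gives $\det P=27\Delta+\Delta^2\det C$. Combined with $0\le\det P\le40\Delta$, this yields $\Delta\,|\det C|\le27$. So the right dichotomy is on $\det C$, not on $\rk P$; the paper splits equivalently at $\Delta\ge28$ versus $\Delta\le27$.

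\emph{Case $\det C=0$.} Write $C=n\,vv^t$ with $v$ primitive, so $n\,v^tKv=12$. The case $n<0$ is impossible. Indeed, a nonzero $w$ with $v^tKw=0$ would then satisfy $w^tKw>0$, and so $w^t(3K-KCK)w=3w^tKw>0$, contradicting semidefiniteness. Hence $v^tKv$ is a positive even divisor of $12$, and no bound on $\Delta$ is needed.

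\emph{Case $\det C\neq0$.} Then $\Delta\le27$, and this is where a finite analysis belongs. The paper shows that the least positive value $m$ of $f=K/2$ satisfies $m^2\le\Delta$, so $m\le5$. It then uses that the content of $f$ divides $6$, which follows from $ac_{11}+bc_{12}+cc_{22}=6$, to eliminate $m=4,5$. Your enumeration could in principle replace this last step, but as written it is only a plan. It must also use this content condition: for instance $\begin{pmatrix}0&4\\4&0\end{pmatrix}$ has $\Delta=16$ and represents only multiples of $8$.
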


\begin{proof}
Put
\[
                   \Delta:=-\det K>0,
          \qquad \varepsilon:=\det(CK)=-\Delta\det C.
\]
For a $2\times2$ matrix, the trace and determinant determine the
characteristic polynomial.  Since $\tr(CK)=12$, we obtain
\begin{align}
 \det(K+KCK)&=-\Delta(13+\varepsilon),\label{eq:binary-det-plus}\\
 \det(3K-KCK)&=\Delta(27-\varepsilon).\label{eq:binary-det-minus}
\end{align}
The hypothesis in \eqref{hypothesis-binary}
implies
\begin{equation}\label{eq:epsilon-range}
                         -13\leq\varepsilon\leq27.
\end{equation}

Suppose first that $\Delta\geq28$.  Since $\varepsilon$ is a multiple
of $\Delta$, \eqref{eq:epsilon-range} gives $\varepsilon=0$.  The trace
condition excludes $C=0$, so $C$ has rank one.  Every nonzero integral
symmetric rank-one matrix can be written
\[
                              C=nvv^t
\]
with $v\in\Z^2$ primitive and $n\in\Z\setminus\{0\}$.  Hence
\[
                              n(v^tKv)=12.
\]
If $n<0$, then $v^tKv<0$.  For a nonzero real vector $w$ such that $v^tKw = 0$,
\[
 w^t(3K-KCK)w=3w^tKw>0,
\]
contrary to negative semidefiniteness.  Thus $n>0$, and the even
positive integer $v^tKv$ is a divisor of $12$.  This proves the result
in this case.

Assume now that $\Delta\leq27$.  Write
\[
 K=\begin{pmatrix}2a&b\\ b&2c\end{pmatrix},
 \qquad f(X,Y)=aX^2+bXY+cY^2.
\]
Writing $C=(c_{ij})$, the trace identity becomes
\[
                         ac_{11}+bc_{12}+cc_{22}=6.
\]
Thus the content $\gcd(a,b,c)$ of $f$ divides $6$.

Let $m$ be the least positive integer represented by $f$.  A vector
representing $m$ is primitive.  Complete it to an integral basis and
reduce the second basis vector modulo the first.  In the resulting
coordinates,
\[
                 f(X,Y)=mX^2+eXY+\kappa Y^2,
                 \qquad -m<e\leq m.
\]
If $\kappa>0$, then minimality gives $\kappa\geq m$, and the
discriminant would satisfy
\[
                  \Delta=e^2-4m\kappa\leq m^2-4m^2<0,
\]
a contradiction.  Thus $\kappa=-\nu$ with $\nu\geq0$, so
\begin{equation}\label{eq:binary-reduced}
                 f(X,Y)=mX^2+eXY-\nu Y^2.
\end{equation}

We claim that $m^2\leq\Delta=e^2+4m\nu$.  If $\nu=0$ and $|e|<m$,
choose $\eta\in\{\pm1\}$ with $e\eta=-|e|$; then
$0<f(1,\eta)=m-|e|<m$, contradicting minimality.  Hence in this case
$|e|=m$.  If $\nu>0$ and $\Delta<m^2$, then
\[
 4m\nu<(m-|e|)(m+|e|)\leq2m(m-|e|),
\]
so $\nu<(m-|e|)/2$.  With the same choice of $\eta$,
\[
                    0<f(1,\eta)=m-|e|-\nu<m,
\]
again a contradiction.  Therefore $m^2\leq\Delta\leq27$, and $m\leq5$.

If $m=1,2,$ or $3$, we are done.  If $m=4$ and $\nu=0$, then
$|e|=4$, so the form in \eqref{eq:binary-reduced} has content $4$,
contrary to the content condition.  If $m=4$ and $\nu>0$, then
$e^2+16\nu\leq27$, so $\nu=1$ and $|e|\leq3$.  If $|e|\leq2$, one
of $f(1,\pm1)$ equals $1$, $2$, or $3$; if $|e|=3$, the other sign
gives the value $6$.  If $m=5$ and $\nu=0$, the form has content $5$,
again impossible.  If $m=5$ and $\nu>0$, then $\nu=1$ and
$|e|\leq2$; one of $f(1,\pm1)=4\pm e$ is a positive value below $5$,
contradicting minimality.

Thus $f$ represents $1$, $2$, $3$, or $6$.  A vector representing any
of these values is primitive, since none is divisible by a nontrivial
square.  Therefore $K$ primitively represents $2$, $4$, $6$, or $12$.
\end{proof}

\begin{lemma}\label{lem:spectral}
Let $Q$ be a symmetric integral matrix with signature $(1,2)$ and let $B$ be a symmetric integral matrix with
$\indp(B)=1$.  Assume
\[
                              B-3Q^{-1}\succeq0.
\]
Then every eigenvalue of $BQ$ is real and nonnegative, at most one
eigenvalue is greater than $3$, and the eigenvalue zero is semisimple.
\end{lemma}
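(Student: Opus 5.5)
The strategy is to write $P:=B-3Q^{-1}\succeq0$, so that
\[
 BQ=PQ+3I .
\]
The eigenvalues of $BQ$ are then $3+\lambda$ with $\lambda$ ranging over the eigenvalues of $PQ$. So I would study the spectrum of $PQ$, where $P$ is positive semidefinite and $Q$ is symmetric of signature $(1,2)$, and separately use $\indp(B)=1$ for the lower bound.

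\emph{Step 1: reality, and at most one eigenvalue above $3$.} Let $P^{1/2}$ be the positive semidefinite square root. Then $PQ=P^{1/2}\cdot(P^{1/2}Q)$, and $PQ$ has the same nonzero eigenvalues, with the same Jordan structure on the nonzero part, as the real symmetric matrix $S:=P^{1/2}QP^{1/2}$. Hence every nonzero eigenvalue of $PQ$ is real. By Sylvester's law of inertia applied to $Q$ restricted to the image of $P^{1/2}$, we get $\indp(S)\le\indp(Q)=1$. So $PQ$ has at most one positive eigenvalue, and hence $BQ$ has at most one eigenvalue greater than $3$. For the eigenvalue $0$ of $PQ$ (eigenvalue $3$ of $BQ$) nothing beyond reality is needed.

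\emph{Step 2: no negative eigenvalues.} Eigenvalues $\mu$ of $BQ$ correspond, via $w=Qv$, to solutions of the pencil equation $Bw=\mu Q^{-1}w$. I would track the inertia of $B-\mu Q^{-1}$ for $\mu\le0$. At $\mu=0$ this is $B$, with positive index $1$. Since $B-\mu Q^{-1}\succeq(3-\mu)Q^{-1}$, the positive index is at least $1$ for every $\mu\le 3$. For a kernel vector $w$ at some $\mu<0$, the inequality $w^tBw\ge 3w^tQ^{-1}w$ gives $w^tQ^{-1}w\le0$, hence $w^tBw\ge0$. I would then exploit that $w$ is $B$-orthogonal and $Q^{-1}$-orthogonal to all other generalized eigenvectors of the pencil. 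The aim is to produce a $2$-dimensional subspace on which $B$ is positive semidefinite but not identically degenerate, which contradicts $\indp(B)=1$. Equivalently, one shows that as $\mu$ decreases from $0$ to $-\infty$ the inertia of $B-\mu Q^{-1}$ never changes: it starts at that of $B$ and ends at that of $Q^{-1}$, and any crossing would force a second positive direction. Reality of all eigenvalues, including those coming from $\ker P$, is then completed by the same pencil argument, using that Step 1 has already covered the nonzero eigenvalues of $PQ$.

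\emph{Step 3: semisimplicity of $0$.} If $BQv=0$, then $Qv\in\ker B$. Suppose there were a Jordan chain $BQu=v$. Then
\[
 v^tQv=u^tQBQv=0,
\]
so $Qv$ would be a null vector of $Q^{-1}$ lying in $\ker B$. Using $P=B-3Q^{-1}\succeq0$, we get $(Qv)^tP(Qv)=-3\,v^tQv=0$, so $Qv\in\ker P$ and hence $3Q^{-1}Qv=Bv'$-type relations force $Qv\in\ker Q^{-1}=0$, a contradiction.

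The main obstacle is Step 2: nonnegativity must genuinely use $\indp(B)=1$. The plan is the inertia-continuity argument for the pencil $B-\mu Q^{-1}$ on $\mu\le0$, and I expect the delicate point to be handling possible degenerate, isotropic kernel vectors of the pencil.
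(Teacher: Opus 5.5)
\textbf{Verdict.} Your Steps 1 and 3 are sound, but Step 2 has a genuine gap, and it is the step that matters most.

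\textbf{Steps 1 and 3.} Step 1 already proves that \emph{all} eigenvalues are real, since the zero eigenvalue of $PQ$ is trivially real; no further pencil argument is needed for reality. Via $\indp(P^{1/2}QP^{1/2})\le\indp(Q)=1$, Step 1 also shows that at most one eigenvalue exceeds $3$. Step 3 is a correct direct proof of semisimplicity, once its last sentence is read as: $PQv=0$ gives $BQv=3v$, while $BQv=0$, so $v=0$.

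\textbf{The gap in Step 2.} Step 2 is the only place where $\indp(B)=1$ enters, and you leave it as a plan. As stated, the plan aims at the wrong contradiction. A plane on which $B$ is positive semidefinite and nonzero is perfectly compatible with $\indp(B)=1$; take $B$ of rank one. What you need is a plane on which $B$ is positive \emph{definite}. The ``inertia never changes'' formulation is also inaccurate. $B$ may be singular (in Proposition~\ref{prop:rank-three-arithmetic} its rank $s$ can be $1$ or $2$), so the inertia of $B-\mu Q^{-1}$ at $\mu=0$ generally differs from that of $Q^{-1}$. What is actually required is only that $B-\mu Q^{-1}$ is nonsingular for every $\mu<0$, and your ``any crossing would force a second positive direction'' is exactly the unproved claim.

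\textbf{How to close it.} The inequalities you already have suffice. Take a real $w\neq0$ (available by Step 1) with $Bw=\mu Q^{-1}w$ and $\mu<0$, so that $w^tQ^{-1}w\le0$.
\begin{itemize}
\item If $w^tQ^{-1}w=0$, then $w^tPw=w^tBw-3w^tQ^{-1}w=0$, hence $Pw=0$. So $Bw=3Q^{-1}w$, giving $(3-\mu)Q^{-1}w=0$ and forcing $w=0$.
\item If $w^tQ^{-1}w<0$, the $Q^{-1}$-orthogonal complement of $w$ has signature $(1,1)$, so it contains some $h$ with $h^tQ^{-1}h>0$. Then $w^tBh=\mu\,w^tQ^{-1}h=0$, $w^tBw=\mu\,w^tQ^{-1}w>0$, and $h^tBh\ge 3h^tQ^{-1}h>0$. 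Thus $B$ is positive definite on $\langle w,h\rangle$, contradicting $\indp(B)=1$.
\end{itemize}

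\textbf{Comparison with the paper.} The paper avoids this eigenvector case analysis. It writes $B-3Q^{-1}=XX^t$ and sets $C=X^tQX$, which plays the role of your $S$. It then computes the inertia of $\begin{pmatrix}3Q^{-1}&X\\ X^t&-I\end{pmatrix}$ through both Schur complements, obtaining $1=\indp(B)=1+n_-(I+C/3)$, that is, $C\succeq-3I$. From this single inequality for a symmetric matrix it reads off nonnegativity and the bound on eigenvalues above $3$. Semisimplicity of $0$ follows too, because $0$ for $BQ$ corresponds to the nonzero eigenvalue $-3$ of the symmetric $C$. You could equally get semisimplicity from your own $S$ in the same way.
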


\begin{proof}
Write
\[
                         B-3Q^{-1}=XX^t,
\]
where $X$ has full column rank, possibly zero, and put
$C=X^tQX$.  Consider
\[
 \mathcal H=
 \begin{pmatrix}
   3Q^{-1}&X\\
   X^t&-I
 \end{pmatrix}.
\]
Note that the matrix \(\mathcal H\) is congruent to
\[
\begin{pmatrix}
B & 0\\
0 & -I
\end{pmatrix}
\quad \text{ and } \quad
\begin{pmatrix}
3Q^{-1} & 0\\
0 & -I-\frac{1}{3}C
\end{pmatrix}.
\]
Hence we have 
$$  1 = \indp(B) = \indp(\mathcal H)
   =\indp(3Q^{-1})+\indp(-I-C/3)
   =1+n_-(I+C/3).$$
Here $n_-$ denotes the negative index of a real symmetric form.
Consequently $C\succeq-3I$.  Moreover,
$\indp(C)\leq \indp(Q)=1$.

Set $Y=X^tQ$.  Then
\[
                         BQ=3I+XY,
                  \qquad YX=C.
\]
For every nonzero eigenvalue, the matrices $XY$ and $YX$ have the
same generalized eigenspaces and the same Jordan blocks.
Since $C$ is symmetric, its spectrum is real, lies in $[-3,\infty)$,
and contains at most one positive eigenvalue.  It follows that the
eigenvalues of $BQ$ are nonnegative, and at most one exceeds $3$.
Finally, an eigenvalue zero of $BQ$ corresponds to the nonzero
eigenvalue $-3$ of the symmetric matrix $C$, so its Jordan blocks have
size one.
\end{proof}

\begin{lemma}\label{lem:ternary}
Every even integral symmetric matrix of signature $(1,2)$ and determinant at
most $54$ primitively represents one of $2,4,6,12$.
\end{lemma}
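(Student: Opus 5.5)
Write $K$ for the even matrix and $f(v)=\tfrac12 v^tKv$ for the associated integral quadratic form. It suffices to show that $f$ primitively represents one of $1,2,3,6$. Since $\det K>0$ and the signature is $(1,2)$, the form $f$ is indefinite, and $\det K=8\det(\text{matrix of }f\text{ on }\Z^3)\le54$. Here the half-integral Gram matrix of $f$ has determinant $\det K/8\le 27/4$. The plan is a reduction argument parallel to the binary case of Lemma~\ref{lem:binary-tensor}. Take the least positive integer $m$ represented by $f$. Any vector realising this minimum is primitive, for otherwise dividing it would give a smaller positive value. Complete that vector to a basis $v_1,v_2,v_3$. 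Restricting $K$ to the orthogonal complement $v_1^\perp$, which is negative definite, turns the problem into an estimate of $m$ in terms of $\det K$ together with the determinant of a binary negative definite form.

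The concrete steps are as follows. First, write
\[
K=\begin{pmatrix}2m&u^t\\ u&K'\end{pmatrix},
\]
where $u\in\Z^2$ and $K'$ is even of signature $(0,2)$ or degenerate. Second, use the identity
\[
2m\,K'-uu^t=\text{adjusted Gram matrix of }v_1^\perp\text{ in }\Z v_1\oplus\Lambda,
\]
whose determinant is $2m\det K$; this is negative definite. Third, reduce $u$ modulo $2m$ by changing $v_2,v_3$ by multiples of $v_1$, so that every entry of $u$ satisfies $|u_i|\le m$. Minimality of $m$ now bounds things from the other side: for each vector $w=v_1+\epsilon v_j$, and more generally for small combinations, any positive value of $f$ must be at least $m$. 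Combining this with the definiteness of the complement yields an inequality of Hermite type. The target is $m^3\lesssim c\cdot\det K$ with $c$ small, which gives $m\le 5$ or so.

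If $\det K\le54$ forces $m\le3$, we are done, because $1,2,3$ are all acceptable values. The residual cases $m=4$ and $m=5$ are handled as in the binary lemma. One uses the bounded list of reduced forms with $\det K\le54$ and exhibits explicitly a small vector on which $f$ takes one of the values $1,2,3,6$. In practice this means evaluating $f(v_1\pm v_j)$ and $f(v_1\pm v_2\pm v_3)$; each of these values is either a positive number below $m$, contradicting minimality, or equals $6$, or is one of $1,2,3$. Squarefreeness then ensures primitivity in the final representation, again as in Lemma~\ref{lem:binary-tensor}.

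The main obstacle is the Hermite-type step. For an indefinite ternary form, the minimum of the positive values is not controlled by the determinant alone. Such forms are isotropic, for instance, and can represent $0$ and every small value. So I would not rely on a crude reduction. Instead I would use that an indefinite ternary lattice with small determinant is close to unique in its genus. As a fallback, I would use the binary sublattices: choose a primitive isotropic or small negative vector $w$, let the rank-two sublattice spanned by $w$ and a vector of positive norm have signature $(1,1)$ and bounded discriminant, and apply the binary reduction from the proof of Lemma~\ref{lem:binary-tensor} to it. Keeping that sublattice's discriminant below $28$ would make the binary argument applicable. Securing this bound, via the classification of reduced ternary forms of determinant at most $54$ (a finite check), is where I expect the real work to lie.
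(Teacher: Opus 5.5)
There is a genuine gap: the step that carries the proof is never supplied.

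\textbf{The bound on the minimum.} Your plan rests on a ``Hermite-type'' bound for the least positive value of the form, and you give no argument for it. The Schur-complement identity $\det(2mK'-uu^t)=2m\det K$ is correct. But you never show how it combines with minimality to give $m^3\lesssim c\det K$.

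You then argue against the step on false grounds. An indefinite ternary form need not be isotropic: $\operatorname{diag}(6,-2,-2)$ is even, of signature $(1,2)$ and determinant $24$, and is anisotropic over $\mathbf Q$ (reduce $3x^2=y^2+z^2$ modulo $3$). In the paper, isotropy is \emph{derived} from the counter-hypothesis. Moreover, the least positive value \emph{is} controlled by the determinant, but only through a device your outline lacks.

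Write $m$, as in the paper, for the least positive value of $(v,v)=v^tKv$; this is twice your $m$. The paper takes a minimal vector $h$ and the positive definite majorant $H_h[x]=2(h,x)^2/m-(x,x)$. It agrees with $Q$ on $\R h$ and with $-Q$ on $h^\perp$, so $\det H_h=\det Q$. Since $\det Q\le54<6\pi^2$, Minkowski's theorem gives $x\neq0$ with $H_h[x]<6$. Reducing $x$ modulo $h$ gives $a=|(h,x)|\le m/2$. Minimality then forces $(x,x)=-2s\le0$ and $(h+x)^2\le0$, i.e.\ $m\le 2(a+s)$ with $2a^2/m+2s<6$. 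These inequalities already force $m<12$.

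\textbf{The residual cases.} Even granting $m/2\le5$, your treatment of $m/2\in\{4,5\}$ is unsupported. In your basis only $u$ has been reduced, and nothing constrains $K'$. So the values of $f$ at $v_1\pm v_j$ and $v_1\pm v_2\pm v_3$ may all be nonpositive and carry no information. There is no reason a fixed finite set of small combinations must hit $\{1,2,3,6\}$.

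In the paper, these cases are exactly those in which the inequalities above are tight, and they force $(h+x)^2=0$. Dividing by the content gives a primitive isotropic vector $u$ and the normal form
\[
 \begin{pmatrix}0&d&0\\ d&2A&b\\ 0&b&-2k\end{pmatrix},\qquad \det Q=2kd^2\le54,
\]
so $d\le5$, with $k=1$ when $d\ge4$. A residue argument on $\tfrac12(nu+v+zw,\,nu+v+zw)=dn+A+bz-kz^2$, with $n$ unbounded, then produces a primitive vector of half-square $1$, $2$ or $3$.

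\textbf{The fallbacks.} Your fallback ideas (near-uniqueness in the genus, a binary sublattice of discriminant below $28$, a finite classification of reduced forms) are not carried out. Each would still need an argument of this kind.
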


\begin{proof}
Suppose, to the contrary, that there exists an integral symmetric matrix \(Q\) of signature \((1,2)\) and determinant at most \(54\) that represents none of \(2,4,6,12\). For any vectors \(v,w\), we write
$$
(v,w):=v^tQw.
$$
  
Let $m:=\min\{(v,v)>0\;\vert\; v\in \Z^3\}$ and let $h$ be an integral vector such that $m = (h,h)$.
Such a vector exists because the positive cone contains a rational,
hence after scaling an integral, vector.  
Hence $h$ is primitive and by assumption, we have
\begin{equation}\label{eq:m-possibilities}
                         m=8,\ 10,\quad\text{or}\quad m\geq14.
\end{equation}

Define the positive definite quadratic form
\[
                  H_h[x]=\frac{2(h,x)^2}{m}-(x,x).
\]
In the orthogonal decomposition
$\Lambda_\R=\R h\oplus h^\perp$, this form agrees with $Q$ on
$\R h$ and with $-Q$ on $h^\perp$.  The corresponding involution has
determinant one, so
\[
                              \det H_h=\det Q.
\]
The ellipsoid $H_h[x]\leq t$ has volume
\[
                    \frac{4\pi}{3}\frac{t^{3/2}}{\sqrt{\det Q}}.
\]
Because $\det Q\leq54<6\pi^2$, choose
\[
     \left(\frac{6}{\pi}\right)^{2/3}(\det Q)^{1/3}<t<6.
\]
Minkowski's convex-body theorem gives a nonzero integral vector $x$ with
$H_h[x]\leq t<6$; see \cite[Chapter~III, \S2]{Cassels}.

Replace $x$ by $x-nh$, choosing $n$ nearest to $(h,x)/m$.  In the
orthogonal decomposition used above, this does not increase $H_h$ and
ensures
\begin{equation}\label{am_inequality}
    a=|(h,x)|\leq m/2.
\end{equation}

The new vector is nonzero, since $H_h[nh] = n^2m\geq 8$ by \eqref{eq:m-possibilities} if $n$ is non-zero but $H_h[x]<6$. Note that  $(x,x)\leq 0$: otherwise
minimality gives $(x,x)\geq m$, while
\[
                  H_h[x]\leq m/2-(x,x)<0,
\]
contrary to positive definiteness.  Write
$$
                              (x,x)=-2s.
$$
Since $H_h[x]<6$, we have $s\in\{0,1,2\}$.  Choose the sign of $x$ so
that $(h,x)=-a$.  The case $a=s=0$ is impossible because $Q\vert_{h^\perp}$ is
negative definite.  Also $h+x\neq0$, and
\[
                         (h+x)^2=m-2a-2s<m.
\]
If this square were positive, it would contradict the minimality of
$m$.  Hence
\begin{equation}\label{eq:minimality-inequalities}
                  m\leq2(a+s),
          \qquad \frac{2a^2}{m}+2s<6.
\end{equation}
Together with \eqref{eq:m-possibilities} and \eqref{am_inequality}, these inequalities leave
exactly
\[
\begin{array}{c|c}
 s&(m,a)\\ \hline
 0&(8,4),(10,5),\\
 1&(8,3),(10,4),\\
 2&(8,2),(10,3).
\end{array}
\]
In every case $(h+x)^2=0$.  Dividing $h+x$ by its integral content,
we obtain a primitive isotropic vector $u$.

Let
\[
                   d=\operatorname{div}(u)
                     =\gcd\{(u,y):y\in\Lambda\}.
\]
The map $y\mapsto(u,y)/d$ from $\Z^3$ to $\Z$ is primitive.
Its kernel is saturated, and $u$ extends to a basis $u,w$ of that
kernel.  Choose $v$ mapping to one.  In the basis $u,v,w$, the Gram
matrix has the form
\begin{equation}\label{eq:isotropic-normal-form}
 \begin{pmatrix}
  0&d&0\\
  d&2A&b\\
  0&b&-2k
 \end{pmatrix},
 \qquad k\geq1.
\end{equation}
Indeed, $Q$ restricted on $u^\perp/\R u$ is negative definite, which gives the last
entry.  Taking determinants in \eqref{eq:isotropic-normal-form} gives
\[
                         \det Q=2kd^2\leq54,
\]
so $d\leq5$.  

Now consider the primitive vector $nu+v+zw$ and its half-square
\begin{equation}\label{eq:half-square-residue}
 \frac{(nu+v+zw, nu+v+zw)}{2}=dn+A+bz-kz^2.
\end{equation}
If $d\leq3$, the residues of $1$, $2$, and $3$ cover every residue
class modulo $d$.  Fixing $z=0$ and choosing $n$ in
\eqref{eq:half-square-residue} therefore gives one of these three
values.  If $d=4$ or $5$, the determinant bound forces $k=1$.
Modulo $4$, the polynomial $A+bz-z^2$ cannot vanish identically: its
values at $z=0$ and $z=1$ would force $A\equiv0$ and $b\equiv1$, but
then its value at $z=2$ is $2$.  Hence it assumes one of the residues
$1,2,3$.  Modulo $5$, completing the square shows that
$A+bz-z^2$ has exactly three values; it cannot have image contained in
the two-element set $\{0,4\}$, so again it assumes one of $1,2,3$.
Choosing $n$ in \eqref{eq:half-square-residue} produces a vector of
half-square $1$, $2$, or $3$.  This contradiction proves the lemma.
\end{proof}

\begin{proposition}\label{prop:rank-three-arithmetic}
Let $\Lambda = \Z^3$, let $Q$ be an even integral symmetric matrix of signature $(1,2)$ on $\Lambda$, and let
$B$ be an integral symmetric matrix.  If
\begin{equation}\label{eq:arithmetic-hypotheses}
 \tr(BQ)=12,\qquad \indp(B)=1,\qquad 3Q-QBQ\preceq0,
\end{equation}
then $Q$ primitively represents one of $2,4,6,12$.
\end{proposition}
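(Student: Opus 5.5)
The plan is to reduce to Lemma~\ref{lem:ternary} by showing $\det Q\le 54$, using the spectral information of Lemma~\ref{lem:spectral}. First translate the hypotheses. Since $Q$ has signature $(1,2)$, it is invertible, and
\[
3Q-QBQ=Q(3Q^{-1}-B)Q .
\]
So $3Q-QBQ\preceq0$ is equivalent to $B-3Q^{-1}\succeq0$. Together with $\indp(B)=1$, this is exactly the hypothesis of Lemma~\ref{lem:spectral}. We conclude that the eigenvalues $\lambda_1,\lambda_2,\lambda_3$ of $BQ$ are real and nonnegative, at most one exceeds $3$, and $\lambda_1+\lambda_2+\lambda_3=\tr(BQ)=12$.

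Next bound $\det Q$ through $\det(BQ)=\det B\cdot\det Q$. Signature $(1,2)$ gives $\det Q>0$. Congruence of $QBQ$ with $B$ and $\indp(B)=1$ show that $\det B\ge0$, with $\det B>0$ exactly when $B$ is nondegenerate of signature $(1,2)$.

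\emph{Case $\det B\neq0$.} Then $\det B$ is a positive integer, so $\det Q\le \det Q\det B=\lambda_1\lambda_2\lambda_3$. Order the eigenvalues so that $\lambda_2,\lambda_3\le3$ and $\lambda_1\le12$. We must maximize $\lambda_1\lambda_2\lambda_3$ under $\sum\lambda_i=12$ and $0\le\lambda_2,\lambda_3\le3$. If $\lambda_2+\lambda_3=s\le6$, the product is at most $(12-s)s^2/4$, which is increasing on $[0,6]$. Its maximum is therefore $6\cdot 9=54$, attained at $(6,3,3)$. Hence $\det Q\le54$, and Lemma~\ref{lem:ternary} finishes this case.

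\emph{Case $\det B=0$.} Zero is an eigenvalue of $BQ$. The positive eigenvalues are then at most two and sum to $12$, with at most one exceeding $3$. If there are two positive eigenvalues, they are $\lambda>3$ and $\mu\le3$. This forces $\lambda\ge9$, and the restriction of $B$ has rank two. If only one is positive, $B$ has rank one. Handling this case is the main obstacle, since the determinant argument gives nothing here.

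For rank one, write $B=nvv^t$ with $v$ primitive. Then $n\,(v^tQv)=12$. As in the proof of Lemma~\ref{lem:binary-tensor}, negative semidefiniteness of $3Q-QBQ$ on vectors $w$ with $v^tQw=0$ forces $n>0$, since the orthogonal complement of a negative vector contains positive vectors. So $v^tQv$ is a positive even divisor of $12$ primitively represented, and we are done.

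For rank two, let $\Lambda_0=\ker(BQ)$. This is a saturated rank-one sublattice $\Z u$, and $B$ factors through the rank-two quotient. Because $3Q-QBQ\preceq0$ and $QBQu=0$, we get $3u^tQu\le0$. If $u^tQu<0$, then $u^{\perp}$ has signature $(1,1)$. I would then restrict everything to $K:=Q|_{u^\perp}$, or dually to the quotient lattice, on which $B$ induces $C$ with $\tr(CK)=12$ and $3K-KCK\preceq0$, and try to check $\det(K+KCK)\le0$ from the eigenvalues: $(1+\lambda)(1+\mu)>0$ has the wrong sign, so this needs care. A cleaner route is to feed the sublattice directly into the rank-two argument of Lemma~\ref{lem:binary-tensor}: there $\varepsilon=\lambda\mu\le 27$ holds automatically, while $\varepsilon\ge-13$ is trivial since $\varepsilon\ge0$. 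That proof used only the range \eqref{eq:epsilon-range}, the trace condition, and the semidefiniteness, so applying it to $K$ yields a primitive vector of $u^\perp$ of square $2,4,6$ or $12$. Such a vector is also primitive in $\Z^3$ because $u^\perp$ is saturated. The isotropic subcase $u^tQu=0$ would be treated similarly using the normal form \eqref{eq:isotropic-normal-form}, and I expect it to require the most bookkeeping.
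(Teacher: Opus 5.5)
\textbf{Verdict: same route as the paper, with one unproved subcase.} Your architecture is the paper's. You split by $\rk B$. When $B$ is nonsingular you bound $\det Q\le 54$ via $\det(BQ)=\det B\det Q$ and apply Lemma~\ref{lem:ternary}. In rank one you read off $v^tQv\mid 12$. In rank two you restrict to the saturated lattice $u^\perp$ and invoke Lemma~\ref{lem:binary-tensor}. Your $u^\perp$ is the paper's $M=\sat(\operatorname{im}B)$, since $u^tQ(BQw)=w^tQ(BQu)=0$.

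The rank-three and rank-one parts are fine. The gap is in rank two. You assert without proof that when $\det B=0$, ``if only one is positive, $B$ has rank one.'' You also leave the isotropic subcase $u^tQu=0$ open, to be treated ``similarly using the normal form.'' Both points rest on the one conclusion of Lemma~\ref{lem:spectral} that you dropped when you summarized it: the eigenvalue $0$ of $BQ$ is semisimple.

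\textbf{How semisimplicity closes the gap.} Suppose $\rk B=2$ and $u^tQu=0$. Then $u\in u^\perp=\operatorname{im}(BQ)\otimes\mathbf Q$. So $BQ$ kills $u$, preserves $u^\perp$, and induces zero on the quotient $(\Lambda\otimes\mathbf Q)/u^\perp$. Its spectrum is therefore $(12,0,0)$, while $\ker(BQ)$ is one-dimensional. That is a nontrivial Jordan block at $0$, which semisimplicity forbids. Hence the isotropic subcase is vacuous and needs no bookkeeping. Without this input, your rank-two dichotomy is unjustified and the argument is incomplete. The paper handles it the same way: semisimplicity makes $AK$ nonsingular, so $K=Q|_M$ is nondegenerate of signature $(1,s-1)$.

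\textbf{A smaller slip.} In the non-isotropic case you worry that $(1+\lambda)(1+\mu)>0$ ``has the wrong sign'' for $\det(K+KCK)\le0$. It does not. $K$ has signature $(1,1)$, so $\det K<0$ and
$\det(K+KCK)=\det K\,(1+\lambda)(1+\mu)<0$.
Together with $\tr(CK)=12$ and the principal-block inequality $3K-KCK\preceq0$, this means the \emph{statement} of Lemma~\ref{lem:binary-tensor} applies directly to $(K,C)$. Your detour into its proof, checking $0\le\varepsilon=\lambda\mu\le27$, is valid but unnecessary.
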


\begin{proof}
The last inequality in \eqref{eq:arithmetic-hypotheses} is equivalent
to
\[
                              B-3Q^{-1}\succeq0,
\]
because
\[
                  3Q-QBQ=-Q(B-3Q^{-1})Q.
\]
Apply Lemma~\ref{lem:spectral} to $T=BQ$.  Its eigenvalues are
nonnegative, at most one exceeds $3$, zero is semisimple, and their
sum is $12$.

Let
\[
 M=\sat\bigl(\operatorname{im}(B:\Lambda^*\to\Lambda)\bigr),
          \qquad s=\rk B.
\]
The trace identity gives $s\geq1$.
Since $M$ is saturated, a basis of $M$ extends to a basis of $\Lambda$.
In such a basis, symmetry of $B$ gives block matrices
\begin{equation}\label{eq:block-forms}
 B=\begin{pmatrix}A&0\\0&0\end{pmatrix},
 \qquad
 Q=\begin{pmatrix}K&R\\R^t&S\end{pmatrix},
 \qquad
 BQ=\begin{pmatrix}AK&AR\\0&0\end{pmatrix}.
\end{equation}
Because $Q$ is invertible, $\rk(BQ)=s$, so the geometric multiplicity
of zero is $3-s$.  Semisimplicity makes its algebraic multiplicity
also $3-s$.  Thus $AK$ is nonsingular, and all its eigenvalues are
positive.

The nonsingular matrix $A$ has signature $(1,s-1)$, so
$\operatorname{sign}(\det A)=(-1)^{s-1}$.  Positivity of the eigenvalues
of $AK$ gives $\det(AK)>0$, hence
$\operatorname{sign}(\det K)=(-1)^{s-1}$.  The restriction $K$ of the
form $Q$ has positive index at most one.  Since it is nondegenerate,
the determinant sign forces $K$ to have signature $(1,s-1)$.

If $s=1$, write $A=(c)$.  Then $c>0$, and
\[
                         12=\tr(AK)=cK_{11}.
\]
The primitive generator of the saturated rank-one lattice $M$ has
positive even self-pairing $K_{11}\in\{2,4,6,12\}$.

If $s=2$, the two eigenvalues of $AK$ are positive.  Therefore
\[
 \det(K+KAK)=\det K\det(I+AK)<0.
\]
The upper-left principal block of $3Q-QBQ\preceq0$ is
$3K-KAK\preceq0$, and $\tr(AK)=12$.  Lemma~\ref{lem:binary-tensor}, applied to $(K,A)$, gives a primitive vector of $M$ whose self-pairing is one of the desired values. It remains primitive in $\Lambda$ because $M$ is saturated.

Finally, suppose $s=3$.  All three eigenvalues of $BQ$ are positive.
Choose the two that are at most three and denote them by $x,y$.  The
third is $12-x-y$, and
\[
                         \det(BQ)=xy(12-x-y).
\]
On the square $0\leq x,y\leq3$, this function is increasing in each
variable, since
\[
 \frac{\partial}{\partial x}\bigl(xy(12-x-y)\bigr)
    =y(12-2x-y)\geq3y,
\]
and similarly in $y$.  Hence
\[
                             \det(BQ)\leq54.
\]
Both $B$ and $Q$ have signature $(1,2)$, so $\det B$ and $\det Q$ are
positive integers.  Since $\det(BQ)=\det B\det Q$, we obtain
\[
                             0<\det Q\leq54.
\]
Lemma~\ref{lem:ternary} completes the proof.
\end{proof}

\end{document}